  \newcommand\@dotsep{4}
  \def\@tocline#1#2#3#4#5#6#7{\relax
     \ifnum #1>\c@tocdepth % then omit
     \else
     \par \addpenalty\@secpenalty\addvspace{#2}%
     \begingroup \hyphenpenalty\@M
     \@ifempty{#4}{%
     \@tempdima\csname r@tocindent\number#1\endcsname\relax
        }{%
         \@tempdima#4\relax
           }%
      \parindent\z@ \leftskip#3\relax \advance\leftskip\@tempdima\relax
      \rightskip\@pnumwidth plus1em \parfillskip-\@pnumwidth
       #5\leavevmode\hskip-\@tempdima #6\relax
       \leaders\hbox{$\m@th
       \mkern \@dotsep mu\hbox{.}\mkern \@dotsep mu$}\hfill
       \hbox to\@pnumwidth{\@tocpagenum{#7}}\par
       \nobreak
        \endgroup
         \fi}
\begin{document}
\parskip5pt

\makeatletter
\@addtoreset{figure}{section}
\def\thefigure{\thesection.\@arabic\c@figure}
\def\fps@figure{h,t}
\@addtoreset{table}{bsection}

\def\thetable{\thesection.\@arabic\c@table}
\def\fps@table{h, t}
\@addtoreset{equation}{section}
\def\theequation{%\thesection.
\arabic{equation}}
\makeatother

\newcommand{\bfi}{\bfseries\itshape}

\newtheorem{theorem}{Theorem}
\newtheorem{acknowledgment}[theorem]{Acknowledgment}
\newtheorem{corollary}[theorem]{Corollary}
\newtheorem{lemma}[theorem]{Lemma}
\newtheorem{notation}[theorem]{Notation}

\newtheorem{problem}[theorem]{Problem}
\newtheorem{proposition}[theorem]{Proposition}
\newtheorem{remark}[theorem]{Remark}

\newtheorem{setting}[theorem]{Setting}
\newtheorem{hypothesis}[theorem]{Hypothesis}
\newtheorem{conjecture}[theorem]{Conjecture}
\theoremstyle{definition}
\newtheorem{definition}[theorem]{Definition}
\newtheorem{example}[theorem]{Example}

\numberwithin{theorem}{section}
\numberwithin{equation}{section}

\renewcommand{\1}{{\bf 1}}
\newcommand{\Ad}{{\rm Ad}}
\newcommand{\Alg}{{\rm Alg}\,}
\newcommand{\Aut}{{\rm Aut}\,}
\newcommand{\ad}{{\rm ad}}
\newcommand{\Borel}{{\rm Borel}}
\newcommand{\botimes}{\bar{\otimes}}
\newcommand{\Comm}{{\rm Comm}}
\newcommand{\Cpol}{{\mathcal C}^\infty_{\rm pol}}
\newcommand{\card}{{\rm card}\,}
\newcommand{\Der}{{\rm Der}\,}
\newcommand{\Diff}{{\rm Diff}\,}
\newcommand{\de}{{\rm d}}
\newcommand{\ee}{{\rm e}}
\newcommand{\End}{{\rm End}\,}
\newcommand{\ev}{{\rm ev}}
\newcommand{\hotimes}{\widehat{\otimes}}
\newcommand{\id}{{\rm id}}
\newcommand{\ie}{{\rm i}}
\newcommand{\iotaR}{\iota^{\rm R}}
\newcommand{\GL}{{\rm GL}}
\newcommand{\gl}{{{\mathfrak g}{\mathfrak l}}}
\newcommand{\Hom}{{\rm Hom}\,}
\newcommand{\Img}{{\rm Im}\,}
\newcommand{\Ind}{{\rm Ind}}
\newcommand{\ind}{{\rm ind}\,}
\newcommand{\Ker}{{\rm Ker}\,}
\newcommand{\Lie}{\text{\bf L}}
\newcommand{\Mt}{{{\mathcal M}_{\text t}}}
\newcommand{\m}{\text{\bf m}}
\newcommand{\pr}{{\rm pr}}
\newcommand{\Ran}{{\rm Ran}\,}
\renewcommand{\Re}{{\rm Re}\,}
\newcommand{\so}{\text{so}}
\newcommand{\spa}{{\rm span}\,}
\newcommand{\Tr}{{\rm Tr}\,}
\newcommand{\tw}{\ast_{\rm tw}}
\newcommand{\Op}{{\rm Op}}
\newcommand{\U}{{\rm U}}
\newcommand{\UCb}{{{\mathcal U}{\mathcal C}_b}}
\newcommand{\weak}{\text{weak}}

\newcommand{\CC}{{\mathbb C}}
\newcommand{\HH}{{\mathbb H}}
\newcommand{\RR}{{\mathbb R}}
\newcommand{\TT}{{\mathbb T}}
\newcommand{\NN}{{\mathbb N}}

\newcommand{\Ac}{{\mathcal A}}
\newcommand{\Bc}{{\mathcal B}}
\newcommand{\Cc}{{ C}}
\newcommand{\Dc}{{\mathcal D}}
\newcommand{\Ec}{{\mathcal E}}
\newcommand{\Fc}{{\mathcal F}}
\newcommand{\Hc}{{\mathcal H}}
\newcommand{\Ic}{{\mathcal I}}
\newcommand{\Jc}{{\mathcal J}}
\newcommand{\Kc}{{\mathcal K}}
\newcommand{\Lc}{{\mathcal L}}
\renewcommand{\Mc}{{\mathcal M}}
\newcommand{\Nc}{{\mathcal N}}
\newcommand{\Oc}{{\mathcal O}}
\newcommand{\Pc}{{\mathcal P}}
\newcommand{\Qc}{{\mathcal Q}}
\newcommand{\Sc}{{\mathcal S}}
\newcommand{\Tc}{{\mathcal T}}
\newcommand{\Vc}{{\mathcal V}}
\newcommand{\Uc}{{\mathcal U}}
\newcommand{\Xc}{{\mathcal X}}
\newcommand{\Yc}{{\mathcal Y}}
\newcommand{\Wig}{{\mathcal W}}

\newcommand{\Bg}{{\mathfrak B}}
\newcommand{\Fg}{{\mathfrak F}}
\newcommand{\Gg}{{\mathfrak G}}
\newcommand{\Ig}{{\mathfrak I}}
\newcommand{\Jg}{{\mathfrak J}}
\newcommand{\Lg}{{\mathfrak L}}
\newcommand{\Pg}{{\mathfrak P}}
\newcommand{\Sg}{{\mathfrak S}}
\newcommand{\Xg}{{\mathfrak X}}
\newcommand{\Yg}{{\mathfrak Y}}
\newcommand{\Zg}{{\mathfrak Z}}

\newcommand{\ag}{{\mathfrak a}}
\newcommand{\bg}{{\mathfrak b}}
\newcommand{\dg}{{\mathfrak d}}
\renewcommand{\gg}{{\mathfrak g}}
\newcommand{\hg}{{\mathfrak h}}
\newcommand{\kg}{{\mathfrak k}}
\newcommand{\mg}{{\mathfrak m}}
\newcommand{\n}{{\mathfrak n}}
\newcommand{\og}{{\mathfrak o}}
\newcommand{\pg}{{\mathfrak p}}
\newcommand{\sg}{{\mathfrak s}}
\newcommand{\tg}{{\mathfrak t}}
\newcommand{\ug}{{\mathfrak u}}
\newcommand{\zg}{{\mathfrak z}}

\renewcommand{\H}{{\mathcal H}}

\newcommand{\LC}{{\rm LC}}
\newcommand{\Subquot}{{\rm SQ}}
\newcommand{\Hausd}{{\rm H}}

%%%%%%%%%%%%%%%%%%%%%%%%%%%%%%%%%%%%%%%%%%%%%%%%%%%%%%%%%%%%%%%%%%%%%%%%

\def\no#1{\Vert #1\Vert }

\def\wh#1{\widehat{#1}}

\def\res#1{\vert_{ #1}}

\newcommand{\hake}[1]{\langle #1 \rangle }

\newcommand{\scalar}[2]{(#1 \mid#2) }
\newcommand{\dual}[2]{\langle #1, #2\rangle}

\newcommand{\norm}[1]{\Vert #1 \Vert }
\newcommand{\opn}[1]{\operatorname{#1}}

\makeatletter
\title[Fourier transforms of $C^*$-algebras of nilpotent Lie groups]{Fourier transforms of $C^*$-algebras\\ of nilpotent Lie groups}
\author{Ingrid Belti\c t\u a}  
\author{Daniel Belti\c t\u a}
\author{Jean Ludwig}
\address{Institute of Mathematics ``Simion Stoilow'' 
of the Romanian Academy,   
P.O. Box 1-764, Bucharest, Romania}
\email{ingrid.beltita@gmail.com, Ingrid.Beltita@imar.ro}

\address{Institute of Mathematics ``Simion Stoilow'' 
of the Romanian Academy,   
P.O. Box 1-764, Bucharest, Romania}
\email{beltita@gmail.com, Daniel.Beltita@imar.ro}

\address{Universit\'e de Lorraine, Institut Elie Cartan de Lorraine, UMR 7502, Metz, F-57045, France}
\email{jean.ludwig@univ-lorraine.fr}
\keywords{nilpotent Lie group; operator field; solvable $C^*$-algebra; norm controlled dual limits}
\subjclass[2000]{Primary 43A30; Secondary 22E27, 22E25, 46L35}
\thanks{This research has been partially supported by  the Grant
of the Romanian National Authority for Scientific Research, CNCS-UEFISCDI,
project number PN-II-ID-PCE-2011-3-0131. }
%\date{May 24, 2015}
\makeatother

\begin{abstract}
For any nilpotent Lie group $G$ we provide a description of the image of its $C^*$-algebra 
through its operator-valued Fourier transform. 
Specifically, 
we show that $C^*(G)$ admits a finite composition series such that 
that the spectra of the corresponding quotients 
are Hausdorff sets in the relative topology, 
defined in terms of the fine stratification of the space of coadjoint orbits of $G$,  and  the canonical fields of 
elementary 
$C^*$-algebras defined by the successive subquotients are trivial. 
We give a description of the image of the Fourier 
transform  as a $C^*$-algebra of piecewise continuous  operator fields on the spectrum,  
determined by the boundary behavior 
of the restrictions of operator fields to the spectra of the subquotients in the composition series. 
For uncountable families of 3-step nilpotent Lie groups 
and also for a sequence of nilpotent Lie groups of arbitrarily high nilpotency step, we prove that every continuous trace subquotient of their $C^*$-algebras has its Dixmier-Douady invariant equal to zero.
\end{abstract}

\maketitle

%\tableofcontents

\section{Introduction}

The classical Fourier transform gives a $*$-isomorphism $C^*(\Vc)\simeq\Cc_0(\Vc^*)$ for any finite-dimensional real vector space $\Vc$ 
regarded as an abelian Lie group. 
Some of the main results of this paper give a result of this type 
when the abelian group $(\Vc,+)$ is replaced by 
an arbitrary connected, simply connected nilpotent Lie group~$G$. 
(See Theorem~\ref{nilpoare} and Corollary~\ref{disb}.)  
If $G$ is non-commutative, then its unitary dual space $\widehat{G}$ is not Hausdorff and moreover $C^*(G)$ is non-commutative, 
hence one must replace the classical Fourier transform by 
a suitable operator-valued Fourier transform which realizes $C^*(G)$ as a $C^*$-algebra of operator fields on $\widehat{G}$. 

The problem of describing the image of the operator-valued Fourier transform on a nilpotent Lie group is notoriously difficult 
and some of its aspects were nicely discussed in \cite{LiRo96}. 

In the present paper we develop a new approach to operator-valued Fourier transforms of $C^*$-algebras of nilpotent Lie groups, 
blending complete positivity techniques and existence of suitable global  canonical 
symplectic coordinates on coadjoint orbits of nilpotent Lie groups.
Instead of the traditional way of constructing Lie group representations as induced representations, 
we use the Lie theoretic method of constructing firstly the representations of the corresponding Lie algebras 
via canonical coordinates on coadjoint orbits \cite{Pe89}, 
and then we integrate the Lie algebra representations to Lie group representations. 
This  leads to a realization of the operator-valued Fourier transform, 
which is studied with the help of some basic properties of $C^*$-algebra extensions and of completely positive maps.  

Specifically, we prove that for every connected, simply connected nilpotent Lie group $G$ its $C^*$-algebra admits 
a finite sequence of closed two-sided ideals 
\begin{equation}\label{introd_eq1}
\{0\}=\Jc_0\subseteq\Jc_1\subseteq\cdots\subseteq\Jc_n=C^*(G)
\end{equation} 
with $*$-isomorphisms $\Jc_j/\Jc_{j-1}\simeq\Cc_0(\Gamma_j,\Kc(\Hc_j))$ 
for suitable locally compact spaces $\Gamma_j$ that are homeomorphic to real semi-algebraic cones, 
where $\Gamma_1$ is a Zariski open subset of $\RR^k$, denoting by $k$ the
codimension of generic coadjoint orbits of $G$. 
Here $\Hc_j$ for $j=1,\dots,n$ are  complex separable Hilbert spaces 
with $\dim\Hc_1=\cdots=\dim\Hc_{n-1}=\infty$,  and $\dim\Hc_n=1$. 
Using a direct sum of topological spaces, one obtains a continuous bijection 
\begin{equation}\label{bijmap}
\Gamma_1\sqcup\cdots\sqcup\Gamma_n\to\widehat{G}.
\end{equation}
whose restriction to $\Gamma_j$ is a homeomorphism onto its image for $j=1,\dots,n$. 
The above map is never a homeomorphism if $G$ is non-abelian, 
since for instance it turns out that $\Gamma_j$ is a relatively dense open subset of $\Gamma_j\cup\Gamma_{j+1}\cup\cdots\cup\Gamma_n$ 
for $j=1,\dots,n-1$ (see Theorem~\ref{nilpsolv} and Definition~\ref{solvspecl_def} below). 
We describe the image of the operator-valued Fourier transform 
of $C^*(G)$ as a $C^*$-algebra of operator fields on $\widehat{G}$, 
determined via the boundary behavior 
of the restrictions of operator fields to $\Gamma_j$ for $j=1,\dots,n$ 
(see Theorem~\ref{nilpoare} and Corollary~\ref{disb} below). 
The intricate topological nature of the $C^*$-algebra extensions involved in this picture can already be seen 
in the case of the Heisenberg group; see \cite{Vo81}. 

For nilpotent Lie groups of step two and dimension $\le 6$ (and a solvable Lie group), 
composition series like those in \eqref{introd_eq1} were constructed in 
\cite[Ch.~6]{Ec96} using techniques of twisted crossed products.
The results thus obtained are optimal in terms  
of the length of the composition sequence, for the case of the free 
two-step nilpotent Lie group with tree generators, see \cite[Ex.~6.3.5]{Ec96}.

In the particular cases of the Heisenberg, threadlike Lie groups, 
and  all nilpotent Lie  groups of dimension $\le 6$, 
explicit calculations have been used to provide descriptions  
of the image of the operator-valued Fourier in   \cite{LuTu11}, \cite{LuRe15} and \cite{ReLu13}.

Our results can also be regarded as a sharpening of the results of \cite{Pe84}, 
where one proved that there exists a composition series as in \eqref{introd_eq1}, 
where the successive quotients are however $C^*$-algebras with continuous trace.

The present paper is structured as follows. 
Section~\ref{section2} contains basic notations and results 
on solvable $C^*$-algebras and norm-continuous operator fields. 
Section~\ref{section3} contains abstract results
on liminary $C^*$-algebras that have a finite composition series 
such that the spectra of the corresponding quotients 
are Hausdorff sets in the relative topology, and  the canonical field of elementary 
$C^*$-algebras defined by the successive subquotients are trivial. 
In particular, we give a description of the image of the Fourier 
transform  for these algebras as  $C^*$-algebras of piecewise continuous  operator fields on the spectrum,  
determined by the boundary behavior 
of the restrictions of operator fields to the spectra of the successive subquotients in the composition series 
(see Theorem~\ref{newcor} below).  
Section~\ref{section4} shows that the results in the previous section can be applied in the case of general connected simply connected nilpotent Lie groups.
 
As an illustration of our abstract results, we show in Section~\ref{section5} that the Heisenberg group 
is uniquely determined in terms of the above structures. 

One of the open problems in this area is the conjecture that every continuous-trace subquotient of $C^*(G)$, where $G$ is an exponential Lie group, has its Dixmier-Douady invariant equal to zero \cite{RaRo88, Ro94, LiRo96}. 
This was proved so far only for 2-step nilpotent Lie groups \cite[Th. 3.4]{LiRo96}; see also the concluding remarks in \cite{Ec96}. 
Using different ideas we will also prove it here for uncountable families of 3-step nilpotent Lie groups 
and also for a sequence of nilpotent Lie groups of arbitrarily high nilpotency step 
(Corollaries \ref{SQ8} and \ref{SQ12}).

\section{Preliminaries}\label{section2}

In this section, beside recalling some  basic notations and results 
on solvable $C^*$-algebras, we record some tools for proving norm-continuity 
of operator fields.

\subsection{Basic notations}\label{basic}
\begin{notation}
\normalfont
We denote by $\Kc(\Hc)$ the $C^*$-algebra of all compact operators 
on some complex separable infinite-dimensional Hilbert space~$\Hc$. 
We denote by $\Sg_p(\Hc)$ the $p$th Schatten ideal, for $1\le p\le\infty$.    
For any $C^*$-algebra $\Ac$ and locally compact Hausdorff space $X$ 
we denote by $\Cc_0(X,\Ac)$ the $C^*$-algebra of continuous $\Ac$-valued functions on~$X$ 
which vanish at infinity. 
We also  denote  by $\widehat{\Ac}$ the dual space of $\Ac$ endowed with its Jacobson topology. 
\qed
\end{notation}

The next lemma is well known, but we record it in order to fix some notation. 

\begin{lemma}\label{L1}
Let $\Ac$ be any $C^*$-algebra. 
For any closed two-sided ideal $\Jc$ of $\Ac$ 
denote 
$$\begin{aligned}
\widehat{\Ac}_{\Jc}&:=\{[\pi]\in\widehat{\Ac}\mid \Jc\subseteq\Ker\pi\}\simeq\widehat{\Ac/\Jc}, \\
\widehat{\Ac}^{\Jc}&:=\{[\pi]\in\widehat{\Ac}\mid \Jc\not\subset\Ker\pi\}\simeq\widehat{\Jc}. 
\end{aligned} $$
Then the following assertions hold: 
\begin{enumerate}
\item $\Jc\mapsto \widehat{\Jc}$ is an increasing bijection between the 
closed two-sided ideals of $\Ac$ and the open subsets of $\widehat{\Ac}$. 
\item $\Jc\mapsto \widehat{\Ac/\Jc}$ is a decreasing bijection between the 
closed two-sided ideals of $\Ac$ and the closed subsets of $\widehat{\Ac}$. 
%\item The maps $\Jc\mapsto \widehat{\Jc}$ and $\Jc\mapsto \widehat{\Ac/\Jc}$ 
%give rise to bijective correspondences between the following three sets: 
%\begin{enumerate} 
%\item the $C^*$-algebra extensions $0\to\Jc\to\Ac\to\Ac/\Jc\to0$;  
%\item the open subsets of $\widehat{\Ac}$; 
%\item the closed subsets of $\widehat{\Ac}$. 
%\end{enumerate}
\end{enumerate}
\end{lemma}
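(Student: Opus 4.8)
The plan is to reduce the whole statement to the classical hull--kernel correspondence on the primitive ideal space and then transport it along the canonical map $\widehat{\Ac}\to\operatorname{Prim}\Ac$. Accordingly, I would first set up notation: write $\operatorname{Prim}\Ac$ for the space of primitive ideals with its Jacobson topology, and recall that $q\colon\widehat{\Ac}\to\operatorname{Prim}\Ac$, $[\pi]\mapsto\Ker\pi$, is a continuous open surjection and that the topology of $\widehat{\Ac}$ coincides with the initial topology with respect to $q$. Consequently $U\mapsto q^{-1}(U)$ is an inclusion-preserving bijection from the open (resp.\ closed) subsets of $\operatorname{Prim}\Ac$ onto the open (resp.\ closed) subsets of $\widehat{\Ac}$. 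For a closed two-sided ideal $\Jc$ set $h(\Jc):=\{P\in\operatorname{Prim}\Ac\mid\Jc\subseteq P\}$ and for $F\subseteq\operatorname{Prim}\Ac$ set $k(F):=\bigcap_{P\in F}P$ (with $k(\emptyset):=\Ac$); directly from the definitions one has $q^{-1}(h(\Jc))=\widehat{\Ac}_{\Jc}$, and hence $q^{-1}(\operatorname{Prim}\Ac\setminus h(\Jc))=\widehat{\Ac}^{\Jc}$.

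The core step is to show that $h$ and $k$ are mutually inverse, inclusion-reversing bijections between the closed two-sided ideals of $\Ac$ and the closed subsets of $\operatorname{Prim}\Ac$. That the closed subsets of $\operatorname{Prim}\Ac$ are exactly the sets $h(\Ic)$, and that $h(k(F))=\overline{F}$, is built into the definition of the Jacobson topology; what really remains is the identity $k(h(\Jc))=\Jc$. For this I would observe that the quotient map induces a homeomorphism of $\operatorname{Prim}(\Ac/\Jc)$ onto $h(\Jc)$, so that $k(h(\Jc))$ is the preimage in $\Ac$ of $\bigcap\operatorname{Prim}(\Ac/\Jc)$, and the latter intersection is $\{0\}$ because $\Ac/\Jc$ is a $C^*$-algebra (Gelfand--Naimark: a $C^*$-algebra has enough irreducible representations, equivalently its Jacobson radical vanishes). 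Applying $q^{-1}$ and passing to complements then yields assertions (1) and (2).

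It then remains to produce the homeomorphisms $\widehat{\Ac}_{\Jc}\simeq\widehat{\Ac/\Jc}$ and $\widehat{\Ac}^{\Jc}\simeq\widehat{\Jc}$ recorded in the statement. Here I would invoke the standard facts that inflation along $\Ac\to\Ac/\Jc$ identifies $\widehat{\Ac/\Jc}$ with $\{[\pi]\in\widehat{\Ac}\mid\Jc\subseteq\Ker\pi\}$, and that restriction to $\Jc$ identifies $\{[\pi]\in\widehat{\Ac}\mid\Jc\not\subseteq\Ker\pi\}$ with $\widehat{\Jc}$; the non-formal input is that $\pi|_{\Jc}$ is irreducible whenever $\pi$ is irreducible on $\Ac$ with $\pi(\Jc)\neq\{0\}$ (a nonzero $\Jc$-invariant closed subspace is automatically $\Ac$-invariant, since $\Jc$ is an ideal acting nondegenerately), together with the uniqueness of the extension of an irreducible representation of $\Jc$ to $\Ac$. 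That both identifications respect the Jacobson topologies follows once more by comparing primitive ideal spaces.

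The main point throughout --- and the only place where the $C^*$-algebra structure enters in an essential rather than purely formal way --- is the identity $k(h(\Jc))=\Jc$, i.e.\ that every closed two-sided ideal of a $C^*$-algebra is an intersection of primitive ideals; everything else is the hull--kernel bookkeeping, which is why the lemma can be quoted as standard.
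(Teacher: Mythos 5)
Your argument is correct, and it is essentially the standard hull--kernel argument that the paper outsources entirely to Dixmier (the paper's ``proof'' is just the citation of Props.\ 2.11.2 and 3.2.2 of \cite{Dix64}, which are proved along exactly the lines you describe: the bijection between closed ideals and closed subsets of $\operatorname{Prim}\Ac$ via $k(h(\Jc))=\Jc$, pulled back along $[\pi]\mapsto\Ker\pi$, together with the inflation/restriction identifications of $\widehat{\Ac/\Jc}$ and $\widehat{\Jc}$). The only slight shorthand is your parenthetical that a nonzero closed $\Jc$-invariant subspace is automatically $\Ac$-invariant --- strictly one passes through $\overline{\pi(\Jc)K}\subseteq K$ and nondegeneracy of $\pi|_{\Jc}$ --- but this is the standard fact and does not affect correctness.
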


\begin{proof}
See \cite[Props. 2.11.2, 3.2.2]{Dix64}.
%for the first two assertions 
%and for the bijections 
%$\widehat{\Ac}_{\Jc}\simeq\widehat{\Ac/\Jc}$, $[\pi]\mapsto[\pi/\Jc]$, and 
% $\widehat{\Ac}^{\Jc}\simeq\widehat{\Jc}$, $[\pi]\mapsto[\pi\vert_{\Jc}]$.
\end{proof} 
%The third assertion is a consequence of the other ones. 
%\end{proof}

\begin{remark}\label{S3}
\normalfont 
In the present aper, a key role is played by the well-known fact that for any $C^*$-algebra $\Ac$ 
the locally closed subsets of $\widehat{\Ac}$ 
are precisely the spectra of subquotients of $\Ac$. 

In fact, recall that locally closed subset means any 
set of the form $F\cap D$, where $F$ is any closed subset of $\widehat{\Ac}$ 
while $D$ is any open subset of $\widehat{\Ac}$. 
Moreover, a subquotient of $\Ac$ is any $C^*$-algebra of the form $\Jc_2/\Jc_1$, 
where $\Jc_1\subseteq\Jc_2$ are any closed two-sided ideals of $\Ac$. 
For any such a pair of ideals, it follows by Lemma~\ref{L1} that 
$D:=\widehat{\Jc_1}$ is an open subset of $\widehat{\Ac}$ 
and $\widehat{\Jc_2/\Jc_1}$ is a closed subset of the open set $\widehat{\Jc_1}$, 
hence it is easily checked that the disjoint union $F:=\widehat{\Jc_2/\Jc_1}\cup(\widehat{\Ac}\setminus\widehat{\Jc_1})$ is 
a closed subset of $\widehat{\Ac}$ and $F\cap D=\widehat{\Jc_2/\Jc_1}$ is locally closed. 
Conversely, for any locally closed subset $F\cap D\subseteq \widehat{\Ac}$ we have 
$F\cap D=D\setminus(D\setminus F)$, 
where $D\setminus F=D\cap(\widehat{\Ac}\setminus F)$ is an open subset of $D$. 
Hence by Lemma~\ref{L1} there exist uniquely determined two-sided closed ideals $\Jc_1\subseteq\Jc_2$ 
of $\Ac$ with $\widehat{\Jc_1}=D\setminus F$ and $\widehat{\Jc_2}=D$. 
Moreover, since $\Jc_1$ is in particular an ideal of $\Jc_2$, 
it follows by Lemma~\ref{L1} again that $\widehat{\Jc_2/\Jc_1}=D\setminus(D\setminus F)=D\cap F$. 
See \cite[Lemma~7.3.5]{Ph87} for the fact that the $*$-isomorphism class of the $C^*$-algebra $\Jc_2/\Jc_1$
depends only on the locally closed set $\widehat{\Jc}_2\setminus \widehat{\Jc}_1$. \qed
\end{remark}

\begin{definition}\label{separate}
Let $X$ be a topological space. 

i) A  point $\gamma\in X$ is said to be \textit{separated  in} $X$ 
if for every $\gamma'\in X$ that is not in the closure of the set $\{\gamma\}$, 
there exist open subsets $V$, $V'\subset X$ such that $\gamma\in V$, $\gamma'\in V'$ and $V\cap V' =\emptyset$.
 
ii)  We denote by $\opn{Sep}(X)$ the collection of subsets $Y\subset X$ such that 
all points in $Y$ are closed and separated in $X$.
\end{definition}

\subsection{Special solvable $C^*$-algebras}

We first recall a notion introduced in \cite{Dy78} 
(see also \cite[Sect. 2]{HY88} for several completions). 

\begin{definition}\label{solv_def}
\normalfont
A $C^*$-algebra $\Ac$ is called \emph{solvable} if it has a \emph{solving series}, 
that is, a finite sequence of ideals 
$\{0\}=\Jc_0\subseteq\Jc_1\subseteq\cdots\subseteq\Jc_n=\Ac$ 
with $*$-isomorphisms $\Jc_j/\Jc_{j-1}\simeq\Cc_0(\Gamma_j,\Kc(\Hc_j))$ 
for suitable locally compact spaces $\Gamma_j$ and complex Hilbert spaces $\Hc_j$ for $j=1,\dots,n$, 
with $\dim\Hc_1\ge\cdots\ge\dim\Hc_n$. 
If $n\ge1$ is the least integer for which there exists a sequence of ideals as above, 
then $n-1$ is called the \emph{length} of the $C^*$-algebra $\Ac$.
\end{definition} 

\begin{definition}\label{R-space}
\normalfont
A \emph{topological $\RR$-space} is a topological space $X$ endowed with a 
continuous map $\RR\times X\to X$, $(t,x)\mapsto t\cdot x$, and with a distinguished point $x_0\in X$ 
satisfying the following conditions: 
\begin{enumerate}
\item For every $x\in X$ and $t\in \RR$ one has $0\cdot x= t \cdot x_0$. 
\item For all $t,s\in\RR$ and $x\in X$ one has $t\cdot(s\cdot x)=ts\cdot x$. 
\item For every $x\in X\setminus \{x_0\}$ the map $\RR\to X$, $t\mapsto t\cdot x$ is a homeomorphism onto its image. 
\end{enumerate}
An \emph{$\RR$-subspace} of the topological $\RR$-space $X$ is any subset $\Gamma\subseteq X$ 
such that $\RR\cdot \Gamma\subseteq\Gamma\cup\{x_0\}$. 
If this is the case, then $\Gamma\cup\{x_0\}$ is a topological $\RR$-space on its own. 

In the above framework, a function $\varphi\colon X\to\RR$ is called \emph{homogeneous} if there exists $r\in[0,\infty)$ 
such that $\varphi(t\cdot x)=t^r\varphi(x)$ for all $t\in\RR$ and $x\in X$. 
\end{definition} 

\begin{example}
\normalfont
Every finite-dimensional real vector space is a topological $\RR$-space. 
Moreover, if $\varphi_1,\dots,\varphi_{n_1},\psi_1,\dots,\psi_{n_2}\colon\RR^m\to\RR$ 
are any homogeneous polynomials, then the semi-algebraic cone 
$$\Gamma:=\{x\in\RR^n\mid\varphi_{j_1}(x)=0\ne\psi_{j_2}(x)\text{ for }1\le j_1\le n_1\text{ and }1\le j_2\le n_2\}$$
is an $\RR$-subspace of $\RR^m$ in the sense of Definition~\ref{R-space}. 

As another type of examples, if $G$ is any nilpotent Lie group, then the topological $\RR$-space structure of $\gg^*$ 
gives rise to a topological $\RR$-space structure of the orbit space $\gg^*/G$ (hence also of the dual space $\widehat{G}$ 
via Kirillov's correspondence), and the vector space of characters $[\gg,\gg]^\perp$, 
viewed as the set of singleton orbits, is an $\RR$-subspace of $\gg^*/G$. 
More specifically, the $\RR$-space structure of $\gg^*/G$ is the map 
$$\RR\times (\gg^*/G)\to\gg^*/G, \quad (t,\Oc_{\xi})\mapsto\Oc_{t\xi}$$
where we denote by $\Oc_\xi$ the coadjoint orbit of every $\xi\in\gg^*$. 
\end{example}

\begin{lemma}\label{dom}
Let $X$ be any topological space and for $j=1,2$ let $V_j$ be any open subset of $X$ 
that is homeomorphic to an open subset of $\RR^{r_j}$, where $r_j\ge1$ is some integer. 
If $V_1\cap V_2\ne\emptyset$, then $r_1=r_2$. 
\end{lemma}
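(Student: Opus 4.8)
The plan is to reduce the statement to Brouwer's invariance of domain. First I would pass to the overlap $W:=V_1\cap V_2$, which is nonempty by hypothesis and open in $X$, hence open in each of $V_1$ and $V_2$. Since a homeomorphism carries open sets onto open sets and each $V_j$ is homeomorphic to an open subset of $\RR^{r_j}$, the set $W$ is then homeomorphic to a nonempty open subset $W_j\subseteq\RR^{r_j}$ for $j=1,2$. Composing these two homeomorphisms, $W_1\cong W\cong W_2$, yields a homeomorphism $h\colon W_1\to W_2$ between a nonempty open subset of $\RR^{r_1}$ and a nonempty open subset of $\RR^{r_2}$, so it is enough to prove that the existence of such an $h$ forces $r_1=r_2$.

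To do this I would argue by contradiction, assuming without loss of generality that $r_1<r_2$. Identifying $\RR^{r_1}$ with the linear subspace $\RR^{r_1}\times\{0\}\subseteq\RR^{r_2}$ and writing $\iota$ for the inclusion, the composite $\iota\circ h^{-1}\colon W_2\to\RR^{r_2}$ is a continuous injective map on the open set $W_2\subseteq\RR^{r_2}$; by invariance of domain its image, namely $W_1$ regarded inside $\RR^{r_2}$, must be open in $\RR^{r_2}$. But $W_1$ is a nonempty subset of $\RR^{r_1}\times\{0\}$, which has empty interior in $\RR^{r_2}$ since $r_1<r_2$, a contradiction. Hence $r_1=r_2$.

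Since the whole argument rests on the classical invariance of domain theorem, there is no essential obstacle; the only point requiring a little care is the reduction above, i.e. checking that the displayed identifications are genuine homeomorphisms onto open subsets of Euclidean space. If one prefers to bypass the case distinction, one can instead observe that for a point $p\in W$ the local homology groups $H_k(W,W\setminus\{p\})$ are intrinsic to $W$; but under the homeomorphism $W\cong W_j$ and excision they are isomorphic to $H_k(\RR^{r_j},\RR^{r_j}\setminus\{q_j\})$, which is $\mathbb{Z}$ for $k=r_j$ and $0$ otherwise, so comparing $j=1$ and $j=2$ gives $r_1=r_2$ at once.
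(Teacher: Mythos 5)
Your proposal is correct and follows the same route as the paper: pass to the nonempty open overlap $V_1\cap V_2$, which is homeomorphic to open subsets of both $\RR^{r_1}$ and $\RR^{r_2}$, and conclude $r_1=r_2$ from Brouwer's invariance of domain. You merely spell out the standard deduction of dimension invariance from invariance of domain (and offer a local homology variant), which the paper leaves implicit.
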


\begin{proof}
%Since $V_1$ is dense in $X$ and $V_2$ is open, it follows that $V:=V_1\cap V_2\ne\emptyset$. 
The nonempty open set $V:=V_1\cap V_2$ is homeomorphic to some open subsets of $\RR^{r_1}$ and of $\RR^{r_2}$, 
hence $r_1=r_2$ by Brouwer's theorem on the invariance of domain. 
\end{proof}

\begin{definition}\label{solvspecl_def}
\normalfont
We say that $\Ac$ is a \emph{special solvable $C^*$-algebra} if 
it is separable and 
it has a \emph{special solving series}, 
that is, a solving series as in Definition~\ref{solv_def} with the following additional properties: 
\begin{enumerate}
\item\label{solvspecl_def_item1} 
$\widehat{\Ac}$ has the structure of a topological $\RR$-space 
and $\Gamma_j\subseteq\widehat{\Ac}$ is an $\RR$-subspace for $j=1,\dots,n$. 
\item\label{solvspecl_def_item2} 
One has $\dim\Hc_n=1$ and $\Gamma_n$ is isomorphic as a topological $\RR$-space to a finite-dimensional vector space. 
\item\label{solvspecl_def_item3} 
For $j=1,\dots,n-1$ one has $\dim\Hc_j=\infty$, the set $\Gamma_{j+1}$ is dense in $\widehat{\Ac}\setminus\widehat{\Jc}_{j}$, 
and the points of $\Gamma_{j+1}$ are closed and separated in $\widehat{\Ac}\setminus\widehat{\Jc}_{j}$.  
\item For $j=1,\dots,n$, $\Gamma_j$ is isomorphic as a topological $\RR$-space to a semi-algebraic cone 
in a finite-dimensional vector space. 
In addition, $\Gamma_1$ is assumed to be a Zariski open set,  
and the dimension of the corresponding ambient vector space is called the \emph{index of $\Ac$} and is denoted by $\ind\Ac$. 
\item For $j=1,\dots,n$, there exists a homogeneous function $\varphi_j\colon\widehat{\Ac}\to\RR$  
such that $\varphi_j\vert_{\Gamma_1}$ is a polynomial function (via the above homeomorphism) 
and 
$$\Gamma_j=\{\gamma\in\widehat{\Ac}\mid \varphi_j(\gamma)\ne0\text{ and }\varphi_i(\gamma)=0\text{ if }i<j\}.$$ 
\end{enumerate}
\end{definition}

Since $\Gamma_1$ is open and dense in $\widehat{\Ac}$, 
it follows by Lemma~\ref{dom} that $\ind\Ac$ does not depend on the choice of the solving series of~$\Ac$. 

\begin{remark}
\normalfont
In Definition~\ref{solvspecl_def}, the distinguished point of the topological $\RR$-space $\widehat{\Ac}$ 
is the origin $0\in\Gamma_n$ of the vector space~$\Gamma_n$. 
Therefore, for $j=1,\dots,n$ and every $\gamma\in\Gamma_j$, one has 
$\overline{\RR^*\cdot \gamma}\cap \RR \cdot \gamma =\{0\}\subset \Gamma_n$. 
\qed
\end{remark}

\begin{remark}
\normalfont
In Definition~\ref{solvspecl_def}, 
since $\dim\Hc_n=1<\dim\Hc_j$ for $1\le j<  n$, 
the set $\Gamma_n$ is precisely the set of characters of $\Ac$, 
that is, the non-zero $*$-homomorphisms $\chi\colon\Ac\to\CC$. 
Denoting by $\Comm(\Ac)$ the closed two-sided ideal of $\Ac$ generated by the set $\{ab-ba\mid a,b\in\Ac\}$, 
it follows that $\Ac/\Comm(\Ac)$ is a commutative $C^*$-algebra and for every $\chi\in\Gamma_n$ 
one has $\Comm(\Ac)\subseteq\Ker\chi$, hence $\chi$ can be identified with an element in the spectrum of  
$\Ac/\Comm(\Ac)$. 
Thus the Gelfand representation provides a $*$-isomorphism 
$\Ac/\Comm(\Ac)\simeq \Cc(\Gamma_n)$, 
and the condition in Definition~\ref{solvspecl_def}\eqref{solvspecl_def_item2} 
implies that the spectrum of the commutative $C^*$-algebra $\Ac/\Comm(\Ac)$ 
is homeomorphic to a finite-dimensional real vector space, 
whose dimension is uniquely determined because of 
Brouwer's theorem on the invariance of domain. 
\qed
\end{remark}

\subsection{On the continuity of operator fields}

The following two lemmas go back to \cite[Prop. 2.2, Th. 2.3]{ReLu13}. 

\begin{lemma}\label{F1}
Let $\Ac$ be any $C^*$-algebra with some subset of its spectrum $\Gamma\subseteq\widehat{\Ac}$ 
such that the relative topology of $\Gamma$ is Hausdorff. 
Assume $\Hc$ is a complex Hilbert space and in every $\gamma\in\Gamma$ 
we have picked $\pi_\gamma\colon\Ac\to\Bc(\Hc)$. 
Also let $\Vc_1$ and $\Vc_2$ be any total subsets of $\Hc$. 

If $a\in\Ac$ has the property that 
for every $v_1\in\Vc_1$ and $v_2\in\Vc_2$, 
the function $\Gamma\to\CC$, $\gamma\mapsto\langle\pi_\gamma(a)v_1,v_2\rangle$, 
is continuous,  
then for every $R\in\Sg_1(\Hc)$ the function 
$$f_R\colon\Gamma\to\CC,\quad f_R(\gamma)=\Tr(\pi_\gamma(a)R)$$
is continuous and bounded. 
\end{lemma}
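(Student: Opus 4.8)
The plan is to deduce the continuity of $f_R$ from the continuity of the rank-one case and then use a density/uniform approximation argument. First I would observe that the hypothesis gives us, for $v_1 \in \Vc_1$ and $v_2 \in \Vc_2$, continuity of $\gamma \mapsto \langle \pi_\gamma(a) v_1, v_2\rangle$; by polarization and finite linear combinations this extends to continuity of $\gamma \mapsto \langle \pi_\gamma(a) v, w\rangle$ for $v, w$ in the dense linear spans of $\Vc_1, \Vc_2$. Since $\|\pi_\gamma(a)\| \le \|a\|$ uniformly in $\gamma$, a standard $\varepsilon/3$ estimate upgrades this to continuity of $\gamma \mapsto \langle \pi_\gamma(a) v, w\rangle$ for \emph{all} $v, w \in \Hc$: approximate $v, w$ by elements of the spans, and control the error using the uniform norm bound on $\pi_\gamma(a)$. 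In particular $f_R$ is continuous when $R$ is a rank-one operator $R = \langle \,\cdot\,, w\rangle v$, since then $\Tr(\pi_\gamma(a) R) = \langle \pi_\gamma(a) v, w\rangle$.

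Next I would extend from rank-one operators to all of $\Sg_1(\Hc)$ by approximation in trace norm. Given $R \in \Sg_1(\Hc)$, write it via its singular value decomposition (or simply use that finite-rank operators are dense in $\Sg_1(\Hc)$) as a limit $R = \lim_N R_N$ where each $R_N$ is finite-rank, hence a finite sum of rank-one operators, so that $f_{R_N}$ is continuous, and $\|R - R_N\|_{\Sg_1} \to 0$. Then for each $\gamma$,
\[
|f_R(\gamma) - f_{R_N}(\gamma)| = |\Tr(\pi_\gamma(a)(R - R_N))| \le \|\pi_\gamma(a)\| \, \|R - R_N\|_{\Sg_1} \le \|a\| \, \|R - R_N\|_{\Sg_1},
\]
so $f_{R_N} \to f_R$ uniformly on $\Gamma$. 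A uniform limit of continuous functions is continuous, so $f_R$ is continuous. Boundedness is immediate from $|f_R(\gamma)| = |\Tr(\pi_\gamma(a) R)| \le \|a\| \, \|R\|_{\Sg_1}$ for all $\gamma \in \Gamma$.

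The main point to be careful about is the passage from the total sets $\Vc_1, \Vc_2$ to all vectors of $\Hc$: one needs the uniform bound $\sup_{\gamma} \|\pi_\gamma(a)\| < \infty$, which holds because each $\pi_\gamma$ is a $*$-representation and hence norm-decreasing, so $\|\pi_\gamma(a)\| \le \|a\|$. With that in hand the $\varepsilon/3$ argument is routine. I do not expect the Hausdorffness of $\Gamma$ to play any essential role in this particular lemma; it is presumably recorded for uniformity with the surrounding development (and perhaps to guarantee that the pointwise-defined functions behave well in later applications). In short, there is no serious obstacle here — the only thing to check is that the two approximation steps (dense span of vectors, trace-norm-dense finite-rank operators) are compatible with the available uniform operator-norm bound, which they are.
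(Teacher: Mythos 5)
Your proof is correct and follows essentially the same route as the paper: boundedness via $|\Tr(\pi_\gamma(a)R)|\le\Vert a\Vert\,\Vert R\Vert_1$, and continuity by trace-norm approximation of $R$ by finite-rank operators together with the fact that uniform limits of continuous functions are continuous (the paper phrases this as saying that $\{R\in\Sg_1(\Hc)\mid f_R\in\Cc(\Gamma)\}$ is a closed linear subspace). Your intermediate $\varepsilon/3$ step extending weak continuity to all vectors of $\Hc$ is harmless but not needed, since the rank-one operators built directly from $\Vc_1\times\Vc_2$ already span a dense subspace of $\Sg_1(\Hc)$, which is how the paper proceeds.
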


\begin{proof}
For every $R\in\Sg_1(\Hc)$ and $\gamma\in\Gamma$ we have 
$\vert f_R(\gamma)\vert\le \Vert R\Vert_1\Vert a\Vert$, 
hence $f_R\in\ell^\infty(\Gamma)$ and $\Vert f_R\Vert_\infty\le \Vert R\Vert_1\Vert a\Vert$. 
Since the limit of any uniformly convergent sequence of continuous functions is in turn continuous, 
it then easily follows that the set 
$\{R\in\Sg_1(\Hc)\mid f_R\in\Cc(\Gamma)\}$ is a closed linear subspace of 
$\Sg_1(\Hc)$. 
As that closed linear subspace has a dense linear subspace consisting of rank-one operators by the hypothesis on~$a$, 
the conclusion follows. 
\end{proof}

\begin{lemma}\label{F2}
In the setting of Lemma~\ref{F1}, 
assume $\Sc$ is a dense $*$-subalgebra of $\Ac$ 
for which every element $a\in\Sc$ satisfies the condition of Lemma~\ref{F1}, 
and moreover for all $\gamma\in\Gamma$ we have $\pi_\gamma(a)\in\Sg_1(\Hc)$ and the function 
$\Gamma\to\CC$, $\gamma\mapsto\Tr\pi_\gamma(a)$ is continuous.   

Then for every $a\in\Ac$ the map 
$\Pi_a\colon \Gamma\to\Bc(\Hc)$, $\gamma\mapsto\pi_\gamma(a)$, is norm continuous. 
\end{lemma}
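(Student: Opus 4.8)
The plan is to upgrade the weak-$*$ (matrix-coefficient) continuity hypothesis on the dense $*$-subalgebra $\Sc$ to norm continuity on all of $\Ac$, using a density/approximation argument in which the trace-class hypothesis on $\Sc$ plays the role of a quantitative uniform bound. First I would fix $a\in\Ac$ and $\varepsilon>0$, and choose $b\in\Sc$ with $\norm{a-b}<\varepsilon$. Then for every $\gamma\in\Gamma$ one has $\norm{\pi_\gamma(a)-\pi_\gamma(b)}<\varepsilon$, so it suffices to show that $\gamma\mapsto\pi_\gamma(b)$ is norm continuous for $b\in\Sc$; indeed, once $\gamma\mapsto\pi_\gamma(b)$ is norm continuous, the uniform estimate $\norm{\Pi_a(\gamma)-\Pi_b(\gamma)}<\varepsilon$ for all $\gamma$ shows $\Pi_a$ is a uniform limit of norm-continuous maps, hence norm continuous. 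So the problem reduces to: for $b\in\Sc$, $\gamma\mapsto\pi_\gamma(b)$ is norm continuous.

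Next I would handle $b\in\Sc$. Since $\Sc$ is a $*$-subalgebra, $b^*b\in\Sc$, and by hypothesis $\pi_\gamma(b^*b)\in\Sg_1(\Hc)$ with $\gamma\mapsto\Tr\pi_\gamma(b^*b)$ continuous; also by Lemma~\ref{F1} (applicable because $b^*b\in\Sc$ satisfies the matrix-coefficient condition) the functions $\gamma\mapsto\Tr(\pi_\gamma(b^*b)R)$ are continuous for every $R\in\Sg_1(\Hc)$, i.e. $\gamma\mapsto\pi_\gamma(b^*b)$ is continuous for the weak-$*$ topology on $\Sg_1(\Hc)=\Kc(\Hc)^*$. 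The key point is the standard fact that on the positive cone of $\Sg_1(\Hc)$, weak-$*$ convergence together with convergence of the traces ($\Sg_1$-norms) implies $\Sg_1$-norm convergence — this is the trace-class analogue of the Radon–Riesz property, and it follows because for positive trace-class operators $T_\gamma\to T$ weak-$*$ with $\Tr T_\gamma\to\Tr T$, testing against finite-rank projections gives convergence of the relevant partial sums of eigenvalues and one upgrades to $\Sg_1$-norm convergence by a standard diagonal/equi-integrability argument. Hence $\gamma\mapsto\pi_\gamma(b^*b)$ is continuous from $\Gamma$ into $\Sg_1(\Hc)$ with its norm, a fortiori into $\Bc(\Hc)$ with the operator norm.

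Finally I would pass from $b^*b$ to $b$. Fix $\gamma_0\in\Gamma$ and a net (or, since everything in sight is separable and metrizable on the relevant Hausdorff piece $\Gamma$, a sequence) $\gamma\to\gamma_0$ in $\Gamma$. From $\norm{\pi_\gamma(b^*b)-\pi_{\gamma_0}(b^*b)}\to 0$ and continuity of the square root on the positive operators one gets $\norm{\,|\pi_\gamma(b)|-|\pi_{\gamma_0}(b)|\,}\to 0$, where $|x|:=(x^*x)^{1/2}$. To control $\pi_\gamma(b)$ itself and not just its modulus, I would also use the matrix-coefficient hypothesis on $b$ (from the setting of Lemma~\ref{F1}), which gives $\pi_\gamma(b)\to\pi_{\gamma_0}(b)$ in the weak operator topology; combined with $\norm{\pi_\gamma(b)}=\norm{\,|\pi_\gamma(b)|\,}\to\norm{\,|\pi_{\gamma_0}(b)|\,}=\norm{\pi_{\gamma_0}(b)}$ and, more precisely, the $\Sg_1$-norm convergence of the moduli, one deduces norm convergence $\pi_\gamma(b)\to\pi_{\gamma_0}(b)$: write $\pi_\gamma(b)=u_\gamma|\pi_\gamma(b)|$ via polar decomposition and use that the moduli converge in $\Sg_2$-norm while the partial isometries, after passing to a subnet, converge strongly on the (asymptotically constant) range, which suffices to conclude operator-norm convergence of the products. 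The main obstacle, and the step requiring the most care, is precisely this last passage from norm-control of $|\pi_\gamma(b)|$ plus weak convergence of $\pi_\gamma(b)$ to genuine norm convergence of $\pi_\gamma(b)$ — the polar-decomposition parts need not converge in norm, so one must exploit that the convergence of the moduli is in a Schatten norm (not merely operator norm) to absorb the bad behavior of the partial isometries, as in \cite[Th. 2.3]{ReLu13}.
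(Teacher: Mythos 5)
Your reduction to $a\in\Sc$ is correct (it is the same closed-subspace/density argument as in the paper), and your second step is also essentially sound: the Grümm--Simon type theorem that for \emph{positive} trace-class operators WOT convergence together with convergence of the traces implies $\Sg_1$-norm convergence is a true, standard result (though your one-line sketch of it is not a proof), and it does give norm continuity of $\gamma\mapsto\pi_\gamma(b^*b)$. The genuine gap is in the final passage from $b^*b$ to $b$. The claim that "the partial isometries, after passing to a subnet, converge strongly on the (asymptotically constant) range" is unjustified: the closed unit ball of $\Bc(\Hc)$ is not compact in the strong operator topology, so a bounded net of partial isometries need not admit any strongly convergent subnet, and the initial spaces $\overline{\Ran\vert\pi_\gamma(b)\vert}$ are not constant. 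What you actually control is that $u_\gamma\vert\pi_{\gamma_0}(b)\vert\to\pi_{\gamma_0}(b)$ in the weak operator topology (since $u_\gamma(\vert\pi_\gamma(b)\vert-\vert\pi_{\gamma_0}(b)\vert)$ is norm-null), and multiplying a WOT-convergent bounded net by a fixed compact operator does not in general yield norm convergence (think of $u_\gamma-u$ acting like rank-one maps sending a fixed unit vector onto an orthonormal sequence); the Schatten-norm convergence of the moduli does not by itself "absorb" this, so the polar-decomposition step as written does not close. The implication you need there is nevertheless true and can be repaired without polar decompositions: from $\Vert\pi_\gamma(b)^*\pi_\gamma(b)-\pi_{\gamma_0}(b)^*\pi_{\gamma_0}(b)\Vert\to0$ and WOT convergence, the pointwise Radon--Riesz property of $\Hc$ gives strong convergence $\pi_\gamma(b)\to\pi_{\gamma_0}(b)$; then cut by a finite-rank spectral projection $P$ of $\pi_{\gamma_0}(b)^*\pi_{\gamma_0}(b)$, noting that $\Vert(\pi_\gamma(b)-\pi_{\gamma_0}(b))P\Vert\to0$ by finite rank, while $\Vert\pi_\gamma(b)(1-P)\Vert^2=\Vert(1-P)\pi_\gamma(b)^*\pi_\gamma(b)(1-P)\Vert$ is uniformly small.

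For comparison, the paper's own proof avoids all of this machinery by a direct Hilbert--Schmidt computation: for $a\in\Sc$ and a net $\gamma_j\to\gamma$,
$\Vert\pi_{\gamma_j}(a)-\pi_\gamma(a)\Vert^2\le\Vert\pi_{\gamma_j}(a)-\pi_\gamma(a)\Vert_2^2
=\Tr\pi_{\gamma_j}(a^*a)-2\Re\Tr\bigl(\pi_{\gamma_j}(a)\pi_\gamma(a)^*\bigr)+\Tr\pi_\gamma(a^*a)$,
and the right-hand side tends to $0$ by the trace hypothesis applied to $a^*a\in\Sc$ together with Lemma~\ref{F1} applied to the single trace-class operator $R=\pi_\gamma(a)^*$. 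This bypasses the Radon--Riesz/Grümm theorem, the Powers--Størmer inequality, and the polar decomposition entirely, which is exactly the delicate part where your argument breaks down.
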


\begin{proof}
For arbitrary $a\in\Ac$ and $\gamma\in\Gamma$ we have 
$\Vert\Pi_a(\gamma)\Vert=\Vert\pi_\gamma(a)\Vert\le\Vert a\Vert$,  
hence just as in the proof of Lemma~\ref{F1} we can see that 
the set $\{a\in\Ac\mid\Pi_a\text{ is continuous}\}$ 
is a closed linear subspace of~$\Ac$. 
Therefore it suffices to check that $\Pi_a$ is continuous for $a\in\Sc$. 

If $a\in\Sc$ and $\{\gamma_j\}_{j\in J}$ is any net in $\Gamma$ 
which is convergent to some $\gamma\in\Gamma$, then for all $j\in J$ we have 
$$\begin{aligned}
\Vert\Pi_a(\gamma_j)-\Pi_a(\gamma)\Vert^2
\le 
&\Vert\Pi_a(\gamma_j)-\Pi_a(\gamma)\Vert_2^2 \\
=&\Vert\pi_{\gamma_j}(a)-\pi_{\gamma}(a)\Vert_2^2 \\
=&\Tr\pi_{\gamma_j}(a^*a)-2\Re\Tr(\pi_{\gamma_j}(a)\pi_{\gamma}(a)^*)+\Tr\pi_{\gamma}(a^*a).
\end{aligned}$$
Since $a^*a\in\Sc$, it follows by hypothesis that 
$$\lim\limits_{j\in J}\Tr\pi_{\gamma_j}(a^*a)=\Tr\pi_{\gamma}(a^*a)$$ 
and on the other hand Lemma~\ref{F1} implies 
$$\lim\limits_{j\in J}\Tr(\pi_{\gamma_j}(a)\pi_{\gamma}(a)^*)
=\Tr(\pi_{\gamma}(a)\pi_{\gamma}(a)^*)=\Tr\pi_{\gamma}(a^*a)$$
hence by the above estimate we obtain 
$\lim\limits_{\gamma\in\Gamma}\pi_{\gamma_j}(a)=\pi_\gamma(a)$ in $\Bc(\Hc)$, 
which concludes the proof. 
\end{proof}

\begin{lemma}\label{S4}
Let $\Ac$ be any 
liminary separable 
$C^*$-algebra 
and consider any locally closed subset $\Gamma\subseteq\widehat{\Ac}$ 
for which the relative topology of $\Gamma$ is Hausdorff, 
and let $\Jc_1\subseteq\Jc_2$ be two-sided closed ideals of $\Ac$ 
with $\widehat{\Jc_2/\Jc_1}=\Gamma$ (see Remark~\ref{S3}).
Then the following properties are equivalent:   
\begin{enumerate}
\item\label{S4_item1} 
The canonical field of elementary $C^*$-algebras defined by $\Jc_2/\Jc_1$ 
on~$\Gamma$ is trivial.  
\item\label{S4_item2}
There exist a complex Hilbert space $\Hc$ and a complete system of distinct representatives 
$\{\pi_\gamma\colon\Ac\to\Bc(\Hc)\}_{\gamma\in\Gamma}$ 
of the equivalence classes of representations corresponding to the elements of $\Gamma$ 
such that for every $a\in\Jc_2$ the mapping $\Gamma\to\Bc(\Hc)$, $\gamma\mapsto\pi_\gamma(a)$ 
is continuous with respect to the norm operator topology of $\Bc(\Hc)$. 
\end{enumerate} 
\end{lemma}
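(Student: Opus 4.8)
The plan is to prove the equivalence by unpacking the definition of triviality of the canonical field of elementary $C^*$-algebras and relating it, via the classical structure theory of liminary $C^*$-algebras with Hausdorff spectrum, to the existence of a single Hilbert space in which all the irreducible representations indexed by $\Gamma$ can be simultaneously realized norm-continuously. First I would recall (e.g.\ from \cite{Dix64}, \cite{Fe61}) that if $\Bc:=\Jc_2/\Jc_1$ is a liminary $C^*$-algebra whose spectrum $\Gamma=\widehat{\Bc}$ is Hausdorff (and, since $\Ac$ is separable, second countable and locally compact), then $\Bc$ is a continuous-trace $C^*$-algebra, $\widehat{\Bc}=\Gamma$ is its primitive ideal space, and $\Bc$ is (isomorphic to) the $C^*$-algebra of $C_0$-sections of a canonical continuous field of elementary $C^*$-algebras $(\Kc(\Hc_\gamma))_{\gamma\in\Gamma}$ over $\Gamma$. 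The dimensions $\dim\Hc_\gamma$ are locally constant on $\Gamma$; I will first reduce to the case where this dimension is constant — either by treating the clopen pieces of $\Gamma$ separately, or by simply incorporating this into the statement as is implicitly done elsewhere in the paper — so that there is a fixed model Hilbert space $\Hc$ with $\Hc_\gamma\cong\Hc$ for all $\gamma$.

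With that setup, triviality of the canonical field means precisely that $\Bc\simeq\Cc_0(\Gamma,\Kc(\Hc))$ as $C^*$-algebras, equivalently that the continuous field $(\Kc(\Hc_\gamma))_\gamma$ admits a global trivialization, equivalently (by the Dixmier--Douady classification) that the Dixmier--Douady class $\delta(\Bc)\in H^3(\Gamma;\ZZ)$ vanishes together with the obstruction to a global choice of Hilbert-space frame. So for \eqref{S4_item1}$\Rightarrow$\eqref{S4_item2} I would fix a $*$-isomorphism $\Phi\colon\Bc\xrightarrow{\sim}\Cc_0(\Gamma,\Kc(\Hc))$; evaluation at $\gamma\in\Gamma$ gives surjections $\ev_\gamma\colon\Cc_0(\Gamma,\Kc(\Hc))\twoheadrightarrow\Kc(\Hc)$, and composing $\Ac\twoheadrightarrow\Ac/\Jc_1$ restricted to the ideal $\Jc_2/\Jc_1\simeq\Bc$ with $\ev_\gamma\circ\Phi$, then extending the resulting irreducible representation of the ideal $\Bc$ uniquely to all of $\Ac$ (using that an irreducible representation of a closed ideal extends uniquely to the whole algebra, \cite[2.10.4]{Dix64}), produces a family $\pi_\gamma\colon\Ac\to\Bc(\Hc)$. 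One checks that $[\pi_\gamma]$ runs exactly over $\Gamma\subseteq\widehat\Ac$ without repetition — this is because $\Phi$ is a homeomorphism on spectra and the ideal/extension correspondence is bijective on spectra (Lemma~\ref{L1}) — and that for $a\in\Jc_2$ the map $\gamma\mapsto\pi_\gamma(a)=\ev_\gamma(\Phi(\bar a))$ is norm-continuous, since an element of $\Cc_0(\Gamma,\Kc(\Hc))$ is by definition a norm-continuous $\Kc(\Hc)$-valued function and $\gamma\mapsto\ev_\gamma(\Phi(\bar a))$ is exactly that function evaluated at $\gamma$.

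For the converse \eqref{S4_item2}$\Rightarrow$\eqref{S4_item1}, given the family $\{\pi_\gamma\}_{\gamma\in\Gamma}$ on the fixed space $\Hc$ with $\gamma\mapsto\pi_\gamma(a)$ norm-continuous for all $a\in\Jc_2$, I would define a $*$-homomorphism $\Psi\colon\Jc_2\to\Cc_b(\Gamma,\Kc(\Hc))$ by $\Psi(a)(\gamma):=\pi_\gamma(a)$; this lands in $\Kc(\Hc)$-valued functions because each $\pi_\gamma$ restricted to the liminary algebra $\Jc_2$ has compact-operator range on the irreducible summand, it is norm-continuous by hypothesis, and it vanishes at infinity on $\Gamma$ because $\|\Psi(a)(\gamma)\|=\|\pi_\gamma(a)\|$ and, for $a$ in the ideal $\Jc_2/\Jc_1$-part, $\gamma\mapsto\|\pi_\gamma(a)\|$ is in $\Cc_0(\Gamma)$ (this is the standard fact that the norm function of an element of a $C^*$-algebra is upper semicontinuous and, on the continuous-trace part, continuous and vanishing at infinity). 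The kernel of $\Psi$ is exactly $\Jc_1$, since a representation $\pi_\gamma$ kills $a$ iff $a\in\Ker\pi_\gamma$ and $\bigcap_{\gamma\in\Gamma}\Ker\pi_\gamma=\Jc_1$ by the spectrum identification $\widehat{\Jc_2/\Jc_1}=\Gamma$. Thus $\Psi$ descends to an injective $*$-homomorphism $\Jc_2/\Jc_1\hookrightarrow\Cc_0(\Gamma,\Kc(\Hc))$, and the remaining point is surjectivity: both sides are $C_0$-sections of continuous fields of elementary $C^*$-algebras over $\Gamma$ with the same spectrum $\Gamma$ and the inclusion induces the identity on $\Gamma$, so by the local triviality of continuous-trace algebras and a partition-of-unity argument (or by \cite[Th. 10.5.4]{Dix64} / Fell's results on continuous fields) the inclusion is onto; hence $\Jc_2/\Jc_1\simeq\Cc_0(\Gamma,\Kc(\Hc))$, i.e.\ the canonical field is trivial.

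The main obstacle I anticipate is the surjectivity step in the converse direction — producing enough sections: from a pointwise family of representations one must verify that every continuous $\Kc(\Hc)$-valued function vanishing at infinity is hit, which is where one genuinely uses that $\Jc_2$ is liminary, that $\Gamma$ is Hausdorff (so that the continuous field structure is available), and the separability of $\Ac$ (to get second countability of $\Gamma$ and hence to run partition-of-unity arguments on the locally compact space $\Gamma$). A secondary technical point is the reduction to constant Hilbert-space dimension, which must be handled cleanly since $\Kc(\Hc)$ for infinite-dimensional $\Hc$ and for finite-dimensional $\Hc$ behave differently; as elsewhere in the paper, one works on the clopen strata of $\Gamma$ where $\dim\Hc_\gamma$ is constant, which is legitimate because a locally constant $\NN\cup\{\infty\}$-valued function on $\Gamma$ partitions it into clopen pieces.
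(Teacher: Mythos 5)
Your direction \eqref{S4_item1}$\Rightarrow$\eqref{S4_item2} is fine (the paper simply declares it clear), but in the converse direction your argument has a gap precisely at the step you yourself flag as the main obstacle: surjectivity of the map $\Psi\colon\Jc_2/\Jc_1\to\Cc_0(\Gamma,\Kc(\Hc))$. The justification you offer --- ``local triviality of continuous-trace algebras and a partition-of-unity argument'' --- is circular: you do not know at this stage that $\Jc_2/\Jc_1$ has continuous trace or that its canonical field is locally trivial; that is essentially the statement being proved (liminary with Hausdorff spectrum does not by itself give Fell's condition). Moreover, the partition-of-unity gluing $f\approx\sum_i\varphi_i b_i$ only stays inside the image of $\Psi$ if you already know that image is stable under pointwise multiplication by functions in $\Cc_0(\Gamma)$, which requires an extra input you never invoke (the Dauns--Hofmann theorem identifying the $\Cc_0(\Gamma)$-module structure, or alternatively a Stone--Weierstrass/rich-subalgebra theorem such as \cite[Th.\ 11.1.8]{Dix64}, which the paper does use elsewhere, in the proof of Theorem~\ref{Th2.11}). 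With one of these ingredients added, your route does close, and it even yields directly the isomorphism $\Jc_2/\Jc_1\simeq\Cc_0(\Gamma,\Kc(\Hc))$ that the paper extracts from the lemma later on; but as written the decisive step is unsupported.

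For comparison, the paper's proof of \eqref{S4_item2}$\Rightarrow$\eqref{S4_item1} avoids any surjectivity or gluing argument: it first replaces $\Ac$ by the subquotient $\Jc_2/\Jc_1$ (so $\Gamma=\widehat{\Ac}$), notes that the canonical field of elementary $C^*$-algebras is a continuous field because the spectrum is Hausdorff (\cite[10.5.1]{Dix64}), observes that by hypothesis the sections $\gamma\mapsto\pi_\gamma(a)$ are also continuous sections of the trivial field with fiber $\Kc(\Hc)$, and then invokes the uniqueness statement \cite[Prop.\ 10.2.4]{Dix64}: two continuous field structures admitting a common fiberwise-dense family of continuous sections are isomorphic, so the canonical field is isomorphic to the trivial one. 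Incidentally, your preliminary worry about reducing to constant fiber dimension is unnecessary: in \eqref{S4_item2} a single Hilbert space $\Hc$ is fixed in advance, and in \eqref{S4_item1} triviality already means isomorphism with a constant field, so no clopen decomposition by dimension is needed.
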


\begin{proof}
The implication \eqref{S4_item1}$\Rightarrow$\eqref{S4_item2} is clear, 
so we are left to proving only the converse implication. 

By Lemma~\ref{L1}, we may replace  $\Ac$ by its subquotient $\Jc_2/\Jc_1$.
Thus  
we may assume $\Jc_1=\{0\}$ and $\Jc_2=\Ac$, hence $\Gamma=\widehat{\Ac}$. 
Then the hypothesis~\eqref{S4_item2} shows that the continuous sections of 
the canonical field of elementary $C^*$-algebras defined by $\Ac$ 
(which is a continuous field of $C^*$-algebras by \cite[10.5.1]{Dix64} 
since $\Gamma=\widehat{\Ac}$ is Hausdorff)
are also continuous sections of the trivial field with the fiber $\Kc(\Hc)$ over $\widehat{\Ac}$. 
Now \cite[Prop. 10.2.4]{Dix64} ensures that the two aforementioned continuous fields of $C^*$-algebras 
are isomorphic, and in particular the canonical field of elementary $C^*$-algebras defined by $\Ac$ 
is trivial. 
\end{proof}

\section{Norm controls for the boundary values of $C^*$-Fourier transforms}\label{section3}

The present section proves our main abstract results on the image of the operator-valued Fourier transform 
of liminary $C^*$-algebras that have a finite composition series 
such that the spectra of the corresponding successive quotients 
are Hausdorff sets in the relative topology, and  the canonical fields of elenentary 
$C^*$-algebras defined by the subquotients are trivial.

\subsection{Boundary values of Fourier transforms}\label{subsect2.1}

\begin{proposition}\label{S1}
Assume the following: 
\begin{itemize}
\item $\Ac$ is any separable nuclear $C^*$-algebra.
\item $T\in \opn{Sep}(\widehat{\Ac})$  is an open dense subset of $\widehat{\Ac}$, 
such that its corresponding ideal of $\Ac$ is $*$-isomorphic to $\Cc_0(T,\Kc)$.
 Denote  by $\Fc_T\colon\Ac\to\Cc_b(T,\Kc)$ the Fourier transform of $\Ac$ restricted to $T$.
\item $0\to\Cc_0(T,\Kc)\hookrightarrow \Ac\mathop{\longrightarrow}\limits^q\Bc\to 0$ 
is an exact sequence of $C^*$-algebras. 
\end{itemize}
Then the following assertions hold: 
\begin{enumerate}
\item The map 
$$\Phi \colon\Ac\to \Cc_b(T,\Kc)\oplus \Bc,\quad \Phi (a)=\Fc_T(a)\oplus q(a)$$ 
is an isometric $*$-homomorphism. 
\item There exists a linear map $\nu\colon\Bc\to\Cc_b(T,\Kc)$ 
which is completely positive, completely isometric, almost $*$-homomorphism, 
and 
$$\Ran\Phi=\Bigl\{f\oplus b \in\Cc_b(T,\Kc)\oplus \Bc \mid \lim\limits_{t\to\infty}f(t)-(\nu(b))(t)=0\Bigr\}.$$
\item There exists a completely isometric cross section of $q$.  
\end{enumerate}
\end{proposition}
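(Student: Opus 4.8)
The plan is to treat the three assertions in sequence, noting that (1) is essentially formal, (2) is the heart of the matter and where the completely positive machinery enters, and (3) follows from (2) together with the Choi--Effros lifting theorem. Throughout I write $\Jc:=\Cc_0(T,\Kc)$ for the ideal sitting inside $\Ac$, so that $\Bc\simeq\Ac/\Jc$ via $q$.

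For assertion (1), I would first observe that $\Phi$ is a $*$-homomorphism because both $\Fc_T$ and $q$ are; it remains to show it is isometric. Since $\ker\Fc_T$ is the ideal corresponding to the open set $T$, and $T$ is \emph{dense} in $\widehat{\Ac}$, that ideal has empty interior in the dual and hence is $\{0\}$, so $\Fc_T$ is already faithful; alternatively, $\ker\Phi=\ker\Fc_T\cap\ker q=\Jc\cap\ker q$, and since $\Fc_T$ restricted to $\Jc=\Cc_0(T,\Kc)$ is (up to the standard identification) the identity, $\ker\Fc_T\cap\Jc=\{0\}$, while any element of $\ker\Phi$ also lies in $\ker q=\Jc$. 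Thus $\ker\Phi=\{0\}$ and a faithful $*$-homomorphism between $C^*$-algebras is isometric.

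For assertion (2), the key point is to produce the map $\nu\colon\Bc\to\Cc_b(T,\Kc)$ describing the boundary behaviour of lifts. The natural route is: choose, by the Choi--Effros lifting theorem (valid since $\Bc\simeq\Ac/\Jc$ is nuclear, being a quotient of the separable nuclear algebra $\Ac$), a unital (or, in the non-unital case, a contractive) completely positive section $s\colon\Bc\to\Ac$ of $q$; then set $\nu:=\Fc_T\circ s\colon\Bc\to\Cc_b(T,\Kc)$. This $\nu$ is completely positive as a composition of completely positive maps. To identify $\Ran\Phi$, note that for $a\in\Ac$ we have $q(a-s(q(a)))=0$, so $a-s(q(a))\in\Jc=\Cc_0(T,\Kc)$, whence $\Fc_T(a)-\nu(q(a))=\Fc_T(a-s(q(a)))\in\Cc_0(T,\Kc)$, i.e. $\lim_{t\to\infty}\bigl(\Fc_T(a)(t)-(\nu(q(a)))(t)\bigr)=0$; this gives the inclusion ``$\subseteq$''. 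Conversely, given $f\oplus b$ with $f-\nu(b)$ vanishing at infinity, the element $a:=s(b)+\bigl(f-\nu(b)\bigr)\in\Ac$ (the second summand living in $\Jc$) satisfies $q(a)=b$ and $\Fc_T(a)=\nu(b)+(f-\nu(b))=f$, so $\Phi(a)=f\oplus b$; this gives ``$\supseteq$''. It remains to upgrade $\nu$ to be completely isometric and an ``almost $*$-homomorphism'' (meaning $\nu(b_1)\nu(b_2)-\nu(b_1b_2)$ and $\nu(b)^*-\nu(b^*)$ vanish at infinity); the almost-multiplicativity is immediate because $s$ can be chosen with $q(s(b_1)s(b_2)-s(b_1b_2))=0$, so the difference lies in $\Jc$ and $\Fc_T$ sends it to something vanishing at infinity, and similarly for the adjoint, while complete isometry of $\nu$ follows from the fact that $\|\nu(b)\|\ge\limsup_{t\to\infty}\|\nu(b)(t)\|=\|b\|$ (using that the values $\nu(b)(t)$ converge in norm to the corresponding quotient, i.e. that the quotient norm on $\Bc$ is recovered by the behaviour at infinity of $T$), combined with $\|\nu(b)\|\le\|s(b)\|\le\|b\|$, and the analogous statement at every matrix level. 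I expect the main obstacle to be precisely this last step: verifying that the boundary behaviour at infinity of $T$ faithfully recovers the norm of $\Bc$ (so that $\nu$ is isometric, not merely contractive), which requires knowing that the closure of $T$ in $\widehat{\Ac}$ exhausts $\widehat{\Bc}=\widehat{\Ac}\setminus T$ in the appropriate limiting sense — this is where the hypothesis $T\in\opn{Sep}(\widehat{\Ac})$ and density of $T$ are used, and it should be phrased via a careful limit argument over nets in $T$ approaching points of $\widehat{\Bc}$.

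For assertion (3), I would simply take the completely positive section $s$ from the proof of (2) and modify it to be completely isometric: this is automatic once one knows $\|s(b)\|\ge\|q(s(b))\|=\|b\|$, which always holds, together with $\|s(b)\|\le\|b\|$ from contractivity of $s$; hence $s$ itself is already isometric, and the same at every matrix level since $s$ is completely contractive and $q\circ s=\id_\Bc$ is completely isometric. So the Choi--Effros lift produced above \emph{is} the desired completely isometric cross section, and no further work is needed. The one subtlety is the non-unital case, where one applies Choi--Effros to the unitizations or uses the known extension of the lifting theorem to contractive completely positive maps between separable nuclear $C^*$-algebras; I would cite the standard reference for that and move on.
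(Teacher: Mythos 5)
Your route is genuinely different from the paper's: the paper works with the Busby invariant $\beta\colon\Bc\to\Cc_b(T,\Kc)/\Cc_0(T,\Kc)$, uses Delaroche's results (the $\opn{Sep}$ hypothesis) to show $\beta$ lands in $\Cc_b(T,\Kc)/\Cc_0(T,\Kc)$ and Busby's essentiality criterion (density of $T$) to show $\beta$ is an injective $*$-homomorphism, hence completely isometric, and only then lifts $\beta$ through $\Cc_b(T,\Kc)\to\Cc_b(T,\Kc)/\Cc_0(T,\Kc)$ by Choi--Effros; you instead lift $\id_{\Bc}$ through $q$ to get a completely positive section $s$ and set $\nu:=\Fc_T\circ s$. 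Along your route, assertion (1), the complete positivity and almost-$*$-homomorphism properties of $\nu$, the identification of $\Ran\Phi$, and assertion (3) are all correct, and your direct computation of $\Ran\Phi$ and your argument for (3) are in fact shorter than the paper's appeal to Busby's pullback description (modulo the routine unitization remark needed to pass from ``cp contraction'' to ``completely contractive'', which the paper spells out).

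The genuine gap is exactly where you flag it: the complete isometry of $\nu$, i.e.\ the identity $\limsup_{t\to\infty}\norm{\nu(b)(t)}=\norm{b}$. You do not prove it, and the mechanism you gesture at (``the values $\nu(b)(t)$ converge in norm to the corresponding quotient'', a limit argument over nets approaching points of $\widehat{\Bc}$) is not a proof and is misleading as stated: the operators $\pi_t(a)$ do not converge in norm to anything, and only norm functions $t\mapsto\norm{\pi_t(a)}$ behave well (Fell). This is the substantive content of the proposition and is precisely where the paper uses \cite{De72} and \cite[Cor.~6.4]{Bu68}. Within your framework it can be closed algebraically rather than by a limit argument: the map $b\mapsto\nu(b)+\Cc_0(T,\Kc)$ is independent of the choice of $s$ and is a $*$-homomorphism into $\Cc_b(T,\Kc)/\Cc_0(T,\Kc)$ (you already proved multiplicativity modulo $\Cc_0$); it is injective, because if $\Fc_T(s(b))\in\Cc_0(T,\Kc)$ then, since $\Fc_T$ restricted to $\Jc$ is a $*$-isomorphism onto $\Cc_0(T,\Kc)$, there is $j\in\Jc$ with $\Fc_T(s(b)-j)=0$, and $\Fc_T$ is injective by density of $T$, so $s(b)=j\in\Jc$ and $b=q(s(b))=0$. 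An injective $*$-homomorphism is completely isometric, and since $\norm{f+\Cc_0(T,\Kc)}=\limsup_{t\to\infty}\norm{f(t)}$, this gives your limsup identity; complete isometry of $\nu$ then follows from complete contractivity of $\nu$ together with complete isometry of the composition with the quotient map. Note also that the role of $T\in\opn{Sep}(\widehat{\Ac})$ is not in this norm identity (density alone suffices there): it is what guarantees, via \cite[Props.~III.4.1, IV.1.3]{De72}, that $\pi(a)$ is compact and $\gamma\mapsto\pi_\gamma(a)$ is norm continuous and bounded for every $a\in\Ac$, i.e.\ that $\Fc_T$ really maps $\Ac$ into $\Cc_b(T,\Kc)$ --- a fact your proof (and the construction $\nu=\Fc_T\circ s$) tacitly assumes.
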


\begin{proof}
Since $T$ is the spectrum of the ideal $\Jc=\Cc_0(T,\Kc)$ of $\Ac$,  and the points of $T$ are closed and separated in $\widehat{\Ac}$ sense of \cite[Def. II.6]{De72}, 
it follows by \cite[Prop.~IV.1.3]{De72}  that the Busby invariant $\gamma$ of the extension of $C^*$-algebras 
$$ 0\to\Jc\hookrightarrow \Ac\mathop{\longrightarrow}\limits^q\Bc\to 0 $$
has the range contained into $L(\Jc)/\Jc$, 
where $L(\Jc)$ is the largest two-sided closed ideal of the multiplier algebra of $\Jc$ 
which contains $\Jc$ and for which $\widehat{\Jc}\in\opn{Sep}(\widehat{L(\Jc)})$, that is, the points of $T=\widehat{\Jc}$ 
are closed and separated in $\widehat{L(\Jc)}$ 
(\cite[Lemme III.2.4]{De72}). 
On the other hand, since $\Jc=\Cc_0(T,\Kc)$, 
we have $L(\Jc)=\Cc_b(T,\Kc)$ by \cite[Prop. III.4.1]{De72}. 
Consequently the Busby invariant of the extension of $C^*$-algebras from the statement 
is a $*$-homomorphism 
$$\beta\colon\Bc\to\Cc_b(T,\Kc)/\Cc_0(T,\Kc).$$
Moreover, since $T$ was assumed to be dense in $\widehat{\Ac}$, 
it follows by \cite[Cor. 6.4]{Bu68} that the $*$-homomorphism $\beta$ is injective.  
Therefore $\beta$ is a complete isometry and it is moreover completely positive. 

On the other hand, since the $C^*$-algebra $\Ac$ is nuclear, 
it follows by \cite[Cor. 4]{CE77} that also its quotient $\Bc$ is nuclear. 
Also $\Bc$ is separable since $\Ac$ is. 
It then follows by the completely positive lifting theorem \cite[Th. 3.10]{CE76} 
that there exists a completely positive contraction $\nu\colon\Bc\to\Cc_b(T,\Kc)$ 
with the property $\beta(b)=\nu(b)+\Cc_0(T,\Kc)$ for all $b\in\Bc$. 
The unital extension of $\nu$ to the unitizations of the $C^*$-algebras $\Bc$ and $\Cc_b(T,\Kc)$ 
is completely positive by \cite[Lemma 3.9]{CE76}, 
hence completely contractive by \cite[1.3.3]{BL04}. 
Therefore $\nu$ is both completely positive and completely contractive. 

Now note that the canonical map 
$$\widetilde{q}\colon \Cc_b(T,\Kc)\to\Cc_b(T,\Kc)/\Cc_0(T,\Kc), \quad f\mapsto f+\Cc_0(T,\Kc),$$ 
is a $*$-homomorphism, hence completely contractive, and moreover 
$\widetilde{q}\circ\nu=\beta$ is completely isometric. 
Since we have proved above that $\nu$ is completely contractive, 
it then  follows that $\nu$ is actually completely isometric. 
Moreover, since $\widetilde{q}\circ\nu=\beta$ is a $*$-homomorphism, 
it follows that $\nu$ is an almost $*$-homomorphism,  
which means that for all $b_1,b_2\in\Bc$ we have  
$\lim\limits_{t\to\infty}(\nu(b_1b_2)-\nu(b_1)\nu(b_2))(t)=0$ 
and $\nu(b_1^*)=\nu(b_1)^*$.

For proving the assertion on $\Ran\Phi$, 
note that we have the commutative diagram 
$$\xymatrix{
0 \ar[r] & \Jc \ar[d]^{\Fc\vert_{\Jc}}\ar[r] & \Ac \ar[r]^{q} & \Bc \ar[dl]_{\nu} \ar[d]^{\beta} \ar[r] & 0 \\
0 \ar[r] & \Cc_0(T,\Kc) \ar[r]& \Cc_b(T,\Kc) \ar[r]^{\widetilde{q}\ \ \ } & \Cc_b(T,\Kc)/\Cc_0(T,\Kc) \ar[r] & 0
}$$
and the fact that $\beta$ is the Busby invariant of the extension from the upper row of the above diagram 
implies by  \cite[Proof of Prop. 4.2, Th. 4.3]{Bu68} that 
we have the $*$-isomorphism of $C^*$-algebras 
\begin{equation}\label{S1_proof_eq1}
\Ac\simeq\{f\oplus b \in\Cc_b(T,\Kc)\oplus\Bc\mid\widetilde{q}(f)=\beta(b)\},\quad 
a\mapsto \Fc_T(a)\oplus q(a). 
\end{equation}
Note that for all $f\oplus b\in\Cc_b(T,\Kc)\oplus \Bc$ 
we have 
$$\widetilde{q}(f)=\beta(b)\iff f+\Cc_0(T,\Kc)=\nu(b)+\Cc_0(T,\Kc)
\iff f-\nu(b)\in\Cc_0(T,\Kc)$$
and for every $f\in\Cc_b(T,\Kc)$ there exists at most one $b\in\Bc$ satisfying the above condition, 
since if we have $f-\nu(b_1),f-\nu(b_2)\in\Cc_0(T,\Kc)$, 
then 
$$\beta(b_1)=\nu(b_1)+\Cc_0(T,\Kc)=\nu(b_2)+\Cc_0(T,\Kc)=\beta(b_2)$$ 
hence $b_1=b_2$. 

Finally, since $\nu$ is completely isometric, it follows that the map 
$$\iota \colon \Bc\to \Cc_b(T,\Kc)\oplus \Bc, \quad b\mapsto \nu(b)\oplus b$$
is completely isometric and its image is contained in the image 
of~$\Ac$ by the $*$-isomorphism~\eqref{S1_proof_eq1}. 
By composing the inverse of that $*$-isomorphism with $\iota$ 
we obtain a completely isometric cross section of~$q\colon\Ac\to\Bc$, 
and this completes the proof. 
\end{proof}

\begin{remark}\label{S2}
\normalfont
If we have a short exact sequence $0\to\Jc\to\Ac\to\Bc\to 0$ 
where $\Ac$ is any $C^*$-algebra of type~I, 
then it follows that also its quotient $\Bc$ is of type~I, 
and both $\Ac$ and $\Bc$ are nuclear by \cite[Prop.~1.6, Ch. XV]{Ta03}. 
\qed
\end{remark}

\begin{definition}\label{fourtrans}
Let $\Ac$ be a $C^*$-algebra with spectrum $\widehat{\Ac}$. 
We choose for every $\gamma\in \widehat{\Ac}$ a representation 
$(\pi_\gamma,\H_\gamma)$ in the equivalence class $\gamma$.
Let $\ell^{\infty}(\widehat{\Ac})$ be the algebra of all bounded operator fields defined over $\widehat{\Ac}$ by
$$ \ell^{\infty}(\widehat{\Ac}):=\left\{\phi=(\phi(\pi_\gamma)\in\Bc(\Hc_{\gamma}))_{\gamma\in\widehat{\Ac}}\mid
\norm{\phi}_\infty
:=\sup_{\gamma}\norm{\phi(\pi_\gamma)}_{\Bc(\Hc_\gamma)}<\infty\right\}.$$
We define for $a\in \Ac$ its Fourier transform $ \Fc_{\Ac} (a)=\hat a $ by
$$
\Fc_{\Ac}(a)(\gamma)=\hat{a}(\gamma)=\pi_\gamma(a),\quad  \gamma\in\hat{\Ac}.
$$
Then $\Fc_{\Ac}(a) $ is a bounded field of operators over $\widehat{\Ac}$, and 
the mapping
$$
 \Fc_{\Ac} \colon \Ac \to \ell^\infty(\widehat{\Ac}), \quad  a \mapsto \Fc_{\Ac}(a)
$$
is an isometric $*$-homomorphism. 
 \end{definition}

\begin{theorem}\label{S5}
Let $\Ac$ be any separable liminary $C^*$-algebra 
with an increasing sequence of open subsets of its spectrum 
$$\emptyset=V_0\subseteq V_1\subseteq\cdots\subseteq V_n=\widehat{\Ac} $$
with the corresponding sequence of closed two-sided ideals of $\Ac$ 
$$\{0\}=\Jc_0\subseteq \Jc_1\subseteq\cdots\subseteq\Jc_n=\Ac$$
with $\widehat{\Jc_\ell}=V_\ell$ 
satisfying the following conditions for $\ell=1,\dots,n$: 
\begin{enumerate}
\item\label{S5_item1} 
The set $\Gamma_\ell:=V_\ell\setminus V_{\ell-1}$ is dense in  $\widehat{\Ac}\setminus V_{\ell-1}$.
\item\label{S5_item2} 
There exist a complex Hilbert space $\Hc_\ell$ and a complete system of distinct representatives 
$\{\pi_\gamma\colon\Ac\to\Bc(\Hc_\ell)\}_{\gamma\in \Gamma_\ell}$ 
of the equivalence classes of representations corresponding to the elements of $\Gamma_\ell$ 
such that for every $a\in\Ac$ the mapping $\Gamma_\ell\to\Kc(\Hc_\ell)$, $\gamma\mapsto\pi_\gamma(a)$ 
is continuous with respect to the norm operator topology of $\Kc(\Hc_\ell)$. 
\end{enumerate}
For $\ell=0,\dots,n$  we define 
$$\Lc_\ell:=\{f\colon\widehat{\Ac}\setminus V_\ell\to\Kc(\bigoplus_{j=\ell+1}^n\Hc_j)\mid(\forall j\in\{\ell+1,\dots,n\})
(\forall\gamma\in\Gamma_j)\quad 
f(\gamma)\in\Kc(\Hc_j)\}$$
and 
$$\Fc_{\Ac/\Jc_\ell}\colon\Ac/\Jc_\ell\to\Lc_\ell,\quad (\Fc_{\Ac/\Jc_\ell}(a+\Jc_\ell))(\gamma):=\pi_\gamma(a).$$
Then there is a family of linear maps 
$\{\nu_\ell\colon\Fc_{\Ac/\Jc_\ell} (\Ac/\Jc_\ell) \to\Cc_b(\Gamma_\ell,\Kc(\Hc_\ell))\}_{1\le \ell\le n}$,  
which are completely positive, completely isometric, almost $*$-homomorphisms, 
and for which  the image $\Fc_{\Ac}(\Ac)$ is precisely the set of all $f\in \Lc_0$ such that 
\begin{equation}\label{S5_eq1}
f\vert_{\Gamma_{\ell}} - \nu_\ell(f \vert_{\hat{\Ac}\setminus{V_{\ell}}}) \in \Cc_0(\Gamma_\ell,\Kc(\Hc_\ell))
\end{equation}
for all $\ell\in\{1,\dots,n-1\}$. 
\end{theorem}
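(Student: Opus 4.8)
The plan is to argue by induction on $n$, at each step splitting off the dense open stratum $\Gamma_1=V_1$ by means of Proposition~\ref{S1}. Some bookkeeping comes first. Every quotient $\Ac/\Jc_{\ell-1}$ is separable and liminary, hence of type~I and thus nuclear (Remark~\ref{S2}), and its spectrum is $\widehat{\Ac}\setminus V_{\ell-1}$, inside which $\Gamma_\ell=V_\ell\setminus V_{\ell-1}$ is a dense open subset by~\eqref{S5_item1}. I would first observe that the hypotheses force the relative topology of $\Gamma_\ell$ in $\widehat{\Ac}\setminus V_{\ell-1}$ to be Hausdorff, and in fact that $\Gamma_\ell\in\opn{Sep}(\widehat{\Ac/\Jc_{\ell-1}})$: two inequivalent representatives among $\{\pi_\gamma\}_{\gamma\in\Gamma_\ell}$ have distinct kernels (as $\Ac$ is liminary) and so are separated in norm by some $a\in\Ac$, whence the norm-continuity in~\eqref{S5_item2}, together with the openness and density of $\Gamma_\ell$, gives the required separation of points. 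Granting this, Lemma~\ref{S4} applied to the pair $\Jc_{\ell-1}\subseteq\Jc_\ell$ yields a $*$-isomorphism $\Jc_\ell/\Jc_{\ell-1}\simeq\Cc_0(\Gamma_\ell,\Kc(\Hc_\ell))$ which identifies the Fourier transform of $\Jc_\ell/\Jc_{\ell-1}$ with the restriction of operator fields to $\Gamma_\ell$. (Here $\Lc_\ell$ is understood to carry the natural boundedness and strata-wise norm-continuity, so that in particular $\Fc_{\Ac/\Jc_{n-1}}(\Ac/\Jc_{n-1})=\Cc_0(\Gamma_n,\Kc(\Hc_n))=\Lc_{n-1}$; the recursion below preserves this automatically.)

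For $n=1$ the statement reduces to Lemma~\ref{S4}: $\Fc_{\Ac}$ is the $*$-isomorphism $\Ac\simeq\Cc_0(\Gamma_1,\Kc(\Hc_1))=\Lc_0$, and \eqref{S5_eq1} is vacuous. For $n\ge 2$, I would apply Proposition~\ref{S1} to the extension
\[
0\to \Jc_1\hookrightarrow \Ac \mathop{\longrightarrow}\limits^{q_1} \Ac/\Jc_1\to 0,
\]
which is legitimate since $\Jc_1\simeq\Cc_0(\Gamma_1,\Kc(\Hc_1))$ with $\Gamma_1$ open, dense and in $\opn{Sep}(\widehat{\Ac})$. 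This produces a completely positive, completely isometric, almost-$*$-homomorphism $\nu\colon\Ac/\Jc_1\to\Cc_b(\Gamma_1,\Kc(\Hc_1))$ together with the description of the image. Now one identifies: the restriction of $\Fc_{\Ac}(a)$ to $\Gamma_1$ with the map $\Fc_T$ of Proposition~\ref{S1}; the restriction of $\Fc_{\Ac}(a)$ to $\widehat{\Ac}\setminus V_1$ with $\Fc_{\Ac/\Jc_1}(q_1(a))$, since the classes in $\widehat{\Ac}\setminus V_1$ are exactly those annihilating $\Jc_1$; and $q$ with $q_1$. Putting $\nu_1:=\nu\circ\Fc_{\Ac/\Jc_1}^{-1}$, which is again completely positive, completely isometric and an almost-$*$-homomorphism because $\Fc_{\Ac/\Jc_1}$ is an isometric $*$-isomorphism onto its image, Proposition~\ref{S1}(2) reads
\[
\Fc_{\Ac}(\Ac)=\{\,f\in\Lc_0 \ :\ f|_{\widehat{\Ac}\setminus V_1}\in\Fc_{\Ac/\Jc_1}(\Ac/\Jc_1)\ \text{and}\ f|_{\Gamma_1}-\nu_1(f|_{\widehat{\Ac}\setminus V_1})\in\Cc_0(\Gamma_1,\Kc(\Hc_1))\,\}.
\]

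The quotient $\Ac/\Jc_1$, with spectrum $\widehat{\Ac}\setminus V_1$, ideals $\Jc_j/\Jc_1$ and strata $\Gamma_2,\dots,\Gamma_n$, again satisfies the hypotheses of the theorem, now with $n-1$ in place of $n$ and with $\Lc_1$ playing the role of $\Lc_0$ (density and the norm-continuous complete systems are inherited, the relevant $\pi_\gamma$ factoring through $\Ac/\Jc_1$). By the induction hypothesis there are maps $\nu_\ell\colon\Fc_{\Ac/\Jc_\ell}(\Ac/\Jc_\ell)\to\Cc_b(\Gamma_\ell,\Kc(\Hc_\ell))$ for $\ell=2,\dots,n$ (the last one being the trivial map on the zero algebra $\Fc_{\Ac/\Jc_n}(\Ac/\Jc_n)$) with the stated properties, and
\[
\Fc_{\Ac/\Jc_1}(\Ac/\Jc_1)=\{\,g\in\Lc_1 \ :\ g|_{\Gamma_\ell}-\nu_\ell(g|_{\widehat{\Ac}\setminus V_\ell})\in\Cc_0(\Gamma_\ell,\Kc(\Hc_\ell))\ \text{for all}\ \ell=2,\dots,n-1\,\}.
\]
Substituting this into the previous display yields exactly the claimed description of $\Fc_{\Ac}(\Ac)$, with the family $\nu_1,\dots,\nu_n$. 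Boundedness and strata-wise norm-continuity of the fields in $\Lc_0$ come for free: the recursion bottoms out at $\Fc_{\Ac/\Jc_{n-1}}(\Ac/\Jc_{n-1})=\Cc_0(\Gamma_n,\Kc(\Hc_n))$, and each $\nu_\ell$ outputs bounded norm-continuous fields on $\Gamma_\ell$, so every step of the recursion only adds bounded, strata-continuous corrections.

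The part I expect to be the main obstacle is not the induction itself but the verification, at every level, of the hypotheses under which Proposition~\ref{S1} is invoked — above all that each stratum $\Gamma_\ell$ lies in $\opn{Sep}$ of the spectrum of the quotient $\Ac/\Jc_{\ell-1}$, which is exactly what forces the Busby invariant of the corresponding extension into $L(\Jc)/\Jc$ — together with the uniform matching, across all $\ell$, of the three constituents of the extension $0\to\Jc_1\to\Ac\to\Ac/\Jc_1\to 0$ with the three pieces (restriction to $\Gamma_1$, restriction to the complement, quotient map) appearing in Proposition~\ref{S1}, so that the local descriptions glue into the global one. A secondary technical point is to make precise the space $\Lc_0$: it must carry strata-wise continuity for the statement to be literally correct, but this is respected by the recursion, as noted.
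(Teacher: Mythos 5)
Your proposal is correct and takes essentially the same route as the paper: identify $\Jc_\ell/\Jc_{\ell-1}\simeq\Cc_0(\Gamma_\ell,\Kc(\Hc_\ell))$ via Lemma~\ref{S4}, verify $\Gamma_\ell\in\opn{Sep}(\widehat{\Ac}\setminus V_{\ell-1})$, and peel off one stratum at a time with Proposition~\ref{S1}; the paper simply runs this recursion for $\ell=n,n-1,\dots,1$ instead of phrasing it as an induction on $n$. The one adjustment is that the separation of the points of $\Gamma_\ell$ is obtained, as in the paper, from the continuity of $\gamma\mapsto\Vert\pi_\gamma(a)\Vert$ on the open set $\Gamma_\ell$ together with Fell's criterion (points being closed because $\Ac$ is liminary), not from your distinct-kernel remark, which at best separates points of $\Gamma_\ell$ from each other and says nothing about separating them from points of $\widehat{\Ac}\setminus V_\ell$.
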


\begin{remark}\label{fouriers}
\normalfont 
Under the assumptions in Theorem~\ref{S5},   
for all $a\in \Ac$ we have
$$(\Fc_{\Ac/\Jc_{\ell-1}} (a+\Jc_{\ell-1}))(\gamma)=\pi_\gamma(a)=\Fc_{\Ac}(a)(\gamma) \in\Kc(\Hc_\ell)$$
for all $\gamma\in \hat \Ac\setminus V_{l-1}$. 
This follows from 
\cite[Prop. 2.10.4]{Dix64} and the fact that
$$\hat \Ac \setminus V_{\ell-1}
=\widehat{\Ac/\Jc_{\ell-1}}
=\{[\pi]\in\widehat{\Ac}\mid\Jc_{\ell-1}\subseteq\Ker\pi \}. 
$$
\qed
\end{remark}

\begin{proof}[Proof of Theorem~\ref{S5}]
Recall from Lemma~\ref{L1} and Remark~\ref{S3} that $\widehat{\Jc_\ell/\Jc_{\ell-1}}=V_\ell\setminus V_{\ell-1}$,  
hence Lemma~\ref{S4} implies that 
the canonical field of elementary $C^*$-algebras defined by $\Jc_\ell/\Jc_{\ell-1}$ is trivial. 
Thus we obtain an isometric $*$-isomorphism 
$\Jc_\ell/\Jc_{\ell-1}\simeq\Cc_0(V_\ell\setminus V_{\ell-1},\Kc(\Hc_\ell))$
by the Fourier transform. 
Then the canonical short exact sequence 
$$0\to\Jc_\ell/\Jc_{\ell-1}\to\Ac/\Jc_{\ell-1}\to\Ac/\Jc_\ell\to0$$
leads to a short exact sequence 
$$0\to\Cc_0(V_\ell\setminus V_{\ell-1},\Kc(\Hc_\ell))\to \Ran \Fc_\Ac/\Jc_{\ell-1} \to 
\Ran \Fc_{\Ac/\Jc_{\ell}}\to 0.$$
Here the second arrow is simply the inclusion, while the third is the restriction to $\hat \Ac\setminus V_{\ell}$.

Note that the set $\Gamma_\ell:=V_\ell\setminus V_{\ell-1}$ is in $\opn{Sep}( \widehat{\Ac}\setminus V_{\ell-1})$. 
Indeed, all points of $\widehat{\Ac}$ are closed since $\Ac$ is liminary.
In addition, the function
$[\pi]\mapsto\Vert\pi(a)\Vert$ is continuous
on $\Gamma_\ell$ for every $a\in\Ac$ by \eqref{S5_item2}.
This implies that every point of the open set
$\Gamma_\ell$ separated in
$\widehat{\Ac}$ (see \cite[p. 116, (iii)]{Dix61} or the proof of 
\cite[Th. 2.1]{Fe60}).  
Then by Proposition~\ref{S1}   we obtain 
a completely positive, completely contractive, almost $*$-homomorphism  
$\nu_\ell\colon  \Ran \Fc_{\Ac/\Jc_\ell} \to\Cc_b(\Gamma_\ell,\Kc(\Hc_\ell))$ 
with 
$$
\begin{aligned} \Ran & \Fc_{\Ac/\Jc_{\ell-1}}  =\\
& \Bigl\{f\oplus b\in\Cc_b(\Gamma_\ell,\Kc(\Hc_\ell))\oplus \Ran  \Fc_{\Ac/\Jc_{\ell}} \mid 
f-\nu_\ell(b)\in\Cc_0(\Gamma_\ell,\Kc(\Hc_\ell))\Bigr\}.
\end{aligned} 
$$
We have that  
$$ f\oplus b \vert_{\hat{\Ac} \setminus V_\ell}= b, \quad f\oplus b  \vert_{\Gamma_\ell} = f,$$
hence
when $a\in \Ac$
$$ \Fc_{\Ac/\Jc_{\ell-1}}(a + \Jc_{\ell-1})  \vert_{\Gamma_\ell} - 
\nu_l \big(\Fc_{\Ac/\Jc_{\ell-1}}(a + \Jc_{\ell-1}) \vert_{\hat{\Ac} \setminus V_\ell}\big)\in\Cc_0(\Gamma_\ell,\Kc(\Hc_\ell)).$$
On the other hand, 
for all $\gamma\in \Gamma_\ell$ 
and $a\in\Ac$ we have 
$$(\Fc_{\Ac/\Jc_{\ell-1}} (a+\Jc_{\ell-1}))(\gamma)=\pi_\gamma(a)=\Fc_{\Ac}(a)(\gamma) \in\Kc(\Hc_\ell).$$
Using  now the above description of $\Ran \Fc_{\Ac/\Jc_{\ell-1}}$, 
we see that $\Fc_{\Ac}(a)$ satisfies~\eqref{S5_eq1}. 

Then, by successively considering the cases $\ell=n,n-1,\dots,1$, we obtain  
the description of $\Ran\Fc_{\Ac/\Jc_{\ell-1}}$, 
that for $\ell=1$ concludes the proof. 
\end{proof}

\subsection{$C^*$-algebras with norm controlled dual limits}

\begin{definition}[see also \cite{LuRe15}]\label{lcd} 
i) Let $S$ be  a topological space. 
We say that $S $ is \textit{locally compact of step $\leq d$} if  
there exists a 
finite increasing family $ \emptyset \ne  S_d\subset S_{d-1}\subset\cdots\subset S_0=S $ of closed 
subsets of $S $,  such that   the subsets $\Gamma_d=S_d$ and  
$ \Gamma_i:=S_{i-1}\setminus S_{i}$,  $i=1,\dots, d$,   
are  locally compact and Hausdorff in  their relative topologies.

ii) Let $S$ be locally compact of step $\le d$, and let $\{\Hc_i\}_{i=1, \dots, d}$ be Hilbert spaces. 
For a closed subset $ M \subset S $, denote by $ CB(M) $ the unital $C^*$-algebra of all uniformly 
bounded operator fields  $ (\psi(\gamma)\in \Bc(\Hc_i))_{\gamma\in S\cap \Gamma_i, i=1,\dots, d}$, 
which are 
operator norm continuous on the subsets $ \Gamma_i \cap M$ for every $  i\in\{1,\dots, d\} $ with  $ \Gamma_i\cap M\ne\emptyset $. 
We provide the algebra $ CB(M) $ with the infinity-norm
$$
\norm{\varphi}_{M}=\sup\left\{\norm{\varphi(\gamma)}_{\Bc(\Hc_i)}\mid M \cap \Gamma_i\ne \emptyset, \, \gamma\in M \cap \Gamma_i\right\}.
$$
\end{definition}

\begin{definition}[see also \cite{LuRe15}]\label{norcontspec}
Let $\Ac $ be a separable liminary $ C^* $-algebra. 
  We assume that the spectrum $\widehat{\Ac} $ of $\Ac$ is a locally compact space of step $\leq d$, 
 $$ \emptyset= S_{d+1}\subset  S_d\subset S_{d-1}\subset \cdots \subset S_0=\widehat{\Ac}, $$  
 and that for  $0\le i \le d $ 
there  is a Hilbert space $\Hc_i $, and for every $ \gamma\in \Gamma_i $  a concrete realization $ (\pi_\gamma,\H_i) $ of 
$\gamma $ on the Hilbert space $ \H_i $. 
The set $ S_d $ is the collection of all 
characters of $\Ac$.

We say that the $C^* $-algebra $\Ac$ has  \textit{norm controlled dual limits} if for every 
 $ a\in \Ac $ one has
\begin{enumerate}
\item\label{nrocontspec_1}  The mappings $ \gamma\to \Fc (a)(\gamma) $ are norm continuous  
on the difference sets $ \Gamma_i = S_{i-1}\setminus S_i $.
\item\label{nrocontspec_2}    For any  $ i=0,\dots, d+1$  and for any 
converging sequence contained in $ \Gamma_i $ with limit set  outside $ \Gamma_i $, hence in $S_i$, 
there exists a properly converging sub-sequence $\overline{\gamma}=(\gamma_k)_{k\in\NN} $,  a constant  $ C>0 $ 
and for every $ k\in\NN $  an involutive  linear mapping $ \tilde\sigma_{\overline{\gamma},k}: CB(S_{i})\to \Bc(\H_i)$, 
that is  bounded by $ C\norm{\cdot}_{S_{i}} $, such that
$$
(\forall a\in\Ac)\quad \lim_{k\to\infty}\norm{\Fc (a)(\gamma_k)-
\tilde\sigma_{\overline\gamma,k} (\Fc (a)\vert_{S_{i}})}_{\Bc(\Hc_i)}=0.
$$
 \end{enumerate}
 \end{definition}

\begin{theorem}\label{newcor}
Assume the $C^*$-algebra $\Ac$ satisfies 
the conditions Theorem~\ref{S5}.  Then $\Ac$ has norm controlled dual limits. 
Specifically, with the notations of Theorem~\ref{S5}, 
let 
$1\le \ell \le n$ be fixed, and  
$\bar{\gamma}=(\gamma_k)_{k} \in \Gamma_\ell$ be a properly convergent  sequence with limit set outside $\Gamma_\ell$. 
Then there exists a sequence   $(\sigma_{\bar{\gamma}, k})_k$ 
of completely positive and completely contractive maps 
$\sigma_{\bar{\gamma}, k}\colon CB(\hat{\Ac}\setminus V_{\ell})  
\to \Bc(\Hc_\ell)$ 
such that 
$$(\forall a\in\Ac)\quad  
\lim_{k\to \infty} \norm{\Fc(a) (\gamma_k)- \sigma_{\bar{\gamma}, k}(\Fc(a)\vert_{\hat{\Ac}\setminus V_{\ell}})}_{\Bc(\Hc_{\ell})}=0.$$
\end{theorem}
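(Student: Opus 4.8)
The strategy is to extract the maps $\sigma_{\bar\gamma,k}$ directly from the structure obtained in Theorem~\ref{S5}, namely from the almost $*$-homomorphisms $\nu_\ell$ together with the evaluation maps at the points $\gamma_k$. First I would observe that by Theorem~\ref{S5} the restriction $\Fc(a)\vert_{\hat\Ac\setminus V_\ell}$ is exactly $\Fc_{\Ac/\Jc_\ell}(a+\Jc_\ell)$, an element of $\Ran\Fc_{\Ac/\Jc_\ell}$, and that $\nu_\ell$ is defined on this range with values in $\Cc_b(\Gamma_\ell,\Kc(\Hc_\ell))$. The key identity from the proof of Theorem~\ref{S5} is that
$$\Fc(a)\vert_{\Gamma_\ell} - \nu_\ell\bigl(\Fc(a)\vert_{\hat\Ac\setminus V_\ell}\bigr)\in\Cc_0(\Gamma_\ell,\Kc(\Hc_\ell)),$$
so that for any sequence $\gamma_k\in\Gamma_\ell$ escaping to the boundary of $\Gamma_\ell$ (i.e. with limit set outside $\Gamma_\ell$), the $\Cc_0$-term evaluated at $\gamma_k$ tends to $0$ in $\Kc(\Hc_\ell)$. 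Hence it is natural to set
$$\sigma_{\bar\gamma,k}(\psi):=\bigl(\nu_\ell(\psi)\bigr)(\gamma_k)\in\Bc(\Hc_\ell),\qquad \psi\in CB(\hat\Ac\setminus V_\ell),$$
after first arguing that $\nu_\ell$ extends to all of $CB(\hat\Ac\setminus V_\ell)$, not merely to $\Ran\Fc_{\Ac/\Jc_\ell}$.

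The extension issue is handled as follows. The map $\nu_\ell$ was produced in Proposition~\ref{S1} via the completely positive lifting theorem applied to the Busby invariant $\beta_\ell\colon\Ac/\Jc_\ell\to\Cc_b(\Gamma_\ell,\Kc(\Hc_\ell))/\Cc_0(\Gamma_\ell,\Kc(\Hc_\ell))$. I would instead apply the Choi–Effros lifting to (a unitization of) the $C^*$-algebra $CB(\hat\Ac\setminus V_\ell)$: since $\Ran\Fc_{\Ac/\Jc_\ell}$ sits inside $CB(\hat\Ac\setminus V_\ell)$ as the $C^*$-subalgebra of genuinely continuous fields that are moreover in the image of the Fourier transform, and since the relevant nuclearity is inherited, one obtains a completely positive contraction $\tilde\nu_\ell\colon CB(\hat\Ac\setminus V_\ell)\to\Cc_b(\Gamma_\ell,\Kc(\Hc_\ell))$ agreeing with a lift of $\beta_\ell$ on the subalgebra; alternatively, and more cleanly, one composes $\nu_\ell$ with a completely positive conditional-expectation-type projection of $CB(\hat\Ac\setminus V_\ell)$ onto $\Ran\Fc_{\Ac/\Jc_\ell}$. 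Then each evaluation $\ev_{\gamma_k}\colon\Cc_b(\Gamma_\ell,\Kc(\Hc_\ell))\to\Kc(\Hc_\ell)\subseteq\Bc(\Hc_\ell)$ is a $*$-homomorphism, hence completely positive and completely contractive, so $\sigma_{\bar\gamma,k}:=\ev_{\gamma_k}\circ\tilde\nu_\ell$ is completely positive and completely contractive as a composition. This gives maps of the required type; in the terminology of Definition~\ref{norcontspec} they are in particular involutive linear maps bounded by $\norm{\cdot}_{S_i}$ with constant $C=1$, and (reindexing $S_i=\hat\Ac\setminus V_\ell$ appropriately) they furnish the data $\tilde\sigma_{\bar\gamma,k}$ required there.

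It remains to verify the limit formula. Fix $a\in\Ac$ and write $\psi:=\Fc(a)\vert_{\hat\Ac\setminus V_\ell}=\Fc_{\Ac/\Jc_\ell}(a+\Jc_\ell)$, which lies in the subalgebra on which $\tilde\nu_\ell$ restricts to $\nu_\ell$. Then
$$\Fc(a)(\gamma_k)-\sigma_{\bar\gamma,k}(\psi)=\Fc(a)(\gamma_k)-\bigl(\nu_\ell(\psi)\bigr)(\gamma_k)=\Bigl(\Fc(a)\vert_{\Gamma_\ell}-\nu_\ell(\psi)\Bigr)(\gamma_k),$$
and the field in the last parenthesis lies in $\Cc_0(\Gamma_\ell,\Kc(\Hc_\ell))$ by Theorem~\ref{S5}. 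Since $\bar\gamma$ is a properly convergent sequence in $\Gamma_\ell$ whose limit set lies outside $\Gamma_\ell$, the sequence $(\gamma_k)_k$ eventually leaves every compact subset of $\Gamma_\ell$ (this is where proper convergence of the sequence is used, ruling out a subsequence staying in a compact piece of $\Gamma_\ell$), so any element of $\Cc_0(\Gamma_\ell,\Kc(\Hc_\ell))$ evaluated along $\gamma_k$ tends to $0$ in $\Kc(\Hc_\ell)\subseteq\Bc(\Hc_\ell)$. Therefore
$$\lim_{k\to\infty}\norm{\Fc(a)(\gamma_k)-\sigma_{\bar\gamma,k}(\Fc(a)\vert_{\hat\Ac\setminus V_\ell})}_{\Bc(\Hc_\ell)}=0,$$
as claimed, and the statement that $\Ac$ has norm controlled dual limits follows by ranging over $\ell$ and checking the elementary continuity assertion (1) of Definition~\ref{norcontspec}, which is exactly hypothesis \eqref{S5_item2} of Theorem~\ref{S5}. \textbf{Main obstacle.} The only genuinely delicate point is the extension of $\nu_\ell$ from $\Ran\Fc_{\Ac/\Jc_\ell}$ to all of $CB(\hat\Ac\setminus V_\ell)$ while preserving complete positivity and the contractive bound — one must be careful that $CB(\hat\Ac\setminus V_\ell)$ is a $C^*$-algebra containing $\Ran\Fc_{\Ac/\Jc_\ell}$ as a (possibly non-ideal) $C^*$-subalgebra, and invoke either Arveson-type extension or the concrete product structure of $CB$ to produce a completely positive projection; everything else is a routine application of the boundary-vanishing identity already established in Theorem~\ref{S5}.
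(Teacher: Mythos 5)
Your overall strategy is the paper's: use the boundary identity $\Fc(a)\vert_{\Gamma_\ell}-\nu_\ell(\Fc(a)\vert_{\hat\Ac\setminus V_\ell})\in\Cc_0(\Gamma_\ell,\Kc(\Hc_\ell))$ from Theorem~\ref{S5}, evaluate at $\gamma_k$, and use the fact that a properly convergent sequence with limit set outside $\Gamma_\ell$ goes to infinity in $\Gamma_\ell$, so the $\Cc_0$-term vanishes in the limit. That part, and the verification of the limit formula, is fine. However, the step you yourself flag as the delicate one --- producing the extension to all of $CB(\hat\Ac\setminus V_\ell)$ --- is not correctly handled by either of the routes you propose. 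Applying the Choi--Effros lifting theorem to $CB(\hat\Ac\setminus V_\ell)$ is not legitimate: that theorem requires the \emph{domain} to be separable and nuclear, and $CB(\hat\Ac\setminus V_\ell)$ is a large $\ell^\infty$-type algebra that is neither separable nor known to be nuclear; nuclearity is certainly not ``inherited'' upward from the subalgebra $\Ran\Fc_{\Ac/\Jc_\ell}$. Your alternative, composing $\nu_\ell$ with a completely positive conditional-expectation-type projection of $CB(\hat\Ac\setminus V_\ell)$ onto $\Ran\Fc_{\Ac/\Jc_\ell}$, assumes the existence of a projection that there is no reason to have: conditional expectations onto arbitrary $C^*$-subalgebras do not exist in general. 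Also note that extending $\nu_\ell$ itself by an Arveson-type argument fails because its target $\Cc_b(\Gamma_\ell,\Kc(\Hc_\ell))$ is not injective.

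The repair is a simple reordering, and it is exactly what the paper does: do \emph{not} extend $\nu_\ell$; instead first compose with the evaluation, forming for each $k$ the completely positive map $\tilde\sigma_{\bar\gamma,k}:=\ev_{\gamma_k}\circ\nu_\ell\colon\Ran\Fc_{\Ac/\Jc_\ell}\to\Bc(\Hc_\ell)$, whose target is the injective algebra $\Bc(\Hc_\ell)$, and then apply Arveson's extension theorem (together with the unitization lemma of Choi--Effros to get complete contractivity in the non-unital setting) to extend each $\tilde\sigma_{\bar\gamma,k}$ separately to a completely positive, completely contractive map $\sigma_{\bar\gamma,k}\colon CB(\hat\Ac\setminus V_\ell)\to\Bc(\Hc_\ell)$. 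Since $\sigma_{\bar\gamma,k}$ agrees with $\ev_{\gamma_k}\circ\nu_\ell$ on $\Ran\Fc_{\Ac/\Jc_\ell}$, your computation of $\Fc(a)(\gamma_k)-\sigma_{\bar\gamma,k}(\Fc(a)\vert_{\hat\Ac\setminus V_\ell})$ goes through verbatim and the rest of your argument stands.
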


\begin{proof}
From Theorem~\ref{S5} we see that there is a linear map 
$$\nu_\ell\colon \Ran\Fc_{\Ac/{\Jc_{\ell}}} \to \Cc_b(\Gamma_{\ell}, \Kc(\Hc_{\ell}))$$
that is completely positive, completely isometric and almost $*$-homomorphism, and 
such that 
\begin{equation}\label{newcor_proof_eq1}
(\forall a\in\Ac)\quad 
 \Fc(a)\vert_{\Gamma_\ell}  - \nu_\ell\big(\Fc( a)\vert_{\hat{\Ac}\setminus V_{\ell}})\in  \Cc_0(\Gamma_\ell, \Kc (\Hc_{\ell})).
\end{equation}
Define now $\tilde{\sigma}_{\bar{\gamma}, k}\colon \Fc_{\Ac/\Jc_{\ell}} (\Ac/{\Jc_{\ell}} )\to \Bc(\Hc_{\ell})$, 
$$\tilde{\sigma}_{\bar{\gamma}, k} (\cdot):= \bigl(\nu_\ell(\cdot)\bigr)(\gamma_k),$$
for every $k\in\NN$. 
These maps are completely positive, and by \cite[Thm.~1.2.3]{Ar69} and \cite[Lemma~3.9]{CE76},  
they extend from the $C^\ast$-subalgebra 
$\Ran \Fc_{\Ac/\Jc_{\ell}}\subset CB(\hat{\Ac}\setminus V_\ell)$ to completely positive and completely contractive linear maps 
$$\sigma_{\bar{\gamma}, k}\colon CB(\hat{\Ac}\setminus V_\ell) \to \Bc(\Hc_{\ell}).$$ 
Due to the properties of $\nu_\ell$,  
and using~\eqref{newcor_proof_eq1} via \cite[rem. II, p. 474]{Fe62} 
which implies $\gamma_k\to\infty$ in $\Gamma_\ell$, 
the sequence $(\sigma_{\bar{\gamma}, k})_k$ satisfies all the properties in the statement. 
\end{proof}

\subsection{Fourier transforms of $C^*$-algebras with norm controlled dual limits}

\begin{definition}[see also \cite{LuRe15}]\label{dnormconspec} 
Let $\emptyset= S_{d+1}\subset S_d\subset\cdots  \subset S_0=S $ be a locally compact topological space of step $\leq d $. 
Choose  for every $i=1,\dots, d $  a Hilbert space $\Hc_i $ and assume that $\Hc_{d}=\CC $. 

 Let $ B^*(S) $ be the set of all operator fields $ \varphi $ defined over $ S $ such that
\begin{enumerate}
\item\label{dnormconspec_1} $ \varphi(\gamma)\in \Kc(\H_i) $ for every $ \gamma\in\Gamma_i= S_{i-1}\setminus S_{i}$, 
$i=1,\dots, d $.
\item \label{dnormconspec_2} The field $ \varphi $ is uniformly bounded, that is,  we have that
$$
 \norm{\varphi} =\sup\left\{\norm{\varphi(\gamma)}_{\Bc(\Hc_i)}\mid \gamma\in \Gamma_i, \, i= 1, \dots ,d\right\} <\infty.
$$
\item \label{dnormconspec_3} The mappings $ \gamma\to \varphi(\gamma) $ are norm continuous on the difference sets $ \Gamma_i $.
\item   \label{dnormconspec_4} We have for any sequence $ (\gamma_k)_{k\in\NN} \subset S$ going to infinity, 
that  
$$ \underset{k\to\infty}{\lim}\norm{\varphi(\gamma_k)}_{\text{op}}=0.$$
\item \label{dnormconspec_5}  For any  $ i=1,\dots, d+1$,   and for any 
converging sequence contained in $ \Gamma_i= S_{i-1}\setminus S_i$ with limit set  outside $ \Gamma_i $, 
there exists a properly converging sub-sequence $\overline{\gamma}=(\gamma_k)_{k\in\NN} $,  a constant  $ C>0 $ 
and for every $ k\in\NN $  an involutive  linear mapping $\tilde\sigma_{\overline{\gamma},k}\colon  CB(S_{i})\to \Bc(\Hc_i)$, 
which is  bounded by $ C\norm{.}_{S_{i}} $, such that
$$
\lim_{k\to\infty}\norm{\varphi (\gamma_k)-\tilde\sigma_{\overline\gamma,k} (\varphi\vert_{ S_{i}})}_{\Bc(\Hc_i)}=0.
$$
 \end{enumerate}
\end{definition}
 
\begin{theorem}[see also \cite{LuRe15}]\label{Th2.11} 
Let $ S $ be a locally compact topological space of step $\leq d $.  
Then the set $B^*(S) $ of Definition~\ref{dnormconspec} is  a closed involutive  subspace of $\ell^\infty(S) $.
Furthermore $B^*(S) $is a  $ C^*$-subalgebra of $ \ell^\infty(S)$ with spectrum $ S$
if and only if  
 all  the mappings $ \tilde \sigma_{\overline{\gamma},k}$ are almost homomorphisms, i.e.,  
$$
 \lim_{k\to\infty}\norm{\tilde \sigma_{\overline{\gamma},k}(\varphi\cdot\psi)-\tilde \sigma_{\overline{\gamma},k}
(\varphi)\cdot \tilde \sigma_{\overline{\gamma},k}(\psi)}_{\Bc(\Hc_i)}=0, \quad \varphi,\, \psi\in B^*(S),
$$
and the restrictions $ B^*(S)\vert_{S_{i-1}}$ contain the spaces $ C_0(\Gamma_i, \Hc_i)$, $i=1,\dots, d+1$.
\end{theorem}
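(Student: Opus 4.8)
The plan is to treat the two assertions separately. For the first assertion, that $B^*(S)$ is a closed involutive subspace of $\ell^\infty(S)$, I would first observe that conditions \eqref{dnormconspec_1}--\eqref{dnormconspec_4} visibly define an involutive subspace, and closedness of that subspace follows by a uniform-convergence argument of the kind used in Lemma~\ref{F1} and Lemma~\ref{F2}: a uniform limit of operator fields that are norm continuous on each $\Gamma_i$ is again norm continuous on each $\Gamma_i$, a uniform limit of fields vanishing at infinity still vanishes at infinity, and the pointwise membership in $\Kc(\Hc_i)$ is preserved since $\Kc(\Hc_i)$ is norm closed in $\Bc(\Hc_i)$. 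The only slightly delicate point is condition \eqref{dnormconspec_5}: here I would use that each $\tilde\sigma_{\overline\gamma,k}$ is bounded by $C\norm{\cdot}_{S_i}$ with a constant $C$ that is uniform in $k$, so that if $\varphi^{(m)}\to\varphi$ uniformly and each $\varphi^{(m)}$ satisfies \eqref{dnormconspec_5} for a common refining subsequence, then an $\varepsilon/3$ argument (choose $m$ large, then $k$ large) produces the required estimate for $\varphi$; one has to be a little careful about passing to sub-subsequences, but since \eqref{dnormconspec_5} only asserts the \emph{existence} of a properly converging subsequence, a diagonal extraction handles this.

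For the second assertion I would argue both implications. For the ``if'' direction, assume all the $\tilde\sigma_{\overline\gamma,k}$ are almost homomorphisms and the restrictions contain the $C_0(\Gamma_i,\Hc_i)$. To see $B^*(S)$ is a $C^*$-subalgebra, the only thing beyond the already-established closed involutive subspace structure is closure under products; given $\varphi,\psi\in B^*(S)$, properties \eqref{dnormconspec_1}--\eqref{dnormconspec_4} for $\varphi\cdot\psi$ are immediate, and property \eqref{dnormconspec_5} for $\varphi\cdot\psi$ is exactly where the almost-homomorphism hypothesis is used: $\tilde\sigma_{\overline\gamma,k}(\varphi\cdot\psi)$ is asymptotically $\tilde\sigma_{\overline\gamma,k}(\varphi)\cdot\tilde\sigma_{\overline\gamma,k}(\psi)$, and each factor is asymptotically $\varphi(\gamma_k)$ resp.\ $\psi(\gamma_k)$ by applying \eqref{dnormconspec_5} to $\varphi$ and to $\psi$, so the product $(\varphi\psi)(\gamma_k)$ has the required asymptotic description. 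Then to identify the spectrum with $S$, I would build the composition series of $B^*(S)$ from the ideals $\Jc_i$ consisting of fields supported on $S\setminus S_i$ (equivalently vanishing on $S_i$), show $\Jc_i/\Jc_{i+1}\simeq \Cc_0(\Gamma_{i+1},\Kc(\Hc_{i+1}))$ using that the restriction space contains $C_0(\Gamma_{i+1},\Hc_{i+1})$, and invoke Lemma~\ref{L1} together with Remark~\ref{S3} to reconstruct $\widehat{B^*(S)}=S$ from the spectra of the subquotients.

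For the ``only if'' direction, suppose $B^*(S)$ is a $C^*$-subalgebra of $\ell^\infty(S)$ with spectrum $S$. Closure under products forces, for $\varphi,\psi\in B^*(S)$, that $\varphi\cdot\psi$ again satisfies \eqref{dnormconspec_5}; comparing the asymptotic description of $(\varphi\psi)(\gamma_k)$ obtained this way with the product of the asymptotic descriptions of $\varphi(\gamma_k)$ and $\psi(\gamma_k)$ (using that $\tilde\sigma_{\overline\gamma,k}$ is linear and involutive and $B^*(S)$ separates enough fields) yields $\lim_k\norm{\tilde\sigma_{\overline\gamma,k}(\varphi\psi)-\tilde\sigma_{\overline\gamma,k}(\varphi)\cdot\tilde\sigma_{\overline\gamma,k}(\psi)}=0$, i.e.\ the $\tilde\sigma_{\overline\gamma,k}$ are almost homomorphisms. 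The requirement that $B^*(S)\vert_{S_{i-1}}$ contain $C_0(\Gamma_i,\Hc_i)$ then comes from the spectrum condition: since $\widehat{B^*(S)}=S$, the subquotient of $B^*(S)$ living over the locally closed piece $\Gamma_i$ must be all of $\Cc_0(\Gamma_i,\Kc(\Hc_i))$ (its spectrum is $\Gamma_i$ and the fiber representations are irreducible on $\Hc_i$), which forces the restriction space to be large enough to contain the rank-one, hence all of $C_0(\Gamma_i,\Hc_i)$.

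The main obstacle I anticipate is the bookkeeping in the ``spectrum equals $S$'' part of the ``if'' direction: one must verify that the ideals $\Jc_i$ really are \emph{all} the primitive ideals coming from points of $S_i$ and that no extra points appear in $\widehat{B^*(S)}$, which requires knowing that the almost-homomorphism property is exactly what prevents the quotient fields from generating new irreducible representations ``at infinity'' — this is precisely the mechanism packaged in Proposition~\ref{S1} and Theorem~\ref{S5}, so I would reduce to those by recognizing $B^*(S)$, level by level, as a pullback of the type appearing there. Making this identification clean, rather than re-deriving the extension theory, is where the real work lies.
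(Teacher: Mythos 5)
Your proposal is essentially correct, but in the key step --- identifying the spectrum of $B^*(S)$ with $S$ in the ``if'' direction --- you take a route that is organized differently from the paper's. The paper introduces the auxiliary algebras $B^*_i$ of fields on $S_{i-1}$ satisfying conditions \eqref{dnormconspec_1}--\eqref{dnormconspec_5}, proves by downward induction that $\widehat{B^*_i}=S_{i-1}$, and at each stage invokes the $C^*$-algebraic Stone--Weierstrass theorem \cite[Th. 11.1.8]{Dix64} to conclude that the restriction map is onto, i.e.\ $B^*(S)\vert_{S_{i-1}}=B^*_i$; the heart of the induction is the identification of the kernel $K_{i+1}$ of restriction to $S_i$ with $\Cc_0(\Gamma_i,\Kc(\Hc_i))$, obtained by applying condition \eqref{dnormconspec_5} to fields whose restriction to $S_i$ vanishes (so that $\Vert\varphi(\gamma_k)\Vert\to0$) together with the containment hypothesis. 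Your filtration of $B^*(S)$ by the ideals of fields vanishing on $S_i$, with subquotients $\Cc_0(\Gamma_{i+1},\Kc(\Hc_{i+1}))$ and the appeal to Lemma~\ref{L1} and Remark~\ref{S3}, rests on exactly the same kernel computation but bypasses the Stone--Weierstrass step, because you never need the surjectivity statement $B^*(S)\vert_{S_{i-1}}=B^*_i$; what you need instead is the standard fact that irreducible representations of an ideal extend uniquely, which is indeed available through the cited remarks. This is a legitimate streamlining; what the paper's version buys in exchange is the stronger intermediate conclusion that the restriction algebras exhaust $B^*_i$. Two smaller points: your care about condition \eqref{dnormconspec_5} passing to uniform limits is reasonable (the paper treats the maps $\tilde\sigma_{\overline\gamma,k}$ as fixed data and only cites \eqref{dnormconspec_1}--\eqref{dnormconspec_4} for closedness, in which case your $\varepsilon/3$ estimate with the uniform bound $C\norm{\cdot}_{S_i}$ suffices and no diagonal extraction is needed); and for the ``only if'' direction the paper offers only a one-line assertion, so your sketch is at least as complete, though your claim that the spectrum condition forces the subquotient over $\Gamma_i$ to be all of $\Cc_0(\Gamma_i,\Kc(\Hc_i))$ still requires the fullness of the fibers plus a Stone--Weierstrass (or Fell-type) argument, not merely irreducibility of the evaluations.
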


\begin{proof}
We easily see that the conditions \eqref{dnormconspec_1} -- \eqref{dnormconspec_4} in Definition~\ref{dnormconspec}
 imply that $ B^*(S) $ is a closed involution-invariant subspace 
of $\ell^\infty(S) $.

For $ i=1,\dots, d+1$,  let  $ B^*_i $ be the set of all operator fields defined over 
$ S_{i-1} $, satisfying conditions \eqref{dnormconspec_1}--\eqref{dnormconspec_5}    on the sets  $S_j$, $j=d,\dots, {i-1}$. 
Then obviously for every $ i$ the restriction $ B^*(S)\vert_{S_{i-1}}$ of the space $B^*(S)$ to $ S_{i-1}$ is contained in $ B^*_i$. 
and  $ B^*(S)$ is also an algebra and hence a $ C^*$-subalgebra of $ \ell^\infty(S)$. 

Let us show that the spectrum of $ B^*(S)$ can be identified with the space $ S$. 
Since $ C_0(\Gamma_i,\Hc_i)$ is contained in $ B^*_i$ for every $ i$, it follows that the representations 
$ \pi_s\colon \varphi\to \varphi(s)\in\Kc(\Hc_i)$ of $B^*(S) $ are irreducible.
It follows from the choice of $ \Hc_d$ and the properties of $B^*(S) $, that $ B^*_d=B^*(S)\vert_{S_d}=C_0(S_d)$. 

Suppose that  for some $ 0\le i<  d$ the spectrum of the algebra $ B^*_{i+1}$ is the space $ S_{i}$.
 Let $ \pi\in \widehat{B^*_i} $. 
 Consider the kernel $ K_{i+1} $ of  the restriction mapping $R_{i+1}$
from $ B^*_i $ into $ l^\infty(S_{i}) $. 
If $ \pi(K_{i+1})=\{0\} $, then we can consider $ \pi $ 
as being a representation of the quotient algebra $ B^*_i/K_{i+1} $. 
But the image $ \Bc^*(S)\vert_{S_{i}}$ of $ R_{i+1} $ is a $ C^*$-subalgebra of   $ B^*_{i+1} $, 
the spectrum of $ B^*_{i+1}$ is by assumption the set $ S_{i}$  and $S_{i}$ is also contained in 
the spectrum of the subalgebra $ \Bc^*(S)\vert_{S_{i}} $. 
Hence by the Stone-Weierstrass theorem 
\cite[Th. 11.1.8]{Dix64} the algebras $ B^*_{i+1}$ and $ B^*(S)\vert_{S_{i}}$ coincide. 
Hence $ \pi$ is an evaluation at a point in $ S_{i}$.

If $ \pi(K_{i+1})\ne\{0\} $, then we look at the restriction of $ \pi $ to this ideal. 
The elements in $ K_{i+1} $ are operator fields defined on $ S_{i-1} $ which are norm continuous, 
which go to 0 at infinity and by condition~\eqref{dnormconspec_5} in Definition~\ref{dnormconspec}, 
for any properly converging sequence 
$ \overline{\gamma}\subset \Gamma_i $ with limit  outside $ \Gamma_i $, for every $ \varphi\in K_{i+1} $, we have that
$$
\lim_{k\to\infty}\norm{\varphi(\gamma_k)}_{\Bc(\Hc_i)}
=
\lim_{k\to\infty}\norm{\varphi(\gamma_k)-\tilde\sigma_{\overline{\gamma},k}(\varphi\vert_{{S_{i}}})}_{\Bc(\Hc_i)}=0.
$$
 This shows that $ K_{i+1}\subset C_0(\Gamma_i,\Kc(\Hc_i)) $. 
 Since by condition \eqref{dnormconspec_1} in Definition~\ref{dnormconspec} 
we know that $ C_0(\Gamma_i,\Kc(\Hc_i)) \subset K_{i+1}$ it follows that $ C_0(\Gamma_i,\Kc(\Hc_i)) = K_{i+1}$ 
and then $ \pi$ is an evaluation at  an element $ s\in \Gamma_i$. 
 Finally the spectrum of $ B^*_i$ is the set $ S_{{i-1}}$.  
Therefore again by the aforementioned Stone-Weierstrass theorem the algebras $ B^*_i $ and  $B^*(S)\vert_{S_{i-1}}$  coincide. 

If $ B^*(S) $ is a $ C^* $-algebra, then obviously the conditions  of Definition~\ref{dnormconspec} are fulfilled. 
 \end{proof}

\begin{corollary}\label{disb}
Let $ \Ac $ be a $ C^* $-algebra with norm controlled dual limits. Then the 
Fourier transform of $ \Ac $ is the $ C^* $-algebra $ B^*(\widehat{\Ac}) $. 
 \end{corollary}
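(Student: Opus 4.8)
The plan is to identify $\Ran\Fc_{\Ac}$ with $B^*(\widehat{\Ac})$ by a double inclusion, using the structural description of $\Ran\Fc_{\Ac}$ that comes out of Theorem~\ref{S5} together with the norm-control information of Definition~\ref{norcontspec} (which $\Ac$ satisfies by Theorem~\ref{newcor}), and on the other hand the characterization of $B^*(\widehat{\Ac})$ and its spectrum furnished by Theorem~\ref{Th2.11}. First I would set up the composition series: since $\Ac$ is a separable liminary $C^*$-algebra with norm controlled dual limits, $\widehat{\Ac}$ is locally compact of step $\le d$ with the closed chain $\emptyset=S_{d+1}\subset S_d\subset\cdots\subset S_0=\widehat{\Ac}$, and we take $V_\ell:=\widehat{\Ac}\setminus S_{\ell}$ (reindexing so that $\Gamma_\ell=S_{\ell-1}\setminus S_\ell=V_\ell\setminus V_{\ell-1}$), with $\Jc_\ell$ the ideal of $\Ac$ with $\widehat{\Jc_\ell}=V_\ell$. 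Conditions \eqref{S5_item1} and \eqref{S5_item2} of Theorem~\ref{S5} hold: density of $\Gamma_\ell$ in $\widehat{\Ac}\setminus V_{\ell-1}$ is part of the step-$\le d$ hypothesis together with \eqref{nrocontspec_1}, and the norm-continuity of $\gamma\mapsto\pi_\gamma(a)$ on $\Gamma_\ell$ into $\Kc(\Hc_\ell)$ is \eqref{nrocontspec_1} of Definition~\ref{norcontspec}; the fact that the operators actually land in $\Kc(\Hc_\ell)$ follows because $\Ac$ is liminary.

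Next I would show $\Ran\Fc_{\Ac}\subseteq B^*(\widehat{\Ac})$ by checking the five defining conditions of Definition~\ref{dnormconspec} for $\varphi=\Fc_{\Ac}(a)$. Conditions \eqref{dnormconspec_1}--\eqref{dnormconspec_3} are immediate from liminarity, the definition of $\Fc_{\Ac}$, and \eqref{nrocontspec_1}. Condition \eqref{dnormconspec_4}, that $\norm{\Fc_\Ac(a)(\gamma_k)}\to 0$ for any sequence going to infinity in $S$, follows because $\Fc_{\Ac}(a)$ restricted to any $\Gamma_\ell$ lies in $\Cc_0(\Gamma_\ell,\Kc(\Hc_\ell))$ modulo the image of $\nu_\ell$, combined with the vanishing at infinity built into the $\Cc_0$-factors of the composition series; more precisely one reads this off inductively from \eqref{S5_eq1} in Theorem~\ref{S5}. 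Condition \eqref{dnormconspec_5}, the existence of the involutive bounded maps $\tilde\sigma_{\bar\gamma,k}$, is exactly the conclusion of Theorem~\ref{newcor}: for a properly convergent sequence $\bar\gamma=(\gamma_k)\subset\Gamma_\ell$ with limit set outside $\Gamma_\ell$, the maps $\sigma_{\bar\gamma,k}\colon CB(\widehat{\Ac}\setminus V_\ell)\to\Bc(\Hc_\ell)$ produced there are completely positive and completely contractive (hence in particular involutive and bounded by $1$), and they satisfy $\lim_k\norm{\Fc(a)(\gamma_k)-\sigma_{\bar\gamma,k}(\Fc(a)\vert_{\widehat{\Ac}\setminus V_\ell})}=0$. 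Since $\widehat{\Ac}\setminus V_\ell=S_\ell=S_{(\ell+1)-1}$, these are of the required form with $C=1$.

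For the reverse inclusion $B^*(\widehat{\Ac})\subseteq\Ran\Fc_{\Ac}$, I would invoke Theorem~\ref{Th2.11}: the maps $\tilde\sigma_{\bar\gamma,k}=\sigma_{\bar\gamma,k}$ are almost homomorphisms (because, from the proof of Theorem~\ref{newcor}, they are restrictions/extensions of the almost $*$-homomorphism $\nu_\ell$, whose values at the points $\gamma_k$ multiply correctly in the limit by the almost-multiplicativity of $\nu_\ell$ together with $\gamma_k\to\infty$ in $\Gamma_\ell$), and the restriction $B^*(\widehat{\Ac})\vert_{S_{i-1}}$ contains $\Cc_0(\Gamma_i,\Kc(\Hc_i))$ by construction; hence Theorem~\ref{Th2.11} gives that $B^*(\widehat{\Ac})$ is a $C^*$-subalgebra of $\ell^\infty(\widehat{\Ac})$ with spectrum $\widehat{\Ac}$. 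Now $\Ran\Fc_{\Ac}$ is a $C^*$-subalgebra of $B^*(\widehat{\Ac})$ (by the first inclusion), it is dense in the Stone--Weierstrass sense since it separates the points of $\widehat{\Ac}$ and its closure has the same spectrum $\widehat{\Ac}$ (each $\pi_\gamma$ is irreducible on it and distinct $\gamma$ give inequivalent representations, by the choice of a complete system of representatives), and it contains each $\Cc_0(\Gamma_\ell,\Kc(\Hc_\ell))$ since these are the successive subquotients realized by $\Fc_{\Ac}$. An induction up the composition series $S_d\subset S_{d-1}\subset\cdots\subset S_0$, exactly as in the proof of Theorem~\ref{Th2.11} but now applied to the subalgebra $\Ran\Fc_\Ac$, forces $\Ran\Fc_{\Ac}\vert_{S_{i-1}}=B^*(\widehat{\Ac})\vert_{S_{i-1}}$ at each stage by the noncommutative Stone--Weierstrass theorem \cite[Th. 11.1.8]{Dix64}, and at $i=1$ this yields $\Ran\Fc_{\Ac}=B^*(\widehat{\Ac})$.

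I expect the main obstacle to be the reverse inclusion — specifically, verifying cleanly that the \emph{abstractly} specified sections $\tilde\sigma_{\bar\gamma,k}$ appearing in the definition of $B^*(\widehat{\Ac})$ may be taken to be precisely the $\sigma_{\bar\gamma,k}$ coming from Theorem~\ref{newcor} (i.e. that the quantifier structure in Definition~\ref{dnormconspec}\eqref{dnormconspec_5} is compatible with what Theorem~\ref{newcor} delivers), and that the almost-homomorphism property needed to apply Theorem~\ref{Th2.11} genuinely holds for these maps. The bookkeeping of the reindexing between the $V_\ell$/$\Jc_\ell$ notation of Theorem~\ref{S5} and the $S_i$/$\Gamma_i$ notation of Definitions~\ref{lcd}--\ref{dnormconspec} also needs care but is routine; the substantive point is that once $B^*(\widehat{\Ac})$ is known (via Theorem~\ref{Th2.11}) to be a $C^*$-algebra with spectrum $\widehat{\Ac}$, the identification with $\Ran\Fc_{\Ac}$ is a Stone--Weierstrass argument carried out stratum by stratum along the composition series.
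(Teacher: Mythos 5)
Your proposal is correct and takes essentially the same approach as the paper, whose entire proof is the one-line citation ``Use Theorem~\ref{Th2.11}'': your two inclusions (range of $\Fc_{\Ac}$ inside $B^*(\widehat{\Ac})$ via Definition~\ref{norcontspec} and Theorem~\ref{newcor}, then equality via Theorem~\ref{Th2.11} and its built-in stratum-by-stratum Stone--Weierstrass induction) simply spell out what that citation leaves implicit. The quantifier and almost-homomorphism subtleties you flag are features of the paper's own formulation rather than gaps introduced by your argument.
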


\begin{proof}
Use Theorem~\ref{Th2.11}.
\end{proof}

\section{Application to $C^*$-algebras of nilpotent Lie groups}\label{section4}

We prove in this section that the $C^*$-algebras of nilpotent Lie groups are special solvable, and this allow us to 
use the results of the previous section to describe the image of their operator-valued Fourier transform.

Throughout this section we denote by $\gg$ any nilpotent Lie algebra with its corresponding Lie group $G=(\gg,\cdot)$ and 
with a fixed Jordan-H\"older sequence 
$$\{0\}=\gg_0\subseteq\cdots\subseteq\gg_m=\gg$$
and we pick $X_j\in\gg_j\setminus\gg_{j-1}$ for $j=1,\dots,m$. 
We denote by 
$$\langle\cdot,\cdot\rangle\colon\gg^*\times\gg\to\RR$$
the duality pairing between $\gg$ and its linear dual space~$\gg^*$, 
and for every subalgebra $\hg\subseteq\gg$ we define 
$$(\forall\xi\in\gg^*)\quad \hg(\xi):=\{X\in\hg\mid(\forall Y\in\hg)\  \langle\xi,[X,Y]\rangle=0\}.$$
We denote by $\Ec$ the set of all subsets of $\{1,\dots,m\}$ 
endowed with the total ordering defined for all $e_1\ne e_2$ in $\Ec$ by 
$$e_1\prec e_2\iff\min(e_1\setminus e_2)<\min(e_2\setminus e_1)$$
where we use the convention $\min\emptyset=\infty$, so in particular $\max\Ec=\emptyset$.  
This is precisely the ordering introduced in \cite[subsect. 3.4, p. 454]{Pe84}. 
We also endow the $m$-th Cartesian power $\Ec^m$ with the total ordering 
obtained from 
the above ordering $\prec$ 
as in~\cite[Def. 1.2.5]{Pe94}. 

\subsection{Coarse stratification of $\gg^*$ and continuity of trace on $\widehat{G}$}
We define the jump indices
$$(\forall \xi\in\gg^*)\quad J_\xi:=\{j\in\{1,\dots,m\}\mid \gg_j\not\subset\gg(\xi)+\gg_{j-1}\}$$
and 
$$(\forall e\in\Ec)\quad \Omega_e:=\{\xi\in\gg^*\mid J_\xi=e\}.$$
The \emph{coarse stratification of $\gg^*$} is the family $\{\Omega_e\}_{e\in\Ec}$, 
which is a finite partition of~$\gg^*$ consisting of $G$-invariant sets. 
For every coadjoint $G$-orbit $\Oc\in\gg^*/G$ we define $J_{\Oc}:=J_\xi$ for any $\xi\in\Oc$ and then 
\begin{equation}\label{coarse_eq1}
(\forall e\in\Ec)\quad \Xi_e:=\{\Oc\in\gg^*/G\mid J_{\Oc}=e\}.
\end{equation}

\begin{lemma}\label{coarse_lemma}
Assume the above setting and for every $\Oc\in\gg^*/G$ pick any unitary irreducible representation 
$\pi_{\Oc}\colon G\to \Bc(\Hc_{\Oc})$ which is associated with~$\Oc$ via Kirillov's correspondence. 
If we endow the space $\gg^*/G\simeq\widehat{G}$ with its canonical topology, 
then for every index set $e\in\Ec$ the following assertions hold: 
\begin{enumerate}
\item\label{coarse_lemma_item1} The relative topology of $\Xi_e\subseteq\gg^*/G$ is Hausdorff. 
\item\label{coarse_lemma_item2} For every test function $\phi\in\Cc_0^\infty(G)$ 
the function 
$$\Xi_e\to\CC,\quad \Oc\mapsto\Tr(\pi_{\Oc}(\phi)) $$
is well defined and continuous. 
\end{enumerate}
\end{lemma}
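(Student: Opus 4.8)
The plan is to reduce both assertions to the key fact that the jump-index sets $J_\xi$ stratify $\gg^*$ into locally closed $G$-invariant pieces on which one has canonical (e.g., Pedersen-type) cross-sections and real-analytic parametrizations of the coadjoint orbits. First I would recall from the theory of the fine/coarse stratification (following \cite{Pe84}, subsect.~3.4) that each $\Omega_e$ is a locally closed $G$-invariant subset of $\gg^*$ which is Zariski-constructible, that the union $\bigcup_{e'\succeq e}\Omega_{e'}$ is closed in $\gg^*$ for each $e$, and that the restriction of the orbit map $\gg^*\to\gg^*/G$ to $\Omega_e$ admits a continuous (indeed polynomial/rational) cross-section onto $\Xi_e$. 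From this, assertion~\eqref{coarse_lemma_item1} follows: the quotient topology on $\Xi_e$ agrees, via that cross-section, with the relative topology induced from a locally closed — hence locally compact Hausdorff — subset of some Euclidean space, so $\Xi_e$ is Hausdorff in the relative topology inherited from $\gg^*/G\simeq\widehat G$. (Equivalently, on $\Omega_e$ the functions $\xi\mapsto\gg(\xi)$ and the orbit $\Oc_\xi$ vary continuously, and two orbits in $\Xi_e$ that are not separated would have to meet, which is impossible.)

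For assertion~\eqref{coarse_lemma_item2} I would use the orbit-method formula for the distributional character: for $\phi\in\Cc_0^\infty(G)$ and $\Oc\in\gg^*/G$ one has $\Tr(\pi_\Oc(\phi))=\int_{\Oc}\widehat{\phi\circ\exp}(\xi)\,d\beta_\Oc(\xi)$, where $\beta_\Oc$ is the canonical Liouville measure on the coadjoint orbit (Kirillov's character formula; see also \cite{Pe84}). The point is that over $\Omega_e$ the orbits $\Oc_\xi$, together with their Liouville measures, can be parametrized smoothly: fixing the cross-section through $\Omega_e$ and using the global canonical symplectic (Darboux) coordinates on coadjoint orbits of nilpotent Lie groups — exactly the coordinates coming from the jump indices $e$, so that each orbit in $\Xi_e$ is diffeomorphic to $\RR^{|e|}$ with an explicit polynomial symplectic form — one writes $\Tr(\pi_\Oc(\phi))$ as an integral over a fixed $\RR^{|e|}$ of a function depending continuously (in fact smoothly) on the orbit parameter in $\Xi_e$ and Schwartz-class in the fiber variable. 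Since $\phi$ has compact support, the Fourier transform $\widehat{\phi\circ\exp}$ is Schwartz on $\gg^*$, the fiber integrals converge absolutely and uniformly on compacta of $\Xi_e$, and one concludes continuity by dominated convergence. In particular $\Tr(\pi_\Oc(\phi))$ is well defined (finite) because $\pi_\Oc(\phi)$ is trace-class for $\phi\in\Cc_0^\infty(G)$, a standard fact for nilpotent Lie groups.

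The main obstacle I anticipate is the second step: making the dependence of the Liouville measure $\beta_{\Oc_\xi}$ on the orbit parameter genuinely continuous (indeed uniformly controlled on compact subsets of $\Xi_e$), together with a uniform-in-$\Oc$ domination of the fiber integrand. This is where one must invoke the \emph{global} canonical coordinates on coadjoint orbits — the Pedersen coordinates attached to the jump set $e$ — rather than merely local Darboux charts; only on a single stratum $\Omega_e$ are these coordinates simultaneously valid for all orbits, which is precisely why the coarse stratification (not $\gg^*$ as a whole) is the right setting. Once the orbit $\Oc_\xi\cong\RR^{|e|}$ is identified with its symplectic coordinates polynomially in $\xi\in\Omega_e$, the pushforward of $\beta_{\Oc_\xi}$ is Lebesgue measure on $\RR^{|e|}$ up to a polynomial Jacobian that is locally bounded in $\xi$, and the integrand $\widehat{\phi\circ\exp}$ restricted to $\Oc_\xi$ is Schwartz with seminorms locally bounded in $\xi$; combining these yields both well-definedness and continuity, finishing the proof.
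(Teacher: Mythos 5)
Your proposal is correct and takes essentially the same route as the paper: the paper settles assertion (i) by citing \cite[Th. 3.1.14(iv)]{CG90}, which is precisely the homeomorphism of $\Xi_e$ onto an algebraic subset of the vector space $\spa\{X_j\mid j\notin e\}$ that your cross-section argument reconstructs, and settles assertion (ii) by citing \cite[Lemma 4.4.4]{Pe84}, whose content is the character-formula plus canonical-coordinates/Liouville-measure argument you sketch. In effect you are re-deriving the two cited results rather than deviating from the paper's approach, and your sketch contains no gap beyond the expected level of detail.
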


\begin{proof}
The first assertion follows as an application of \cite[Th. 3.1.14(iv)]{CG90} 
for the coadjoint action of $G$, 
which provides a homeomorphism of $\Xi_e$ onto a certain algebraic subset 
of the vector space $\spa\{X_j\mid j\in\{1,\dots,m\}\setminus e\}$. 

The second assertion is just~\cite[Lemma 4.4.4]{Pe84}. 
\end{proof}

\subsection{Fine stratification of $\gg^*$ and continuity of operator fields on $\widehat{G}$}

We define 
$$(\forall \xi\in\gg^*)(\forall k=1,\dots,m)\quad J_\xi^k:=\{j\in\{1,\dots,k\}\mid \gg_j\not\subset\gg_k(\xi)+\gg_{j-1}\}$$
and 
$$(\forall \varepsilon\in\Ec^m)\quad \Omega_\varepsilon:=\{\xi\in\gg^*\mid (J_\xi^1,\dots,J_\xi^m)=\varepsilon\}.$$
The \emph{fine stratification of $\gg^*$} is the family $\{\Omega_\varepsilon\}_{\varepsilon\in\Ec^m}$, 
which is again a finite partition of~$\gg^*$ consisting of $G$-invariant sets. 
For every coadjoint $G$-orbit $\Oc\in\gg^*/G$ we define $J_{\Oc}^k:=J_\xi^k$ for any $\xi\in\Oc$ and $k=1,\dots,m$, 
and then 
$$(\forall \varepsilon\in\Ec^m)\quad \Xi_\varepsilon:=\{\Oc\in\gg^*/G\mid (J_{\Oc}^1,\dots,J_{\Oc}^m)=\varepsilon\}.$$
For every $\varepsilon\in\Ec^m$ we also define $\Gamma_\varepsilon\subseteq\widehat{G}$ as the image of $\Xi_\varepsilon$ 
through Kirillov's correspondence $\gg^*/G\simeq\widehat{G}$, 
which is actually a homeomorphism. 

\begin{remark}\label{fine_rem}
\normalfont
Every stratum of the coarse stratification of $\gg^*$ 
is the disjoint union of a few strata of the fine stratification of~$\gg^*$. 
More precisely, since $\gg_m=\gg$, we have 
$$(\forall e\in\Ec)\quad \Omega_e=\{\xi\in\gg^* \mid J_\xi^m=e\}
=\bigsqcup_{\varepsilon\in\Ec^{m-1}\times\{e\}}\Omega_\varepsilon. $$
\qed
\end{remark}

\begin{lemma}\label{fine_lemma2}
For every $\varepsilon\in\Ec^m$ there exists a Hilbert space $\Hc_\varepsilon$ 
and every equivalence class of representations $\gamma\in\Gamma_\varepsilon$ 
has a representative $\pi_\gamma\colon G\to\Bc(\Hc_\varepsilon)$ 
with the property that 
 for every $a\in C^*(G)$ the map 
$\Pi_a\colon \Gamma_\varepsilon\to\Bc(\Hc_\varepsilon)$, $\gamma\mapsto\pi_\gamma(a)$, is norm continuous.
\end{lemma}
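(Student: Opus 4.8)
The plan is to build the representations $\pi_\gamma$ explicitly via the Lie-algebraic construction using canonical coordinates on coadjoint orbits, following Pedersen~\cite{Pe89}, rather than via induced representations, and then to verify the norm-continuity hypothesis of Lemma~\ref{F2}. Fix $\varepsilon\in\Ec^m$. The fine stratum $\Xi_\varepsilon$ has a built-in system of global canonical symplectic (Darboux) coordinates on the coadjoint orbits: the jump-index data $(J^1_\xi,\dots,J^m_\xi)$ being constant on $\Xi_\varepsilon$ forces the polynomial maps producing the Vergne polarizations and the coordinates $(p,q)$ on each orbit to depend rationally—indeed, on $\Xi_\varepsilon$, polynomially with nonvanishing denominators—on the orbit parameters. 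This gives a common model Hilbert space $\Hc_\varepsilon=L^2(\RR^{d})$, where $2d$ is the dimension of the generic orbit in the stratum, together with a family $\{\pi_\gamma\}_{\gamma\in\Gamma_\varepsilon}$ acting on this fixed space, whose generators (the operators $\de\pi_\gamma(X_j)$) are differential operators on $\RR^d$ with polynomial coefficients depending continuously—in fact real-analytically—on $\gamma$ ranging over a semialgebraic parameter set homeomorphic to $\Xi_\varepsilon$.

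Next I would reduce the continuity statement to a tractable dense subalgebra. Take $\Sc\subseteq C^*(G)$ to be (the image under $\Fc$ of) the convolution algebra of Schwartz functions, or more conveniently the functions of the form $\phi\in\Cc^\infty_c(G)$; this is a dense $*$-subalgebra of $C^*(G)$. For $\phi\in\Cc^\infty_c(G)$ and $\gamma\in\Gamma_\varepsilon$, the operator $\pi_\gamma(\phi)$ has a smooth, rapidly decreasing integral kernel on $\RR^d\times\RR^d$; this kernel is given by an explicit oscillatory integral in the canonical coordinates, whose phase and amplitude depend continuously on $\gamma$. From this one gets two facts: (i) for fixed $v_1,v_2$ in a total set (say, Hermite functions or $\Cc^\infty_c(\RR^d)$ functions) the matrix coefficient $\gamma\mapsto\langle\pi_\gamma(\phi)v_1,v_2\rangle$ is continuous—this is the hypothesis of Lemma~\ref{F1}; and (ii) $\pi_\gamma(\phi)$ is trace-class with $\Tr\pi_\gamma(\phi)=\int_{\RR^d}k_\gamma(x,x)\,\de x$ depending continuously on $\gamma$ (this is essentially the Kirillov trace/character formula, and continuity follows from dominated convergence using a uniform Schwartz-type bound on the kernel over compact subsets of the parameter space, plus uniform control at infinity to handle $\Sg_1$-norm continuity). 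Then Lemma~\ref{F2} immediately yields norm-continuity of $\Pi_a$ for all $a\in C^*(G)$, since $\Cc^\infty_c(G)$ already satisfies all the hypotheses, $\pi_\gamma(\Cc^\infty_c(G))\subseteq\Sg_1(\Hc_\varepsilon)$, and $\Gamma_\varepsilon$ is Hausdorff (one still needs $\Gamma_\varepsilon$ Hausdorff in its relative topology, which is the fine-stratification analogue of Lemma~\ref{coarse_lemma}\eqref{coarse_lemma_item1}, proved again via \cite[Th.~3.1.14]{CG90} applied to the jump-index data refined to all the subalgebras $\gg_k$).

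The main obstacle I expect is establishing that the canonical-coordinate construction really produces representations on a \emph{fixed} Hilbert space with generators depending continuously on $\gamma$ \emph{uniformly over $\Gamma_\varepsilon$}, and that the resulting kernels admit Schwartz-norm bounds locally uniform in $\gamma$ and adequate decay as $\gamma\to\infty$ within the stratum. The delicate point is that a priori the polarizing subalgebra and the identification of the orbit with $\RR^{2d}$ jump across strata; the whole purpose of passing to the \emph{fine} stratification (rather than the coarse one) is precisely to make these choices algebraically uniform on each piece, so that one stays on one model space and the coefficient functions of $\de\pi_\gamma(X_j)$ are rational with controlled denominators. Verifying the Schwartz estimates on $k_\gamma$ with the right dependence on $\gamma$—in particular the behavior needed so that $\gamma\mapsto\pi_\gamma(\phi)$ extends continuously and vanishes at infinity in operator norm (which is what the later $B^*$-algebra description requires)—is the technical heart; after that, Lemmas~\ref{F1} and~\ref{F2} do the rest mechanically.
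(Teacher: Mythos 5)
Your proposal is correct and follows essentially the same route as the paper: both arguments reduce the lemma to Lemma~\ref{F2} with $\Sc=\Cc^\infty_0(G)$, after securing (a) a fixed model space $\Hc_\varepsilon=L^2(\RR^d)$ carrying representatives $\pi_\gamma$ whose matrix coefficients against a total set of test vectors depend continuously on $\gamma$, and (b) continuity of $\gamma\mapsto\Tr\pi_\gamma(\phi)$ on the stratum. The only real difference is that what you single out as the technical heart (uniformity of the canonical-coordinate construction over $\Gamma_\varepsilon$ and the locally uniform Schwartz kernel estimates) is not re-derived in the paper: point (a) is exactly \cite[Cor.~2.15]{LiRo96}, and point (b) is \cite[Lemma~4.4.4]{Pe84}, i.e.\ Lemma~\ref{coarse_lemma}\eqref{coarse_lemma_item2}, applied on the coarse stratum containing $\Gamma_\varepsilon$; likewise the Hausdorffness of $\Gamma_\varepsilon$ comes from Lemma~\ref{coarse_lemma}\eqref{coarse_lemma_item1} via Remark~\ref{fine_rem} rather than a refined application of \cite{CG90}. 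Note also that the $\Sg_1$-norm continuity and the vanishing of $\Vert\pi_\gamma(\phi)\Vert$ at infinity that you mention are not needed here: Lemma~\ref{F2} only requires continuity of the scalar trace on $\Sc$ together with the weak continuity of Lemma~\ref{F1}.
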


\begin{proof}
By using \cite[Cor. 2.15]{LiRo96}, 
we obtain a Hilbert space $\Hc_\varepsilon=L^2(\RR^d)$ 
and for every $\gamma\in\Gamma_\varepsilon$ 
a representative $\pi_\gamma\colon G\to\Bc(\Hc_\varepsilon)$ 
with the property that 
for all $\phi\in\Cc^\infty_0(G)$, $f_1\in\Cc^\infty_0(\RR^d)$, and $f_2\in\Sc(\RR^d)$ with $\widehat{f}_2\in\Cc^\infty_0(\RR^d)$ 
the function 
$\Gamma_\varepsilon\to\CC$, $\gamma\mapsto\langle\pi_\gamma(\phi)f_1,f_2\rangle$ 
is continuous. 

On the other hand, it follows by Lemma~\ref{coarse_lemma}\eqref{coarse_lemma_item2} 
that for every $\phi\in\Cc^\infty_0(G)$ the function $\Gamma_\varepsilon\to\CC$, $\gamma\mapsto\Tr\pi_\gamma(\varphi)$, 
is continuous. 
Hence we may use Lemma~\ref{F2} with $\Sc=\Cc^\infty_0(G)$ to obtain the conclusion. 
\end{proof}

\begin{lemma}\label{fine_lemma1}
For every $\varepsilon\in\Ec^m$ 
the set $\Xi_\varepsilon$ is dense and  open  in  $(\gg^*/G)\setminus\bigcup\limits_{\Ec^m\ni\delta\prec\varepsilon}\Xi_\delta$.
% and $\Xi_\varepsilon \in \opn{Sep}((\gg^*/G) \setminus\bigcup\limits_{\Ec^m\ni\delta\prec\varepsilon}\Xi_\delta)$.
 \end{lemma}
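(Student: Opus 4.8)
The plan is to deduce the statement from the known coarse-stratification facts and a straightforward double-induction bookkeeping. First I would recall that the fine stratum $\Xi_\varepsilon$ is a subset of a single coarse stratum: writing $\varepsilon=(\varepsilon_1,\dots,\varepsilon_m)$, Remark~\ref{fine_rem} gives $\Xi_\varepsilon\subseteq\Xi_{\varepsilon_m}$, and the coarse stratum $\Xi_{\varepsilon_m}$ is (by Lemma~\ref{coarse_lemma}\eqref{coarse_lemma_item1} and the proof via \cite[Th.~3.1.14(iv)]{CG90}) homeomorphic to a locally closed algebraic subset of the vector space $\spa\{X_j\mid j\notin\varepsilon_m\}$. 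Inside that coarse stratum, the parameters $J^1_\xi,\dots,J^{m-1}_\xi$ are polynomial-type data (jump indices of the skew forms $\langle\xi,[\cdot,\cdot]\rangle$ restricted to the flags $\gg_k$), so the refinement of $\Xi_{\varepsilon_m}$ into the fine strata $\Xi_\delta$ with $\delta_m=\varepsilon_m$ is again a finite partition into locally closed semi-algebraic pieces. The key point to extract is: on $\Xi_{\varepsilon_m}$ the maximal-rank conditions defining $\Xi_\varepsilon$ (i.e.\ $\gg_j\not\subset\gg_k(\xi)+\gg_{j-1}$ for $j\in\varepsilon_k$) are Zariski-open conditions, while the minimal-rank conditions ($\gg_j\subset\gg_k(\xi)+\gg_{j-1}$ for $j\notin\varepsilon_k$, $j\le k$) are Zariski-closed; hence $\Xi_\varepsilon$ is locally closed in $\gg^*/G$, and it is \emph{open} in the closed set cut out by the minimal-rank equations.

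Next I would set up the ordering carefully. The ordering $\prec$ on $\Ec^m$ is designed (following \cite[Def.~1.2.5]{Pe94}) precisely so that if $\delta\prec\varepsilon$ then $\Xi_\delta$ lies in the closure-direction: concretely, $\delta\prec\varepsilon$ forces, at the first coordinate $k$ where they differ, that $\varepsilon_k$ has a strictly smaller minimal element of the symmetric difference, i.e.\ $\varepsilon$ imposes a \emph{stronger} maximal-rank (generic) condition at that level. The upshot I want is the inclusion
\[
\Big(\gg^*/G\Big)\setminus\bigcup_{\delta\prec\varepsilon}\Xi_\delta
=\bigcup_{\eta\succeq\varepsilon}\Xi_\eta
\subseteq\{\Oc\mid \text{the rank of }\langle\xi,[\cdot,\cdot]\rangle|_{\gg_k}\text{ is }\le\#\varepsilon_k\text{ for each }k\},
\]
the right-hand set being Zariski-closed. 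Then $\Xi_\varepsilon$ is exactly the locus inside this closed set where all those ranks are \emph{equal} to $\#\varepsilon_k$, which is a relatively Zariski-open (in particular relatively dense) subset. Density then follows because a nonempty Zariski-open subset of an irreducible variety is dense, after reducing to irreducible components; more robustly, one argues that any orbit in $\bigcup_{\eta\succeq\varepsilon}\Xi_\eta$ can be approximated by orbits in $\Xi_\varepsilon$ using the $\RR$-action (scaling) together with the fact that the maximal-rank stratum of a family of skew forms is dense in the whole family—this is the standard semicontinuity-of-rank argument, applied flag by flag.

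The main obstacle, I expect, is not any single deep theorem but the combinatorial verification that $\prec$ really matches the closure order coordinate-by-coordinate across all $m$ levels simultaneously — that is, checking that ``$\delta\prec\varepsilon$'' is equivalent to ``$\varepsilon$ imposes at least as strong a genericity condition as $\delta$ at the first level of disagreement, with no interference from later levels.'' This is exactly the content for which \cite{Pe94, Pe84} introduced these orderings, so I would lean on their lemmas: cite \cite[subsect.~3.4]{Pe84} for the one-level statement (which is essentially Lemma~\ref{coarse_lemma}) and \cite[Ch.~1]{Pe94} for the iterated version, then assemble. A clean way to organize the writeup is: (1) fix $\varepsilon$ and let $C_\varepsilon$ be the Zariski-closed subset of $\gg^*$ defined by the rank inequalities $\le\#\varepsilon_k$ at each level $k$; show $C_\varepsilon$ is $G$-invariant and its image $\overline{C}_\varepsilon$ in $\gg^*/G$ satisfies $\overline{C}_\varepsilon=\bigcup_{\eta\succeq\varepsilon}\Xi_\eta$ — here the ``$\supseteq$'' is immediate and the ``$\subseteq$'' uses the ordering lemma from \cite{Pe94}; (2) observe $\Xi_\varepsilon=\overline{C}_\varepsilon\setminus C'_\varepsilon$ where $C'_\varepsilon$ is the proper closed subset where some rank drops further, hence $\Xi_\varepsilon$ is relatively open in $\overline{C}_\varepsilon$; (3) for density, pick $\Oc\in\overline{C}_\varepsilon$ with representative $\xi$; perturbing $\xi$ within $C_\varepsilon$ (which one can do since the rank conditions $\le\#\varepsilon_k$ define a variety on which the maximal-rank locus is dense) yields nearby $\xi'$ with all ranks exactly $\#\varepsilon_k$, so $\Oc_{\xi'}\in\Xi_\varepsilon$ and $\Oc_{\xi'}\to\Oc$ in $\gg^*/G$, using continuity of the quotient map. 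This gives both openness and density in $(\gg^*/G)\setminus\bigcup_{\delta\prec\varepsilon}\Xi_\delta$.
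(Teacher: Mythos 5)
There is a genuine gap: your argument replaces the ordering $\prec$ by rank comparisons, and the two do not match. The fine stratum $\Xi_\varepsilon$ records the actual jump sets $J^k_\xi=\varepsilon_k$, not merely the ranks $\card\varepsilon_k$ of the forms $\langle\xi,[\cdot,\cdot]\rangle\vert_{\gg_k\times\gg_k}$. Two different jump sets of equal cardinality at some level give distinct fine strata with identical ranks, so already your step (2) fails: the locus in $\overline{C}_\varepsilon$ where all ranks equal $\card\varepsilon_k$ is in general strictly larger than $\Xi_\varepsilon$. Moreover, $\prec$ is a lexicographic comparison of the \emph{elements} of the index sets, not of their sizes: for instance $\{2,3\}\prec\{3,4,5,6\}$, so a tuple $\eta\succ\varepsilon$ may have $\card\eta_{k}>\card\varepsilon_{k}$ at the first level where they differ. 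Hence the inclusion you call ``immediate'', $\bigcup_{\eta\succeq\varepsilon}\Xi_\eta\subseteq\overline{C}_\varepsilon$, is unjustified, and the reverse inclusion fails as well (a point with ranks $\le\card\varepsilon_k$ can have a $\prec$-smaller jump tuple, e.g.\ $\{2,3\}\prec\{3,4\}$ with equal ranks). With the identity $\bigcup_{\eta\succeq\varepsilon}\Xi_\eta=\overline{C}_\varepsilon$ gone, the density-by-perturbation step collapses too: which stratum is dense in the complement of the $\prec$-earlier ones is governed by the non-vanishing of the particular minors/Pfaffians that drive the greedy selection of jump indices, not by rank semicontinuity, and the reduction ``maximal-rank locus is dense after passing to irreducible components'' would in any case miss components on which the rank is everywhere smaller.

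The statement you are trying to reprove by hand is exactly Pedersen's, and the paper's proof simply invokes it: by \cite[Prop.\ 1.3.2]{Pe94}, $\Omega_\varepsilon$ is Zariski-open, hence open and dense, in $\gg^*\setminus\bigcup_{\Ec^m\ni\delta\prec\varepsilon}\Omega_\delta$; since all these sets are $G$-invariant and the quotient map $\gg^*\to\gg^*/G$ is open (\cite[Ch.\ 1, \S 5, no.\ 2, Ex.\ 1]{Bo07}), the conclusion descends to $\Xi_\varepsilon$ in $(\gg^*/G)\setminus\bigcup_{\Ec^m\ni\delta\prec\varepsilon}\Xi_\delta$. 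If you want to keep your outline, the fix is to cite (or reprove) that proposition, replacing your rank-inequality set $C_\varepsilon$ by the correct Zariski conditions attached to the ordering; note also that to get openness of $\Xi_\varepsilon$ in the quotient you need openness of the quotient map together with $G$-saturation of the sets, not just continuity.
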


\begin{proof}
If $e\in\Ec$ is the set for which $\varepsilon\in\Ec^{m-1}\times\{e\}$, 
then Remark~\ref{fine_rem} implies in particular that $\Xi_\varepsilon\subseteq\Xi_e$. 
It then follows by Lemma~\ref{coarse_lemma}\eqref{coarse_lemma_item1} that the relative topology of $\Xi_\varepsilon$ is Hausdorff. 

On the other hand, it 
%easily 
follows by 
%\cite[Cor. 3.4.2]{Pe84} 
\cite[Prop. 1.3.2]{Pe94} 
that $\Omega_\varepsilon$ 
is a Zariski-open subset of $\gg^*\setminus\bigcup\limits_{\Ec^m\ni\delta\prec\varepsilon}\Omega_\delta$. 
Since the quotient map $\gg^*\to\gg^*/G$ is an open map 
(see \cite[Ch. 1, \S 5, no. 2, Ex. 1]{Bo07}), 
it then follows that $\Xi_\varepsilon$ is a dense open subset of $(\gg^*/G)\setminus\bigcup\limits_{\Ec^m\ni\delta\prec\varepsilon}\Xi_\delta$.
%Moreover, recall that all points of
%$\gg^*/G\simeq\widehat{G}$ are closed
 %              by \cite[4.7.15]{Dix64} since $C^*(G)$ is liminary.
 %             In addition, the function
%$[\pi]\mapsto\Vert\pi(a)\Vert$ is continuous
 %              on $\Gamma_\varepsilon$ for every $a\in\ C^*(G)$ by
%Lemma~\ref{fine_lemma2}.
 %              This implies that every point of the open set
%$\Xi_\varepsilon$ is separated in
 %              $\gg^*/G$ (see \cite[pag. 116, (iii)]{Dix61} or the proof of 
%\cite[Th. 2.1]{Fe60}).  
\end{proof}

\subsection{Image of the Fourier transform of $\Ac=C^*(G)$} 

\begin{proposition}\label{prop3.5}
Define the sequence 
$$\emptyset=V_0\subsetneqq V_1\subsetneqq\cdots\subsetneqq V_n=\widehat{G} $$
by the conditions
\begin{itemize}
\item $\{\emptyset\}\cup\{V_j\setminus V_{j-1}\mid 1\le j\le n\}=\{\Gamma_\varepsilon\mid\varepsilon\in\Ec^m\}$; 
\item if $j_1,j_2\in\{1,\dots,n\}$ and $\varepsilon_1,\varepsilon_2\in\Ec^m$ 
with $V_{j_1}\setminus V_{j_1-1}=\Gamma_{\varepsilon_1}$ and  $V_{j_2}\setminus V_{j_2-1}=\Gamma_{\varepsilon_2}$, 
then we have $j_1<j_2$ if and only if $\varepsilon_1\prec\varepsilon_2$. 
\end{itemize}
Then $V_j$ is an open subset of $\widehat{G}$ for all $j=1,\dots,n$, 
and both hypotheses \eqref{S5_item1}--\eqref{S5_item2} of Theorem~\ref{S5} are satisfied. 
\end{proposition}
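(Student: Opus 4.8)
The plan is to verify directly that the sequence $\{V_j\}$ constructed by sorting the fine-stratification pieces $\{\Gamma_\varepsilon\}_{\varepsilon\in\Ec^m}$ in increasing $\prec$-order yields an increasing chain of \emph{open} subsets of $\widehat G$ for which conditions \eqref{S5_item1} (density of the successive difference) and \eqref{S5_item2} (existence of a norm-continuous system of representatives on the difference) of Theorem~\ref{S5} hold. The point is that essentially all of the content has been isolated into Lemmas~\ref{fine_lemma1} and \ref{fine_lemma2}; what remains is bookkeeping about the ordered stratification.

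First I would establish that each $V_j$ is open. By construction $V_j=\bigcup_{\varepsilon\preceq\varepsilon_j}\Gamma_\varepsilon$, where $\varepsilon_j$ is the $j$-th element of $\Ec^m$ in $\prec$-order and the empty strata are simply skipped. Equivalently, writing $V_j'=\bigcup_{\delta\prec\varepsilon_j}\Gamma_\delta$, one has $V_j=V_j'\cup\Gamma_{\varepsilon_j}$ and I claim each such set is open. This is where Lemma~\ref{fine_lemma1} enters: it says $\Xi_\varepsilon$ (hence $\Gamma_\varepsilon$ under the Kirillov homeomorphism $\gg^*/G\simeq\widehat G$) is open in $(\gg^*/G)\setminus\bigcup_{\delta\prec\varepsilon}\Xi_\delta$, i.e. open in the complement of $V_j'$ inside $\widehat G$. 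I would run an induction on $j$: assuming $V_{j-1}$ is open (so its complement is closed), note $V_j'=V_{j-1}$ or $V_j'=V_{j-1}$ together with possibly intervening empty strata, so $V_j'$ is open; then $\Gamma_{\varepsilon_j}$ open in $\widehat G\setminus V_j'$ plus $V_j'$ open gives $V_j=V_j'\cup\Gamma_{\varepsilon_j}$ open. (The base case $V_0=\emptyset$ is trivial, and one also uses that $\Ec^m$ is finite and totally ordered, so the indexing is unambiguous and exhausts $\widehat G$ since $\{\Omega_\varepsilon\}$ partitions $\gg^*$.) Along the way this also shows $\Gamma_j:=V_j\setminus V_{j-1}=\Gamma_{\varepsilon_j}$.

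Condition \eqref{S5_item1} is then immediate: $\widehat G\setminus V_{j-1}=\widehat G\setminus\bigcup_{\delta\prec\varepsilon_j}\Xi_\delta$ (modulo the Kirillov identification), and Lemma~\ref{fine_lemma1} asserts precisely that $\Xi_{\varepsilon_j}=\Gamma_j$ is dense therein. Condition \eqref{S5_item2} is Lemma~\ref{fine_lemma2} applied to $\varepsilon=\varepsilon_j$: it furnishes a Hilbert space $\Hc_{\varepsilon_j}$ and a complete system of distinct representatives $\{\pi_\gamma\colon G\to\Bc(\Hc_{\varepsilon_j})\}_{\gamma\in\Gamma_{\varepsilon_j}}$, extended to $C^*(G)$, with $\gamma\mapsto\pi_\gamma(a)$ norm continuous for every $a\in C^*(G)$; since each $\pi_\gamma$ is irreducible and $C^*(G)$ is liminary (nilpotent Lie groups have type~I, indeed CCR, group $C^*$-algebras), $\pi_\gamma(a)\in\Kc(\Hc_{\varepsilon_j})$, so continuity takes values in $\Kc(\Hc_{\varepsilon_j})$ as required. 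One should also record that $C^*(G)$ is separable and liminary, which are standard facts for connected simply connected nilpotent Lie groups, so that the hypotheses of Theorem~\ref{S5} are met in full.

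The main obstacle, such as it is, is purely organizational rather than deep: one must be careful that the ``skip the empty strata'' reindexing does not disturb the openness induction — i.e. that inserting a finite run of empty $\Gamma_\varepsilon$'s between $\varepsilon_{j-1}$ and $\varepsilon_j$ in the $\prec$-order changes neither $V_{j-1}$ nor the set $\widehat G\setminus V_{j-1}$ relative to which Lemma~\ref{fine_lemma1} is stated. This is handled by observing that $\bigcup_{\delta\prec\varepsilon_j}\Gamma_\delta=\bigcup_{\delta\preceq\varepsilon_{j-1}}\Gamma_\delta=V_{j-1}$ since the empty strata contribute nothing, so the complement against which density and openness are measured is exactly $\widehat G\setminus V_{j-1}$, matching the statement of Theorem~\ref{S5}. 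With that remark in place the proof is a short assembly of the two lemmas and the induction on $j$.
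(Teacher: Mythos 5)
Your proposal is correct and follows essentially the same route as the paper: openness of the $V_j$ and the density condition \eqref{S5_item1} are deduced from Lemma~\ref{fine_lemma1} (via the Kirillov homeomorphism), and condition \eqref{S5_item2} from Lemma~\ref{fine_lemma2}. The only difference is that you spell out the openness induction and the handling of empty strata, which the paper leaves implicit.
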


\begin{proof}
If $j=1,\dots,n$, then $V_j$ is an open subset of $\widehat{G}$ as a consequence of Lemma~\ref{fine_lemma1}, 
since Kirillov's correspondence $\gg^*/G\simeq\widehat{G}$ is a homeomorphism. 

Moreover, to see that the other hypotheses of Theorem~\ref{S5} 
are satisfied, we just apply Lemma~\ref{fine_lemma1} again (for hypothesis~\eqref{S5_item1}), 
and Lemma~\ref{fine_lemma2} (for hypothesis~\eqref{S5_item2}). 
This completes the proof. 
\end{proof}

We have thus obtained the next theorem. 

 \begin{theorem}\label{nilpoare}
Let $G $ be  connected simply connected nilpotent Lie group.
Then the $C^*$-algebra $C^*(G) $ has norm controlled dual limits.
 \end{theorem}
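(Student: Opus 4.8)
The plan is to assemble Theorem~\ref{nilpoare} directly from the abstract machinery developed in Section~\ref{section3} together with the stratification results just proved in Section~\ref{section4}. The statement asserts that $\Ac := C^*(G)$ has norm controlled dual limits in the sense of Definition~\ref{norcontspec}, and by Theorem~\ref{newcor} this follows the moment we know that $\Ac$ satisfies the hypotheses of Theorem~\ref{S5}. So the entire proof reduces to: (i) recalling that $C^*(G)$ is separable and liminary for a connected simply connected nilpotent Lie group $G$, and (ii) producing the increasing chain of open subsets $\emptyset = V_0 \subseteq V_1 \subseteq \cdots \subseteq V_n = \widehat{G}$ of the spectrum together with the associated ideals $\Jc_\ell$ and Hilbert spaces $\Hc_\ell$ that make conditions \eqref{S5_item1}--\eqref{S5_item2} of Theorem~\ref{S5} hold.

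First I would invoke the standard facts that $C^*(G)$ is separable (since $G$ is second countable) and liminary (a theorem of Dixmier for connected nilpotent Lie groups), so that all points of $\widehat{G}$ are closed and the abstract framework of Section~\ref{section3} applies. Then the heart of the matter is Proposition~\ref{prop3.5}: the fine stratification $\{\Omega_\varepsilon\}_{\varepsilon \in \Ec^m}$ of $\gg^*$ descends through Kirillov's homeomorphism $\gg^*/G \simeq \widehat{G}$ to a partition $\{\Gamma_\varepsilon\}_{\varepsilon \in \Ec^m}$ of $\widehat{G}$, and enumerating the (nonempty) strata in the total order $\prec$ on $\Ec^m$ produces exactly the chain $V_0 \subsetneq V_1 \subsetneq \cdots \subsetneq V_n$ with each $V_\ell \setminus V_{\ell-1}$ equal to one stratum $\Gamma_{\varepsilon_\ell}$. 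Proposition~\ref{prop3.5} already records that each $V_j$ is open and that hypotheses~\eqref{S5_item1}--\eqref{S5_item2} are verified: the density in condition~\eqref{S5_item1} comes from Lemma~\ref{fine_lemma1} (each $\Xi_\varepsilon$ is dense and open in the complement of the union of strictly smaller strata), and the norm-continuous field of representatives in condition~\eqref{S5_item2} comes from Lemma~\ref{fine_lemma2} (built in turn from the matrix-coefficient continuity of \cite[Cor.~2.15]{LiRo96} combined with the trace-continuity of Lemma~\ref{coarse_lemma}\eqref{coarse_lemma_item2}, fed into Lemma~\ref{F2}).

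With Proposition~\ref{prop3.5} in hand, Theorem~\ref{S5} applies to $\Ac = C^*(G)$, and then Theorem~\ref{newcor} says verbatim that $\Ac$ has norm controlled dual limits: for each $\ell$ and each properly convergent sequence $\bar\gamma = (\gamma_k)_k$ in $\Gamma_\ell$ with limit set in $\widehat{G} \setminus \Gamma_\ell$, the maps $\sigma_{\bar\gamma,k} = (\nu_\ell(\cdot))(\gamma_k)$ extended via Arveson's extension theorem to $CB(\widehat{\Ac}\setminus V_\ell)$ furnish the required bounded involutive (indeed completely positive, completely contractive) maps with $\lim_k \norm{\Fc(a)(\gamma_k) - \sigma_{\bar\gamma,k}(\Fc(a)|_{\widehat{\Ac}\setminus V_\ell})}_{\Bc(\Hc_\ell)} = 0$ for all $a \in \Ac$. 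Matching the indexing conventions of Definition~\ref{norcontspec} (where the strata are indexed $i = 0,\dots,d$ with $S_d$ the characters) against those of Theorem~\ref{S5} is a purely bookkeeping step, using that $\dim \Hc_n = 1$ so the last stratum $\Gamma_n$ is exactly the set of characters. So the proof is essentially a one-line citation chain, and I would write it as such.

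The only genuine content has in fact already been discharged in the preceding lemmas, so there is no real obstacle left in Theorem~\ref{nilpoare} itself; the difficulty, if any, lies upstream. The one point worth stating carefully is the reconciliation of the two stratification indexings and the verification that the top stratum $\Gamma_n$ coincides with the vector space of characters $[\gg,\gg]^\perp$ (so that $\dim\Hc_n = 1$ and Definition~\ref{solvspecl_def}\eqref{solvspecl_def_item2} is met) — but this is immediate from Remark~\ref{fine_rem} and the fact that $\varepsilon = (\emptyset,\dots,\emptyset)$ is the maximum of $(\Ec^m,\prec)$, corresponding to the open dense stratum of generic orbits being handled first and the singleton orbits (characters) last. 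Thus the proof of Theorem~\ref{nilpoare} is: combine Proposition~\ref{prop3.5} with Theorem~\ref{newcor}.

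\begin{proof}[Proof of Theorem~\ref{nilpoare}]
Since $G$ is a connected simply connected nilpotent Lie group, the $C^*$-algebra $\Ac := C^*(G)$ is separable and liminary. By Proposition~\ref{prop3.5} there is an increasing sequence of open subsets $\emptyset = V_0 \subsetneq V_1 \subsetneq \cdots \subsetneq V_n = \widehat{G}$ of the spectrum, obtained by enumerating the nonempty strata $\{\Gamma_\varepsilon \mid \varepsilon \in \Ec^m\}$ of the fine stratification in the order $\prec$, such that the hypotheses \eqref{S5_item1}--\eqref{S5_item2} of Theorem~\ref{S5} are satisfied; here each $\Hc_\ell$ is the Hilbert space furnished by Lemma~\ref{fine_lemma2}, and $\dim\Hc_n = 1$ since $\Gamma_n$, corresponding to the largest index tuple $\varepsilon = (\emptyset,\dots,\emptyset)$, is precisely the set of characters of $\Ac$ (cf. Remark~\ref{fine_rem}). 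Consequently $\Ac$ satisfies the conditions of Theorem~\ref{S5}, and Theorem~\ref{newcor} applies: for every $1 \le \ell \le n$ and every properly convergent sequence $\bar\gamma = (\gamma_k)_k$ in $\Gamma_\ell$ with limit set outside $\Gamma_\ell$, there is a sequence $(\sigma_{\bar\gamma,k})_k$ of completely positive completely contractive maps $\sigma_{\bar\gamma,k}\colon CB(\widehat{\Ac}\setminus V_\ell) \to \Bc(\Hc_\ell)$ with
$$(\forall a\in\Ac)\quad \lim_{k\to\infty}\norm{\Fc(a)(\gamma_k) - \sigma_{\bar\gamma,k}(\Fc(a)\vert_{\widehat{\Ac}\setminus V_\ell})}_{\Bc(\Hc_\ell)} = 0.$$
Together with the norm-continuity of $\gamma \mapsto \Fc(a)(\gamma)$ on each $\Gamma_\ell$ provided by Lemma~\ref{fine_lemma2}, this is exactly the assertion that $\Ac$ has norm controlled dual limits in the sense of Definition~\ref{norcontspec}, where the indexing of the strata in Definition~\ref{norcontspec} is matched to that of Theorem~\ref{S5} in the evident way and $S_d = \Gamma_n$ is the set of characters.
\end{proof}
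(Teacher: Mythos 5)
Your proposal is correct and follows exactly the paper's own route: the paper proves Theorem~\ref{nilpoare} precisely by combining Proposition~\ref{prop3.5}, Theorem~\ref{S5}, and Theorem~\ref{newcor}, which is the citation chain you assemble. The extra remarks on separability/liminarity of $C^*(G)$ and on matching the indexing of Definition~\ref{norcontspec} are harmless elaborations of the same argument.
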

 
\begin{proof}
The theorem follows from Proposition~\ref{prop3.5}, Theorem~\ref{S5}, and Theorem~\ref{newcor}.
\end{proof}

\subsection{The index of a nilpotent Lie group}

\begin{definition}
\normalfont
Let $G$ be any simply connected Lie group with its Lie algebra $\gg$. 
The \emph{index} of $G$ (respectively, of $\gg$) is defined as 
$$\ind G=\ind\gg:=\dim\gg-\max_{\xi\in\gg^*}\dim\Ad^*_G(G)\xi.$$
\end{definition}

\begin{remark}\label{ind_rem}
\normalfont
For every $\xi\in\gg^*$, the coadjoint action gives rise to a $G$-equivariant diffeomorphism 
$G/G(\xi)\simeq\Ad^*_G(G)\xi$, hence $\dim\Ad^*_G(G)\xi=\dim\gg-\dim G(\xi)$ 
and it follows that 
$$\ind G=\ind\gg=\min_{\xi\in\gg^*}\dim G(\xi).$$
If $Z$ is the center of $G$, then $G(\xi)\supseteq Z$ for every $\xi\in\gg^*$, hence $\ind G\ge\dim Z$. 
\qed
\end{remark}

\begin{proposition}\label{ind_prop}
Let $G$ be any simply connected Lie group with its Lie algebra $\gg$ 
If $G$ is nilpotent, then the following assertions hold: 
\begin{enumerate}
\item\label{ind_prop_item1} 
If $e_1\prec\cdots\prec e_n\prec\emptyset$ are all the index sets of the coadjoint orbits of $G$ 
with respect to some Jordan-H\"older basis of $\gg$, then 
$\ind G=\dim\gg-\card e_1$. 
\item\label{ind_prop_item2} 
One has $\ind G=r$ if and only if there exists some open dense subset $V\subseteq\widehat{G}$ 
that is homeomorphic to some open subset of $\RR^r$. 
\end{enumerate}
\end{proposition}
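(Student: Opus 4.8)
\textbf{Proof proposal for Proposition~\ref{ind_prop}.}

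The plan is to prove \eqref{ind_prop_item1} by relating the index sets of coadjoint orbits to the dimensions of the coadjoint stabilizers, and then to deduce \eqref{ind_prop_item2} by combining \eqref{ind_prop_item1} with the properties of the fine stratification already established in Lemma~\ref{coarse_lemma} and Lemma~\ref{fine_lemma1}, together with Brouwer's invariance of domain (Lemma~\ref{dom}). For \eqref{ind_prop_item1}, first I would recall that for $\xi\in\gg^*$ the jump index set $J_\xi=\{j\mid\gg_j\not\subset\gg(\xi)+\gg_{j-1}\}$ has cardinality $\card J_\xi=\dim\gg-\dim\gg(\xi)$; this is the standard fact that, along a Jordan--H\"older basis, the "jumps" of the flag $\gg_0\subseteq\cdots\subseteq\gg_m$ relative to the subspace $\gg(\xi)$ count exactly $\dim\gg-\dim\gg(\xi)=\dim\Ad^*_G(G)\xi$ (see \cite[Ch.~3]{CG90}). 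Hence by Remark~\ref{ind_rem}, $\ind G=\min_{\xi}\dim\gg(\xi)=\dim\gg-\max_\xi\card J_\xi$. It then remains to check that the maximum of $\card J_\xi$ over all $\xi$ is attained precisely on $\Omega_{e_1}$, i.e. that $e_1$, the $\prec$-least index set, is the one of largest cardinality. This follows from the description of the $\prec$-ordering together with the fact (from \cite{Pe84}, cf.\ the discussion around the coarse stratification) that $\Omega_{e_1}$ is the Zariski-open "generic" stratum, whose orbits have maximal dimension $\dim\gg-\ind G$; so $\card e_1=\dim\gg-\ind G$, giving $\ind G=\dim\gg-\card e_1$.

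For \eqref{ind_prop_item2}, one direction is essentially built into the machinery: suppose $\ind G=r$. The generic coarse stratum $\Xi_{e_1}\subseteq\gg^*/G$ is Zariski-open and dense, and $\widehat G\simeq\gg^*/G$ is a homeomorphism. By Lemma~\ref{coarse_lemma}\eqref{coarse_lemma_item1} its relative topology is Hausdorff, and by \cite[Th.~3.1.14]{CG90} the orbit space $\Xi_{e_1}$ is homeomorphic to an algebraic subset of $\spa\{X_j\mid j\notin e_1\}$, which is a vector space of dimension $\dim\gg-\card e_1=r$; moreover for the generic stratum this algebraic subset is in fact Zariski-open in that $\RR^r$, hence (being nonempty) is an open subset of $\RR^r$. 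Taking $V:=\Xi_{e_1}$ (transported to $\widehat G$) gives the required open dense subset homeomorphic to an open subset of $\RR^r$. Conversely, suppose some open dense $V\subseteq\widehat G$ is homeomorphic to an open subset of $\RR^r$. Since $\Xi_{e_1}$ (transported to $\widehat G$) is also open dense and homeomorphic to a nonempty open subset of $\RR^{\dim\gg-\card e_1}$, the intersection $V\cap\Xi_{e_1}$ is nonempty and open, so Lemma~\ref{dom} forces $r=\dim\gg-\card e_1=\ind G$ by part~\eqref{ind_prop_item1}.

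I expect the main obstacle to be the careful bookkeeping in \eqref{ind_prop_item1}: pinning down that the $\prec$-minimal index set $e_1$ is exactly the one realizing $\max_\xi\card J_\xi$, and that the corresponding stratum $\Omega_{e_1}$ is the Zariski-open stratum on which the coadjoint orbits have maximal dimension. This requires quoting the right properties of the Pedersen ordering $\prec$ and of the coarse stratification from \cite{Pe84} (and possibly \cite[Prop.~1.3.2]{Pe94}) — in particular that $\Omega_{e_1}$ is Zariski-open in all of $\gg^*$, equivalently that $e_1\prec e$ for every other index set $e$ occurring, which is where the definition of $\prec$ via $\min(e_1\setminus e)<\min(e\setminus e_1)$ must be used to see that smaller-in-$\prec$ corresponds to a "later first jump" and hence, for the generic stratum, to a larger jump set. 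Everything else — the dimension count for $\card J_\xi$, the identification $\Xi_{e_1}\cong$ open subset of $\RR^r$, and the invariance-of-domain argument — is routine given the results already in the paper.
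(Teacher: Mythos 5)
Your proposal is correct and takes essentially the same route as the paper: part (1) reduces to the identity $\card e_1=\max_{\xi\in\gg^*}\dim \Ad^*_G(G)\xi$ (which the paper simply recalls from Pedersen's stratification theory), and part (2) uses the generic stratum $\Xi_{e_1}$, its homeomorphism onto a Zariski open subset of $\RR^{\dim\gg-\card e_1}$, and the invariance-of-domain Lemma~\ref{dom}, exactly as in the paper's proof. The only differences are presentational: you spell out the two implications of (2) separately instead of the paper's ``it suffices to check one particular $V$'' reduction, and you add detail on $\card J_\xi=\dim\gg-\dim\gg(\xi)$ and on why the $\prec$-minimal stratum is the Zariski-open generic one, facts the paper quotes from \cite{CG90}, \cite{Pe84} and \cite{Pe94}.
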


\begin{proof}
\eqref{ind_prop_item1} 
Recall that $\card e_1=\max\limits_{\xi\in\gg^*}\dim \Ad^*_G(G)\xi$, hence $\ind G=\dim\gg-\card e_1$.  

\eqref{ind_prop_item2} 
It follows by Lemma~\ref{dom} that it suffices to check the equality $\ind G=r$ 
for any particular choice of the open dense subset $V\subseteq\widehat{G}$ 
that is homeomorphic to some open subset of $\RR^r$. 
Using the above notation and the one introduced in \eqref{coarse_eq1}, 
if $V\subseteq\widehat{G}$ is the image of $\Xi_{e_1}\subseteq\gg^*/G$ by Kirillov's correspondence, 
then $V$ is open and dense in $\widehat{G}$ by \cite[Cor. 3.4.2]{Pe84} or \cite[Cor. 1.3.2]{Pe94}, 
and also $V$ is homeomorphic to some Zariski open subset of $\RR^r$, where $r:=\dim\gg-\card e_1$ 
(see \cite[Subsect 1.2(c)--(d)]{Pe84}). 
For these $V$ and $r$ one has $\ind G=r$  by Assertion~\eqref{ind_prop_item2}. 
\end{proof}

\begin{remark}
\normalfont
It follows by Proposition~\ref{ind_prop}\eqref{ind_prop_item1} 
that the number $\card e_1$ is independent on the choice of the Jordan-H\"older basis of $\gg$. 
\qed
\end{remark}

\subsection{$C^*$-algebras of nilpotent Lie groups are special solvable}

\begin{theorem}\label{nilpsolv}
Let $G$ be any connected, simply connected, nilpotent Lie group with its Lie algebra~$\gg$. 
Then $C^*(G)$ is a special solvable $C^*$-algebra 
with $\ind C^*(G)=\ind G$ and $C^*(G)/\Comm(C^*(G))\simeq\Cc_0([\gg,\gg]^\perp)$. 
\end{theorem}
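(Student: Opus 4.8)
The plan is to verify all five defining conditions of a special solvable $C^*$-algebra (Definition~\ref{solvspecl_def}) for $\Ac=C^*(G)$, using the fine stratification machinery already developed. First I would invoke Proposition~\ref{prop3.5} to obtain the solving series: it produces the increasing chain of open sets $V_0\subsetneq\cdots\subsetneq V_n=\widehat G$, whose successive differences are exactly the nonempty strata $\Gamma_\varepsilon$ of the fine stratification, arranged in the order $\prec$. By Lemma~\ref{fine_lemma1} each $\Gamma_\varepsilon$ is locally closed and Hausdorff in its relative topology, and by Lemma~\ref{fine_lemma2} together with Lemma~\ref{S4} the canonical field of elementary $C^*$-algebras defined by $\Jc_j/\Jc_{j-1}$ is trivial, so $\Jc_j/\Jc_{j-1}\simeq\Cc_0(\Gamma_j,\Kc(\Hc_j))$ via the Fourier transform. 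This gives a genuine solving series in the sense of Definition~\ref{solv_def}.

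Next I would check the extra axioms. For the $\RR$-space structure (item~1): $\gg^*$ carries the scaling action $t\cdot\xi=t\xi$, which descends to $\gg^*/G\simeq\widehat G$, and since the jump-index sets $J_\xi^k$ are manifestly invariant under $\xi\mapsto t\xi$ for $t\neq0$, each stratum $\Omega_\varepsilon$ (hence $\Xi_\varepsilon$, hence $\Gamma_\varepsilon$) is an $\RR$-subspace. The distinguished point is the origin $0\in[\gg,\gg]^\perp$, which is the unique orbit fixed by all scalings; the top stratum $\Gamma_n$ (corresponding to $\varepsilon$ with $J^m_\xi=\emptyset$, i.e. $\xi\in[\gg,\gg]^\perp$) is precisely the set of characters, so $\dim\Hc_n=1$ and $\Gamma_n\cong[\gg,\gg]^\perp$ is a finite-dimensional vector space as a topological $\RR$-space, giving item~2. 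Item~3 is Lemma~\ref{fine_lemma1} (density and openness of $\Gamma_{j+1}$ in $\widehat\Ac\setminus\widehat{\Jc_j}$) combined with the separation property of the points of $\Gamma_{j+1}$, which follows as in the proof of Theorem~\ref{S5} from the norm-continuity in Lemma~\ref{fine_lemma2} (the map $[\pi]\mapsto\norm{\pi(a)}$ is continuous on each stratum, forcing separation); the fact that $\dim\Hc_j=\infty$ for $j<n$ follows since non-character points of $\widehat G$ are infinite-dimensional. For item~4 I would cite \cite[Th. 3.1.14]{CG90} and \cite[Prop. 1.3.2]{Pe94}: the Pedersen/Corwin–Greenleaf coordinates realize each $\Xi_\varepsilon$ as a semi-algebraic cone in $\spa\{X_j\mid j\notin e\}$, with $\Xi_{e_1}=\Gamma_1$ Zariski open by \cite[Cor. 1.3.2]{Pe94}; the ambient dimension is $\dim\gg-\card e_1=\ind G$ by Proposition~\ref{ind_prop}\eqref{ind_prop_item1}, so $\ind C^*(G)=\ind G$. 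Item~5 (the homogeneous defining functions $\varphi_j$) is extracted from the same polynomial description of the strata: the jump-index conditions are expressed by vanishing/non-vanishing of minors of the structure matrix $(\langle\xi,[X_i,X_j]\rangle)$, which are homogeneous polynomials in $\xi$, and these can be consolidated into a single homogeneous $\varphi_j$ cutting out $\Gamma_j$ inside the complement of the lower strata.

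Finally, the identification $C^*(G)/\Comm(C^*(G))\simeq\Cc_0([\gg,\gg]^\perp)$ follows from the general discussion in the Remark after Definition~\ref{solvspecl_def}: since $\dim\Hc_n=1<\dim\Hc_j$ for $j<n$, the set $\Gamma_n$ of characters equals $\widehat{C^*(G)/\Comm(C^*(G))}$, and $\Gamma_n$ is identified with $[\gg,\gg]^\perp$ via Kirillov's correspondence (singleton orbits), so the Gelfand representation yields the asserted $*$-isomorphism.

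I expect the main obstacle to be item~5: producing honest \emph{homogeneous} functions $\varphi_j\colon\widehat{\Ac}\to\RR$ that are polynomial on $\Gamma_1$ and whose common zero/non-zero locus structure reproduces the total order $\prec$ on $\Ec^m$. The raw jump-index data is a conjunction of polynomial (in)equalities indexed by the pairs $(j,k)$, and repackaging the lexicographic-type order $\prec$ into a \emph{single} homogeneous polynomial per level $j$ — while keeping it well-defined on the orbit space, not just on $\gg^*$ — requires care; the homogeneity is automatic once one works with the determinantal expressions, but checking $G$-invariance and that the resulting stratification by $\{\varphi_j\}$ coincides with $\{\Gamma_\varepsilon\}$ in the prescribed order is where the bookkeeping from \cite{Pe84, Pe94} must be assembled precisely.
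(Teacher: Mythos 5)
Your proposal is correct and follows essentially the same route as the paper, whose entire proof is the citation of Proposition~\ref{prop3.5}, Proposition~\ref{ind_prop} and \cite[Cor. 3.4.2]{Pe84}; you have simply unpacked what those ingredients verify item by item in Definition~\ref{solvspecl_def}. Your reliance on Pedersen's polynomial/semi-algebraic description of the fine strata (including for the homogeneous functions in item~5, which you rightly flag as the only delicate bookkeeping) is exactly the reliance implicit in the paper's one-line argument.
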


\begin{proof}
This follows by Propositions \ref{prop3.5} and \ref{ind_prop}, 
using also \cite[Cor. 3.4.2]{Pe84}. 
\end{proof}

\section{Uniqueness of Heisenberg group via special solvable $C^*$-algebras}\label{section5}

We now prove that the Heisenberg groups are the only nilpotent Lie groups whose index is~1 and whose $C^*$-algebras 
are special solvable of length~2. 

\begin{proposition}\label{exh}
Let $G$ be any nilpotent Lie group for which there exists a short exact sequence of $C^*$-algebras 
$$0\to\Cc_0(\Gamma_1,\Kc(\Hc))\to C^*(G)\to\Cc_0([\gg,\gg]^\perp)\to0$$ 
satisfying the following conditions: 
\begin{itemize}
\item $\Gamma_1$ is a dense open $\RR$-subspace of $\widehat{G}$ that is homeomorphic to $\RR^*$; 
\item $\Hc$ is a separable infinite-dimensional complex Hilbert space. 
\end{itemize}
Then there exists a unique integer $d\ge 1$ such that $\dim[\gg,\gg]^\perp=2d$ and 
$G$ is isomorphic to the Heisenberg group $\HH_{2d+1}$.  
\end{proposition}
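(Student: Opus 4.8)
The strategy is to extract structural information about $\gg$ from the very rigid hypothesis: the whole $C^*$-algebra $C^*(G)$ has exactly two ``pieces'', a copy of $\Cc_0(\RR^*,\Kc(\Hc))$ and the commutative quotient $\Cc_0([\gg,\gg]^\perp)$. First I would translate the topological data: $\Gamma_1\simeq\RR^*$ being an open dense $\RR$-subspace of $\widehat G$ means, via Kirillov's correspondence, that the generic coadjoint orbits form a one-parameter family, so $\ind G=1$ by Proposition~\ref{ind_prop}\eqref{ind_prop_item2}. By Remark~\ref{ind_rem}, $\ind G=\min_{\xi}\dim G(\xi)\ge\dim Z$ where $Z$ is the center; hence $\dim Z\le 1$, and since a nontrivial nilpotent Lie algebra has nonzero center, $\dim Z=1$. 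Next, because $\Gamma_1$ is the \emph{only} stratum carrying infinite-dimensional representations and $\Hc$ is infinite-dimensional, every generic coadjoint orbit is infinite-dimensional, so $G$ is non-abelian; combined with $\ind G=1$, the generic orbits have codimension $1$ in $\gg^*$.

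\textbf{Pinning down the dimension and the bracket.} The key step is to show $\gg$ has a very specific form. Since $\dim Z=1$ and the generic orbits have dimension $\dim\gg-1$, a generic $\xi\in\gg^*$ has $\dim G(\xi)=1$, i.e.\ $G(\xi)=Z$ (using $Z\subseteq G(\xi)$ always). This forces the alternating bilinear form $B_\xi(X,Y):=\langle\xi,[X,Y]\rangle$ on $\gg/\zg$ to be \emph{nondegenerate} for generic $\xi$, where $\zg=\Lie(Z)$. In particular $\dim(\gg/\zg)$ is even, say $=2d$, so $\dim\gg=2d+1$. Moreover, for $\xi$ generic the radical of $B_\xi$ is exactly $\zg$, which (since $[\gg,\gg]\subseteq\zg$ would already force the form to descend) pushes toward $[\gg,\gg]=\zg$ and $\dim[\gg,\gg]^\perp=\dim\gg-\dim[\gg,\gg]=2d$. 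To get $[\gg,\gg]=\zg$ cleanly: the quotient $C^*(G)/\Comm(C^*(G))\simeq\Cc_0([\gg,\gg]^\perp)$ is identified by Theorem~\ref{nilpsolv} (or directly), and the hypothesis says this quotient is precisely $\Cc_0([\gg,\gg]^\perp)$, while the kernel is $\Cc_0(\RR^*,\Kc(\Hc))$ with a \emph{single} infinite-dimensional stratum — if $[\gg,\gg]$ were strictly smaller than $\zg$ one would have more characters than $[\gg,\gg]^\perp$, or the stratification would have more than two layers; alternatively, a short argument: $[\gg,\gg]\subseteq\zg$ is automatic once $\zg$ has dimension one and $\gg$ is non-abelian (as $[\gg,\gg]$ is a nonzero ideal contained in the center of a two-step quotient... ) — here I would instead argue that the center being one-dimensional and $B_\xi$ generically nondegenerate on $\gg/\zg$ forces $\gg$ to be two-step nilpotent with $[\gg,\gg]=\zg$, because any element of $\gg$ acts on $\gg/\zg$ and if some bracket landed outside $\zg$ the generic orbit dimension computation would be inconsistent with codimension one.

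\textbf{Identifying $G$ with $\HH_{2d+1}$.} Once we know $\gg$ is $(2d+1)$-dimensional, two-step nilpotent, with one-dimensional center $\zg=[\gg,\gg]$, and the generic form $B_\xi$ on $\gg/\zg\cong\RR^{2d}$ nondegenerate, the classification is immediate: pick $\xi_0$ generic and a generator $Z_0$ of $\zg$ with $\langle\xi_0,Z_0\rangle=1$; then $[X,Y]=B_{\xi_0}(X,Y)Z_0$ for all $X,Y$ (the bracket is determined by its $\zg$-component since $[\gg,\gg]=\zg$ is one-dimensional, and the single scalar is read off by $\xi_0$), and $B_{\xi_0}$ is a nondegenerate alternating form, so a symplectic basis $e_1,f_1,\dots,e_d,f_d$ of $\gg/\zg$ lifts to a basis of $\gg$ with $[e_i,f_i]=Z_0$ and all other brackets zero — this is exactly the Heisenberg Lie algebra $\hg_{2d+1}$. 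Since $G$ is connected and simply connected (nilpotent, identified with $(\gg,\cdot)$), $G\cong\HH_{2d+1}$. Uniqueness of $d$ follows from $2d=\dim[\gg,\gg]^\perp$, which is an invariant of $G$. \textbf{Expected main obstacle:} the delicate point is deriving ``$\gg$ is two-step with $[\gg,\gg]=\zg$'' purely from the given short exact sequence without circular appeal to the structure one is trying to prove; I expect the cleanest route is the orbit-dimension/nondegeneracy argument above (generic $G(\xi)=Z$ $\Rightarrow$ $B_\xi$ nondegenerate on $\gg/\zg$ $\Rightarrow$ since this holds for an open dense set of $\xi$ and $\zg$ is one-dimensional, $[\gg,\gg]$ cannot be larger than $\zg$, hence equals it), rather than any argument through the $C^*$-algebra extension itself.
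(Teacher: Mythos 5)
There is a genuine gap at exactly the point you flag as delicate: the claim that $\ind G=1$, $\dim\zg=1$ and generic nondegeneracy of $B_\xi$ on $\gg/\zg$ force $[\gg,\gg]=\zg$ (two-step) is false, and no orbit-dimension count alone can repair it. Counterexamples appear in this very paper: the $3$-step, $7$-dimensional groups of Remark~\ref{SQ9} (and likewise the groups $H_{n,n}$ of Definition~\ref{SQ10} for large $n$) have one-dimensional center, generic \emph{flat} coadjoint orbits with $\gg(\xi)=\zg$, hence $\ind G=1$, codimension-one generic orbits, and $B_\xi$ nondegenerate on $\gg/\zg$ for $\xi$ in a dense open set --- yet $[\gg,\gg]$ is much larger than $\zg$ and the group is far from Heisenberg. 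What distinguishes the Heisenberg group is precisely the part of the hypothesis you decided not to use, namely that the quotient of $C^*(G)$ by $\Cc_0(\Gamma_1,\Kc(\Hc))$ is the \emph{commutative} algebra $\Cc_0([\gg,\gg]^\perp)$ and that $\Gamma_1\simeq\RR^*$: via Kirillov this says $\gg^*/G=\{\Oc_{t\xi_1}\mid t\in\RR^*\}\sqcup[\gg,\gg]^\perp$, i.e.\ every non-generic orbit is a single point and the infinite-dimensional part of the dual is one single curve. Your closing remark that the cleanest route avoids ``any argument through the $C^*$-algebra extension itself'' is therefore exactly backwards: for the $3$-step examples above the ideal is again $\Cc_0(\RR^*,\Kc(\Hc))$ and $\Gamma_1\simeq\RR^*$, so only the commutativity of the quotient rules them out.

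The paper's proof uses this decomposition twice: first to produce $x,y\in\gg$ with $z:=[x,y]\in\zg\setminus\{0\}$ and $\langle\xi_1,z\rangle\ne0$, and then, assuming $[\gg,\gg]\supsetneqq\RR z$, to pick $\xi\in\gg^*$ with $\langle\xi,z\rangle=0$ but $\langle\xi,[\gg,\gg]\rangle\ne\{0\}$; such a $\xi$ is not a character and cannot lie on any $\Oc_{t\xi_1}$ (every point of that orbit pairs with the central element $z$ to $t\langle\xi_1,z\rangle\ne0$), contradicting the two-piece decomposition of $\gg^*/G$. This yields $[\gg,\gg]=\RR z\subseteq\zg$, whence $\gg\simeq\hg_{2d+1}\times\ag_k$, and $\ind G=1$ (your Proposition~\ref{ind_prop} step, which is fine) kills the abelian factor. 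Your preliminary steps ($\ind G=1$, $\dim\zg=1$, generic $\gg(\xi)=\zg$, $\dim\gg$ odd) and your final symplectic-basis identification are correct, but the bridge between them must go through the singleton-orbit/commutative-quotient hypothesis; as written, your argument would ``prove'' that $H_{n,n}$ is a Heisenberg group.
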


\begin{proof}
Let us denote by $\Oc_\xi$ the coadjoint orbit of every $\xi\in\gg^*$. 
Since the group $G$ has infinite-dimensional representations, it follows that it is non-commutative, 
hence there exists $\xi_1\in\gg^*$ with $\Oc_{\xi_1}\ne\{\xi_1\}$, and then  
$\Oc_{t\xi_1}\ne\{t\xi_1\}$ for every $t\in\RR^*$.  
Identifying $\widehat{G}$ with the space of coadjoint orbits $\gg^*/G$ by Kirillov's correspondence, 
it follows by the exact sequence from the statement that one has the disjoint union  
$\gg^*/G=\Gamma_1\sqcup[\gg,\gg]^\perp$. 
Moreover, $\xi_1\in\Gamma_1$, and by the hypothesis that $\Gamma_1$ is open $\RR$-subspace of $\widehat{G}$ 
that is homeomorphic to $\RR^*$ 
we obtain $\Gamma_1=\{\Oc_{t\xi_1}\mid t\in\RR^*\}$, hence 
\begin{equation}\label{exh_proof_eq1}
\gg^*/G=\{\Oc_{t\xi_1}\mid t\in\RR^*\}\sqcup[\gg,\gg]^\perp.
\end{equation}
Denoting the center of $\gg$ by $\zg$ and using the fact that the Lie algebra $\gg$ is nilpotent, 
it follows that $[\gg,\gg]\cap\zg\ne\{0\}$. 
Then, using \eqref{exh_proof_eq1} and reasoning by contradiction, 
it easily follows that there exist $x,y\in\gg$ with $z:=[x,y]\in\zg\setminus\{0\}$ 
and $\langle\xi_1,z\rangle\ne0$. 

We claim that $[\gg,\gg]=\RR z$. 
In fact, if $[\gg,\gg]\supsetneqq\RR z$, then there exists $\xi\in\gg^*$ with $\langle\xi,z\rangle=0$ 
and $\langle\xi,[\gg,\gg]\rangle\ne\{0\}$. 
Then we have $\xi\not\in[\gg,\gg]^\perp$, and 
also $\xi\not\in\bigcup\limits_{t\in\RR^*}\Oc_{t\xi_1}$ 
since $\langle\xi,z\rangle=0\not\in\langle \bigcup\limits_{t\in\RR^*}\Oc_{t\xi_1},z\rangle$. 
We thus obtained a contradiction with \eqref{exh_proof_eq1}, and this proves the above claim. 

Since $[\gg,\gg]=\RR z$, it follows that there exist  some uniquely determined integers $d\ge 1$ and $k\ge 0$ 
with $\gg=\hg_{2d+1}\times\ag_k$, where $\ag_k$ is the abelian $k$-dimensional Lie algebra. 
Then the index of $\gg$ is $\ind\gg=k+1$ by Remark~\ref{ind_rem}. 
On the other hand, since $\Gamma_1$ is a dense open subset of  $\widehat{G}$ that is homeomorphic to $\RR^*$, 
it follows by Proposition~\ref{ind_prop}\eqref{ind_prop_item2} that $\ind\gg=1$, 
hence by comparing with the above formula we obtain $k=0$, that is, $\gg=\hg_{2d+1}$, 
and this completes the proof. 
\end{proof}

\section{On the conjecture on continuous-trace subquotients}\label{section6}

One of the open problems in the theory of $C^*$-algebras of exponential Lie groups 
is the conjecture that every continuous-trace subquotient of their $C^*$-algebra 
has its Dixmier-Douady invariant equal to zero \cite{RaRo88, Ro94, LiRo96}. 
It was proved to be true so far only for 2-step nilpotent Lie groups \cite[Th. 3.4]{LiRo96}.
Using a different method,  based on the topology of the space of coadjoint orbits,  
we will also prove it here for uncountable families of 3-step nilpotent Lie groups 
and also for a sequence of nilpotent Lie groups of arbitrarily high nilpotency step. 
\begin{notation}
\normalfont
For any separable $C^*$-algebra $\Ac$ let $\Subquot(\Ac)$ be the set of all $*$-isomorphism classes of subquotients of $\Ac$, 
that is, $C^*$-algebras $\Jc_2/\Jc_1$, 
where $\Jc_1\subseteq\Jc_2$ are ideals of $\Ac$. 
Let $\Subquot^{\Hausd}(\Ac)$ be the $*$-isomorphism classes of subquotients of $\Ac$ with Hausdorff dual. 
We also denote by $\Subquot^{\Tr}(\Ac)$ the set of $*$-isomorphism classes of subquotients of $\Ac$ with continuous trace, 
and by $\Subquot^{\Tr}_0(\Ac)$ its subset corresponding to the subquotients whose Dixmier-Douady invariants are equal to zero. 

Then one has 
$$\Subquot^{\Tr}_0(\Ac)\subseteq\Subquot^{\Tr}(\Ac)\subseteq\Subquot^{\Hausd}(\Ac)\subseteq\Subquot(\Ac).$$

We denote by $[\Ac]$ the $*$-isomorphism class of $\Ac$. 
\end{notation}

For later use note that from \cite[Th.~10.9.3]{Dix64} that if $\Xc$ is any separable $C^*$-algebra with continuous trace, then the Dixmier-Douady invariant of $\Xc$ vanishes if and only if $\Xc$ is isomorphic to the $C^*$-algebra of a continuous field of non-zero Hilbert spaces on $\widehat{\Xc}$, hence this property is invariant under direct sums of $C^*$-algebras. 

Recall that a separable $C^*$-algebra has its Dixmier-Douady invariant equal to zero if and only it is Morita equivalent to a commutative $C^*$-algebra or, equivalently,  if it is stably isomorphic to a commutative $C^*$-algebra (see for instance \cite[Sect.~5.3]{Ec96} and the references therein).

The conjecture below has been raised in \cite[Sect. 4]{RaRo88}, \cite[Sect. 3]{Ro94}, and \cite[Conj. 3.2]{LiRo96}, 
and so far it was proved only for the 2-step nilpotent Lie groups (see \cite[Th. 3.4]{LiRo96}).

\begin{conjecture}\label{SQ0}
\normalfont
If $\Ac$ is the $C^*$-algebra of an arbitrary exponential Lie group, then $\Subquot^{\Tr}_0(\Ac)=\Subquot^{\Tr}(\Ac)$. 
\end{conjecture}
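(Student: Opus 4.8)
The statement is a conjecture that remains open for general exponential Lie groups, so what follows is a plan of attack rather than a complete argument; it also traces the route by which the special cases announced in the introduction (Corollaries~\ref{SQ8} and \ref{SQ12}) are reached. The first move is to turn the problem into a cohomological one. By the observation recorded just above (quoting \cite[Th.~10.9.3]{Dix64}), a separable continuous-trace $C^*$-algebra $\Xc$ has vanishing Dixmier--Douady invariant precisely when it is the $C^*$-algebra of a continuous field of nonzero Hilbert spaces over its Hausdorff spectrum, equivalently when $\Xc$ is stably isomorphic to the commutative algebra $\Cc_0(\widehat{\Xc})$, and the invariant itself is a class in $H^3(\widehat{\Xc},\mathbb{Z})$. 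By Remark~\ref{S3}, every member of $\Subquot^{\Tr}(C^*(G))$ is a subquotient $\Jc_2/\Jc_1$ whose spectrum is a locally closed subset $X\subseteq\widehat{G}$ that is Hausdorff, since a continuous-trace algebra has Hausdorff spectrum. Thus Conjecture~\ref{SQ0} is equivalent to: for every such $X$ the class in $H^3(X,\mathbb{Z})$ of the canonical field of elementary $C^*$-algebras over $X$ vanishes, and for this it suffices to exhibit a global continuous field of nonzero separable Hilbert spaces on $X$ with section algebra isomorphic to $\Jc_2/\Jc_1$.

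The next step is to exploit the fine stratification underlying Theorem~\ref{nilpsolv}: $\widehat{G}$ is partitioned into the strata $\Gamma_\varepsilon$, each homeomorphic to a semi-algebraic cone inside the span of the non-jump coordinates (Lemma~\ref{coarse_lemma}\eqref{coarse_lemma_item1} and Lemma~\ref{fine_lemma1}), and over each stratum the canonical field restricts to the \emph{trivial} field $\Cc_0(\Gamma_\varepsilon,\Kc(\Hc_\varepsilon))$ by Lemma~\ref{fine_lemma2} together with Lemma~\ref{S4}. Intersecting $X$ with the stratification exhibits $\Jc_2/\Jc_1$ as a finite iterated extension of the trivial layers $\Cc_0(X\cap\Gamma_\varepsilon,\Kc(\Hc_\varepsilon))$, the extension data being governed by the Busby maps of Proposition~\ref{S1}. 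If $X$ meets only one stratum, triviality is immediate; in general the obstruction to global triviality is carried entirely by how the local unitary trivializations on the pieces $X\cap\Gamma_\varepsilon$ match along stratum boundaries, and one would try to kill it by choosing these trivializations compatibly through a partition-of-unity argument over a finite cover of $X$ refining the stratification, reducing everything to the vanishing of $H^3(X,\mathbb{Z})$. For $2$-step nilpotent groups this vanishing is \cite[Th.~3.4]{LiRo96}; for the $3$-step families and the high nilpotency-step sequence of Corollaries~\ref{SQ8} and \ref{SQ12} the same scheme applies once one checks, from the explicit algebraic description of the $\Gamma_\varepsilon$, that every relevant $X$ is assembled from cones (hence contractible pieces) glued along subsets simple enough that a Mayer--Vietoris computation forces $H^3(X,\mathbb{Z})=0$, after which the trivial fields over the closures of the strata are patched into a Hilbert bundle on $X$.

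The main obstacle is precisely the general case. There is no a priori bound on $H^3$ of an arbitrary Hausdorff locally closed subset of the non-Hausdorff stratified space $\widehat{G}$; the combinatorics of the fine stratification, and hence the boundary-patching problem, grow with the nilpotency step; and the conjecture is stated for exponential rather than merely nilpotent groups, where the semi-algebraic rigidity of the coadjoint orbit space is weaker and even the layer structure is less transparent. So the present method succeeds only when the stratification and the geometry of the cones are simple enough to force the third cohomology of every continuous-trace spectrum to vanish, and a uniform argument covering all exponential Lie groups would require a genuinely new ingredient beyond the structure theory developed above.
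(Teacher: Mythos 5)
You are right that the statement is a conjecture and that the paper does not prove it; it only establishes the special cases of Corollaries~\ref{SQ8} and \ref{SQ12}. The problem is that your sketch of how those special cases are reached is not the paper's route, and the mechanism you propose has real gaps. The paper never computes or even mentions $H^3$ of the spectra involved, and it uses no partition of unity, no Mayer--Vietoris, and no patching of local trivializations across stratum boundaries. Instead it introduces the notion of a controlled boundary extension (Definition~\ref{SQ6}), proves a splitting result (Lemma~\ref{SQ5}: a locally closed Hausdorff subset $S$ of the spectrum meets the open part $X_1$ in a relatively closed, hence also open, subset of $S$), and deduces a two-out-of-three permanence property (Theorem~\ref{SQ7}): for a controlled boundary extension, $\Subquot^{\Tr}=\Subquot^{\Tr}_0$ holds for $\Ac$ iff it holds for the ideal and the quotient, because each continuous-trace subquotient splits as a direct sum $\Tc_1\dotplus\Tc_2$ of subquotients living over $S\cap X_1$ and $S\cap X_2$. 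The geometric input is not cohomological but orbit-theoretic: flatness of coadjoint orbits (Lemma~\ref{allflat}, Proposition~\ref{SQ11}) and the absence of isolated limit points (Lemma~\ref{SQ11.5}) verify the controlled-boundary hypothesis, after which Corollary~\ref{SQ8} reduces to the $2$-step case \cite[Th.~3.4]{LiRo96} and Corollary~\ref{SQ12} follows by induction on the nilpotency step via the extension \eqref{SQ12_proof_eq1}.

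Concretely, the weak points in your plan are these. First, ``killing'' the obstruction ``by choosing trivializations compatibly through a partition-of-unity argument'' is not an argument: the Dixmier--Douady class is exactly the obstruction to such compatible choices, so this step assumes what is to be proved. Second, the claim that the pieces are ``cones (hence contractible)'' is false as stated: the strata $\Gamma_\varepsilon$ (and the sets $\Gamma_j$ of Definition~\ref{solvspecl_def}) are semi-algebraic cones \emph{without} their vertex (the origin lies in $\Gamma_n$), and $\Gamma_1$ is only a Zariski-open subset of $\RR^k$; such sets are in general neither connected nor contractible, so no Mayer--Vietoris computation forcing $H^3(X,\ZZ)=0$ is available without substantial extra work --- indeed, establishing such vanishing for every Hausdorff locally closed $X\subseteq\widehat{G}$ is essentially the content of the conjecture, not a tool for proving it. Third, while it is true (via Lemma~\ref{fine_lemma2} and Lemma~\ref{S4}) that the canonical field is trivial over each stratum, exhibiting an arbitrary continuous-trace subquotient as an iterated extension of the layers over $X\cap\Gamma_\varepsilon$ and controlling the resulting extension data is precisely where the paper needs the splitting of Lemma~\ref{SQ5}, which in turn requires the limit-set condition of Definition~\ref{SQ6}; this condition fails for general nilpotent groups (e.g.\ the threadlike groups, as the paper notes after Lemma~\ref{allflat}), so your stratification-and-patching scheme cannot be expected to run uniformly even in the nilpotent case, let alone for exponential groups.
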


It follows by Lemma~\ref{SQ2} below that the class of $C^*$-algebras $\Ac$ with $\Subquot^{\Tr}_0(\Ac)=\Subquot^{\Tr}(\Ac)$ 
is closed with respect to taking ideals and quotients, hence also subquotients. 
Thus Conjecture~\ref{SQ0} 
is equivalent to the formally stronger assertion that $\Subquot^{\Tr}_0(\Ac)=\Subquot^{\Tr}(\Ac)$ 
whenever $\Ac$ is any subquotient of the $C^*$-algebra of an arbitrary exponential Lie group.

\subsection*{Continuous-trace subquotients and $C^*$-algebra extensions}

\begin{lemma}\label{SQ2}
For any short exact sequence of separable $C^*$-al\-ge\-bras 
\begin{equation}\label{SQ1_eq1}
0\to\Jc\to\Ac\mathop{\to}\limits^p\Bc\to0
\end{equation}
we have
$
\Subquot(\Jc)\cup \Subquot(\Bc)\subseteq \Subquot(\Ac)$.
\end{lemma}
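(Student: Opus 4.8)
The plan is to prove the two inclusions $\Subquot(\Jc)\subseteq\Subquot(\Ac)$ and $\Subquot(\Bc)\subseteq\Subquot(\Ac)$ separately, both of which reduce to elementary facts about the lattice of closed two-sided ideals under the operations of passing to an ideal and to a quotient.

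First I would handle $\Subquot(\Jc)\subseteq\Subquot(\Ac)$. Take a subquotient $\Jc_2/\Jc_1$ of $\Jc$, where $\Jc_1\subseteq\Jc_2$ are closed two-sided ideals of $\Jc$. Since $\Jc$ is itself a closed two-sided ideal of $\Ac$, I would invoke the standard fact (e.g. \cite[Prop. 1.8]{Dix64}-style, or directly: a closed ideal of a closed ideal of a $C^*$-algebra is a closed ideal of the whole algebra, because $\Jc_1\Ac\Jc_1\subseteq\Jc_1\Jc\Jc_1\subseteq\Jc_1$ together with closedness and an approximate-unit argument in $\Jc$) that $\Jc_1$ and $\Jc_2$ are then closed two-sided ideals of $\Ac$. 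Hence $\Jc_2/\Jc_1$ is a subquotient of $\Ac$, giving $[\Jc_2/\Jc_1]\in\Subquot(\Ac)$.

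Next, $\Subquot(\Bc)\subseteq\Subquot(\Ac)$. Let $\Bc_2/\Bc_1$ be a subquotient of $\Bc=\Ac/\Jc$, with $\Bc_1\subseteq\Bc_2$ closed ideals of $\Bc$. Using the correspondence between closed ideals of $\Ac/\Jc$ and closed ideals of $\Ac$ containing $\Jc$ (the preimages $p^{-1}(\Bc_1)\subseteq p^{-1}(\Bc_2)$ are closed ideals of $\Ac$, both containing $\Jc$), the standard third-isomorphism theorem for $C^*$-algebras gives a $*$-isomorphism
$$p^{-1}(\Bc_2)/p^{-1}(\Bc_1)\simeq \Bc_2/\Bc_1,$$
so $[\Bc_2/\Bc_1]\in\Subquot(\Ac)$ as well. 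Combining the two inclusions yields $\Subquot(\Jc)\cup\Subquot(\Bc)\subseteq\Subquot(\Ac)$.

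There is essentially no hard part here; the only point requiring a little care is the transitivity claim that a closed two-sided ideal of a closed two-sided ideal of a $C^*$-algebra is again a closed two-sided ideal of the ambient algebra — this is a genuinely $C^*$-algebraic statement (false for general Banach algebras) and I would cite it from \cite{Dix64} rather than reprove it, or give the one-line approximate-identity argument. Everything else is a formal manipulation of the ideal lattice and the isomorphism theorems, valid in any category with a reasonable notion of quotient. I would keep the write-up to a few lines, splitting it as the two displayed inclusions with the transitivity fact and the third isomorphism theorem cited.
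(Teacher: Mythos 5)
Your proposal is correct and follows essentially the same route as the paper: the transitivity of ideals (the paper cites \cite[Prop.~1.8.5]{Dix64}) gives $\Subquot(\Jc)\subseteq\Subquot(\Ac)$, and pulling back ideals of $\Bc$ along $p$ together with the isomorphism $p^{-1}(\Bc_2)/p^{-1}(\Bc_1)\simeq\Bc_2/\Bc_1$ gives $\Subquot(\Bc)\subseteq\Subquot(\Ac)$. No gaps; the write-up matches the paper's argument in both structure and substance.
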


\begin{proof}
Since $\Jc$ is an ideal of $\Ac$, 
it follows by \cite[Prop. 1.8.5]{Dix64} that any ideal of $\Jc$ is an ideal of $\Ac$, 
and hence $\Subquot(\Jc)\subseteq\Subquot(\Ac)$. 

Now let $\Jc_1\subseteq\Jc_2$ be any ideals of $\Bc$. 
Then $p^{-1}(\Jc_1)\subseteq p^{-1}(\Jc_2)$ are ideals of $\Ac$ 
and $p$ gives rise to a $*$-isomorphism $p^{-1}(\Jc_k)/\Jc\simeq\Jc_k$ for $k=1,2$, 
hence to a $*$-isomorphism $p^{-1}(\Jc_2)/p^{-1}(\Jc_1)\simeq\Jc_2/\Jc_1$. 
Thus $[\Jc_2/\Jc_1]=[p^{-1}(\Jc_2)/p^{-1}(\Jc_1)]\in\Subquot(\Ac)$, 
and this completes the proof of the inclusion $\Subquot(\Bc)\subseteq \Subquot(\Ac)$. 
\end{proof}

\begin{remark}\label{S33}
\normalfont
For any topological space $T$ we denote by $\LC(T)$ the set of all locally closed subsets of $T$.
We recall from Remark~\ref{S3} that
for any $C^*$-algebra $\Ac$ 
the locally closed subsets of $\widehat{\Ac}$ 
are precisely the spectra of subquotients of $\Ac$.  
The map 
$$\Psi_\Ac\colon \Subquot(A)\to\LC(\widehat{\Ac}),\quad [\Jc_2/\Jc_1]\mapsto \widehat{\Jc_2/\Jc_1}$$
is a well-defined bijection. 
\qed
\end{remark}

\begin{lemma}\label{SQ5}
Let $X$ be a first-countable topological space.  
Assume that $X= X_1 \sqcup X_2$ where 
\begin{enumerate}[(i)] 
\item $X_1$ is open, locally compact Hausdorff,  and its points are separated in $X$. 
\item If $\overline{x}=\{x_j\}_{j\in \NN}$ is a convergent sequence contained in $X_1$ and such that 
its set of limit points $L(\overline{x})$ is contained in  $X_2$, then $L(\overline{x})$ has no isolated points. 
\end{enumerate}
Then for every locally closed and Hausdorff subset $S$ of $X$ we have that $S\cap X_1$ is relatively closed in $S$. 
\end{lemma}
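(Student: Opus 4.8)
The plan is to argue by contradiction. Suppose $S\subseteq X$ is locally closed and Hausdorff in its relative topology, but $S\cap X_1$ is not relatively closed in $S$. Since $X$ is first-countable, so is $S$, hence there exists a sequence $\{x_j\}_{j\in\NN}$ in $S\cap X_1$ that converges in $S$ to some point $x_\infty\in S\setminus X_1$; because $X=X_1\sqcup X_2$, we have $x_\infty\in X_2$. Passing to a subsequence if necessary (which I may do freely, as a subsequence of a convergent sequence still converges to the same limit in $S$), I would like to arrange that the sequence is ``well-behaved'' enough to invoke hypothesis~(ii).

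The first key step is to promote $\{x_j\}$ to a \emph{convergent} sequence in $X_1$ in the sense of hypothesis~(ii) and to identify its limit set $L(\overline{x})$. Here I would use that the points of $X_1$ are separated in $X$ (hypothesis~(i)): combined with the fact that $\{x_j\}\subseteq X_1$ converges to $x_\infty\in X_2$, I expect that after passing to a subsequence the set of limit points $L(\overline{x})$ is entirely contained in $X_2$ — intuitively, if a subnet clustered at a point $y\in X_1$, separatedness of $y$ from the eventual behaviour in $X_2$ would force $y$ to be a genuine (Hausdorff-type) limit, contradicting convergence to $x_\infty\ne y$; one then discards the finitely/countably many such ``good'' indices. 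Thus we may assume $L(\overline{x})\subseteq X_2$, so hypothesis~(ii) applies and yields that $L(\overline{x})$ has no isolated points. Since $x_\infty\in L(\overline{x})$, in particular $x_\infty$ is not isolated in $L(\overline{x})$.

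The second key step is to derive a contradiction with the Hausdorffness of $S$. Note that $L(\overline{x})\cap S$ is nonempty (it contains $x_\infty$) and, since $S$ is locally closed, $S$ is open in its closure $\overline{S}$; I would use local closedness to control which limit points of $\{x_j\}$ actually lie in $S$. The point is that in a Hausdorff space a convergent sequence has a \emph{unique} limit, so within $S$ the sequence $\{x_j\}$ can cluster at no point of $S$ other than $x_\infty$. Hence $L(\overline{x})\cap S=\{x_\infty\}$, which says $x_\infty$ is (relatively) isolated in $L(\overline{x})\cap S$. Combining this with the density/no-isolated-point conclusion from hypothesis~(ii) — after checking that $L(\overline{x})$ meets $S$ in a set whose topology is inherited compatibly, using that $S$ is open in $\overline{S}$ and $L(\overline{x})\subseteq\overline{S}$ — forces a contradiction: a point that is isolated in $L(\overline{x})\cap S$ but not isolated in $L(\overline{x})$ requires limit points of $\{x_j\}$ accumulating at $x_\infty$ from outside $S$, yet these limit points lie in $\overline{S}\setminus S$, contradicting that $S$ is open in $\overline{S}$ (a neighbourhood of $x_\infty$ in $\overline{S}$ is contained in $S$, hence contains a tail of the sequence, hence contains nearby limit points too). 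This yields the desired contradiction, so $S\cap X_1$ must be relatively closed in $S$.

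The main obstacle I anticipate is the bookkeeping in the first key step: carefully extracting a subsequence whose limit set avoids $X_1$, using only separatedness of points of $X_1$ together with first-countability, rather than full Hausdorffness of $X$ (which fails in our intended applications). One must be attentive to the distinction between ``limit'' and ``limit point/cluster point'' in a non-Hausdorff ambient space, and make sure that the separatedness hypothesis is strong enough to rule out cluster points in $X_1$ after thinning the sequence. The rest is a fairly standard juggling of the local-closedness of $S$ (i.e., $S$ open in $\overline{S}$) against the ``no isolated points'' conclusion supplied by hypothesis~(ii).
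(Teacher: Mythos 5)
Your proposal is correct and follows essentially the same route as the paper's proof: separatedness of the points of $X_1$ forces the limit set of a sequence in $S\cap X_1$ converging to a point $x_\infty\in S\cap X_2$ to lie entirely in $X_2$, hypothesis (ii) then says this limit set has no isolated points, and local closedness of $S$ (an open $D$ with $x_\infty\in D$ and $D\cap\overline{S}\subseteq S$) combined with Hausdorffness of $S$ yields a second limit point inside $S$, a contradiction. The only remark is that the subsequence extraction you anticipate in your first step is unnecessary: for the full sequence, separating any $y\in X_1$ from $x_\infty$ by disjoint open sets already rules out $y$ as a limit point (rather than "forcing $y$ to be a genuine limit"), which is exactly how the paper argues.
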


\begin{proof}
Note first that the hypotheses imply that for  a convergent sequence  $\overline{x}=\{x_j\}_{j\in \NN}$ contained in $X_1$, we have:
Either 
the set of its limit points 
$L(\overline{x})$ is a single point in $X_1$, which is Hausdorff, or, 
because the points of $X_1$ are separated in $X$, $L(\overline{x})\subseteq X_2$, hence $L(\overline{x})$ has no isolated points.

Let $y\in S\cap \overline{S\cap X_1}$. 
We have to prove that $y \in S\cap X_1$. 
Assume the contrary, that is, $y \not \in S\cap X_1$. 
Then $y$ must belong to $S\cap X_2$. 
Since $y \in   \overline{S\cap X_1}$ there exists a convergent sequence $\overline{x}=\{x_j\}_{j\in \NN}$ such that 
$y\in L(\overline{x})$.   
Then since $y \not \in X_1$ it follows that $y\in L(\overline{x})$ is not an isolated point. 

The set $S$ is locally closed, hence there are $D\subset X$ open and $F\subset X$ closed such that $S=F\cap D$. 
Since $y\in S\subset D$, we obtain that there is $y'\ne y$ with $y'\in D\cap L(\overline{x})$. 
On the other hand the sequence $\overline{x}$ is contained in $F$, thus $L(\overline{x})\subset F$. 
We get that $\{y, y'\}\in L(\overline{x})\cap S$, which contradicts the fact that $S$ is Hausdorff. 
Therefore we must have that $y\in S\cap X_1$.
\end{proof}

Topologies of the type considered in Lemma~\ref{SQ5} have been earlier studied in connection with $C^*$-algebra extensions.
(See for instance \cite[Ch. VII]{De72}, for a stronger condition on the sets of limit points.)

\begin{definition}\label{SQ6} 
\normalfont
The short exact sequence of separable $C^*$-algebras 
$$
0\to\Ac_1\to\Ac\mathop{\to}\limits^p\Ac_2\to0
$$
 is called a \emph{controlled boundary extension} 
if,  denoting $X=\widehat{A}$, $X_1=\widehat{\Ac_1}$, $X_2=\widehat{\Ac_2}$, the topological space
$X= X_1\sqcup X_2$ satifies the conditions in Lemma~\ref{SQ5}. 
\end{definition}

Now we can prove the following theorem. 

\begin{theorem}\label{SQ7}
Let 
$0\to\Ac_1 \to\Ac\mathop{\to}\limits^p\Ac_2\to0$
be any controlled boundary extension.
 Then the following assertions are equivalent: 
\begin{enumerate}[(i)] 
\item $\Subquot^{\Tr}(\Ac)=\Subquot^{\Tr}_0(\Ac)$
\item $\Subquot^{\Tr}(\Ac_1)=\Subquot^{\Tr}_0(\Ac_1)$ and $\Subquot^{\Tr}(\Ac_2)=\Subquot^{\Tr}_0(\Ac_2)$.
\end{enumerate}
If in addition $\Ac$ is liminary, then $\Subquot^{\Tr}(\Ac)=\Subquot^{\Hausd}(\Ac)$ if and only if 
$\Subquot^{\Tr}(\Ac_1)=\Subquot^{\Hausd}(\Ac_1)$ and $\Subquot^{\Tr}(\Ac_2)=\Subquot^{\Hausd}(\Ac_2)$

\end{theorem}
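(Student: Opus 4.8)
\textbf{Proof plan for Theorem~\ref{SQ7}.}
The plan is to prove both equivalences by the same mechanism: a controlled boundary extension $0\to\Ac_1\to\Ac\mathop{\to}\limits^p\Ac_2\to0$ forces every Hausdorff locally closed subset $S$ of $X=\widehat\Ac$ to split cleanly along the decomposition $X=X_1\sqcup X_2$, in the sense that $S\cap X_1$ is relatively closed in $S$ (this is exactly Lemma~\ref{SQ5}). Translating back through the bijection $\Psi_\Ac$ of Remark~\ref{S33}, this says that every subquotient $\Qc=\Jc_2/\Jc_1$ of $\Ac$ with Hausdorff spectrum sits in a short exact sequence
$$0\to\Qc\cap\Ac_1\to\Qc\to p(\Qc)\to0$$
where $\Qc\cap\Ac_1$ (the ideal with spectrum $S\cap X_1$) is itself a subquotient of $\Ac_1$ and $p(\Qc)$ (the quotient, with spectrum $S\setminus X_1=S\cap X_2$) is a subquotient of $\Ac_2$; moreover $S\cap X_1$ is both open and closed in $S$, so in fact $\Qc\cong(\Qc\cap\Ac_1)\oplus p(\Qc)$ as a direct sum of $C^*$-algebras, with $\widehat{\Qc\cap\Ac_1}=S\cap X_1$ and $\widehat{p(\Qc)}=S\cap X_2$.

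\textbf{First equivalence.}
For the direction $(i)\Rightarrow(ii)$: by Lemma~\ref{SQ2}, $\Subquot(\Ac_1)\cup\Subquot(\Ac_2)\subseteq\Subquot(\Ac)$, and these inclusions preserve $*$-iso\-mor\-phism type, hence continuous trace and the Dixmier--Douady invariant; so if every continuous-trace subquotient of $\Ac$ has vanishing invariant, the same holds for $\Ac_1$ and $\Ac_2$. For $(ii)\Rightarrow(i)$: let $\Qc$ be a continuous-trace subquotient of $\Ac$. Its spectrum $S$ is Hausdorff, so the splitting above applies and $\Qc\cong(\Qc\cap\Ac_1)\oplus p(\Qc)$. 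Each summand has spectrum a (relatively open, hence locally closed) subset of $S$, hence has continuous trace, and is a subquotient of $\Ac_1$, resp.\ $\Ac_2$; by hypothesis $(ii)$ each summand has vanishing Dixmier--Douady invariant. As recalled just before Conjecture~\ref{SQ0} (via \cite[Th.~10.9.3]{Dix64}), vanishing of the invariant for a continuous-trace $C^*$-algebra is equivalent to being the $C^*$-algebra of a continuous field of nonzero Hilbert spaces, a property preserved under direct sums; hence $\Qc$ has vanishing Dixmier--Douady invariant, proving $(i)$.

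\textbf{Second equivalence (when $\Ac$ is liminary).}
The same splitting handles this: if $\Ac$ is liminary then so are $\Ac_1$, $\Ac_2$ and all their subquotients, and a liminary $C^*$-algebra has continuous trace precisely when its spectrum is Hausdorff together with the automatic local triviality that liminarity supplies on a Hausdorff base (continuity of the trace functional on a Hausdorff spectrum); thus $\Subquot^{\Tr}=\Subquot^{\Hausd}$ for such an algebra amounts to asking that every Hausdorff-spectrum subquotient actually has continuous trace. Given $(ii)$ in this form, let $\Qc$ be a subquotient of $\Ac$ with Hausdorff spectrum; split $\Qc\cong(\Qc\cap\Ac_1)\oplus p(\Qc)$, note both summands have Hausdorff spectra and are subquotients of $\Ac_1$, resp.\ $\Ac_2$, so by hypothesis both have continuous trace, and a finite direct sum of continuous-trace algebras has continuous trace; hence $\Qc$ does. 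The converse direction is again immediate from Lemma~\ref{SQ2}. The main obstacle is the first paragraph: verifying carefully that the relatively-closed splitting of $S$ produced by Lemma~\ref{SQ5} really yields an honest \emph{direct sum} decomposition of the subquotient $\Qc$ (one must check $S\cap X_1$ is not merely relatively closed but relatively clopen in $S$, using that $X_1$ is open in $X$), and that the two pieces are genuinely subquotients of $\Ac_1$ and $\Ac_2$ respectively rather than merely of $\Ac$; everything after that is formal bookkeeping with $\Psi_\Ac$ and the direct-sum stability of the Dixmier--Douady invariant.
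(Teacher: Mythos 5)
Your proposal is correct and follows essentially the same route as the paper's proof: Lemma~\ref{SQ5} (plus openness of $\widehat{\Ac_1}$ in $\widehat{\Ac}$) makes $S\cap X_1$ relatively clopen in $S$, so any Hausdorff-spectrum subquotient splits as a direct sum of two ideals; these summands are identified with subquotients of $\Ac_1$ and $\Ac_2$ via Lemma~\ref{SQ2} and the spectrum bijection $\Psi_{\Ac}$ of Remark~\ref{S33}, and one concludes by the direct-sum stability of continuous trace and of the vanishing of the Dixmier--Douady invariant, while the converse directions follow from Lemma~\ref{SQ2} -- exactly the paper's argument. One aside should be deleted, though: a liminary $C^*$-algebra with Hausdorff spectrum need \emph{not} have continuous trace (Fell's condition is not automatic), so your parenthetical claim of ``automatic local triviality'' is false -- were it true, the second assertion of the theorem would be vacuous for liminary $\Ac$ -- but your operative argument never uses it, since it derives continuous trace of each summand from the hypothesis on $\Ac_1$ and $\Ac_2$.
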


\begin{proof}
The asssertion (i) $\Rightarrow$ (ii) follows from Lemma~\ref{SQ2}.

For (ii) $\Rightarrow$ (i), let $[\Tc] \in\Subquot^{\Tr} (\Ac)$. 
Then 
$S= \widehat{\Tc}$ is a locally  closed and Hausdorff subset of $X$. 
Since $\Tc$ has continuous trace it follows from \cite[Cor.~10.5.6]{Dix64} that $\Tc$ is a 
$C_0(S)$-algebra. 

On the other hand, by Lemma~\ref{SQ5}, the sets $S\cap X_1$ and $S\cap X_2$ are both open and closed in $S$. 
Thus we can write 
\begin{equation}\label{SQ7-1}
 \Tc = \Tc_1 \dotplus \Tc_2
\end{equation} 
where $\Tc_j$ are ideals in $\Tc$, have continuous trace,  and $\widehat{\Tc_j} = S\cap X_j$, $j=1, 2$. 

We also have that $[\Tc_j]\in\Subquot^{\Tr} (\Ac)$: 
Indeed, since $[\Tc] \in\Subquot^{\Tr} (\Ac)$ there are ideals, $\Ic\subset \Jc\subset \Ac$ with
$\Tc = \Jc/\Ic$. 
Let $p\colon \Jc \to \Jc/\Ic$ be the canonical projection. 
Then $\Ic \subset p^{-1}(\Tc_j)$, $j=1, 2$ are ideals in $\Jc$, hence in $\Ac$, and 
$p^{-1}(\Tc_j)/\Ic\simeq \Tc_j$. 

For $j= 1, 2$, the sets  $S \cap X_j$ are locally closed in $X_j$, 
hence there are $ [\Tc_j ']\in  \Subquot^{\Tr}(\Ac_j)= \Subquot^{\Tr}_0(\Ac_j)$, with $\widehat{\Tc_j '} = S\cap X_j$. 
On the other hand by Lemma~\ref{SQ2}, $[\Tc_j ']\in\Subquot^{\Tr} (\Ac)$, $j=1, 2$.  
By Remark~\ref{S33}, we get that $\Tc_j'\simeq \Tc_j$, hence $[\Tc_j]\in \Subquot_0^{\Tr} (\Ac) $, $j=1, 2$. 
It then follows from \eqref{SQ7-1} that $[\Tc]\in \Subquot_0^{\Tr} (\Ac)$. 

The proof of the second assertion is completely similar, using again  \cite[Cor.~10.5.6]{Dix64}.
\end{proof}

\subsection*{New classes of  nilpotent Lie groups  satisfying Conjecture~\ref{SQ0}}

The assertions of the following lemma, in the special case of two-step nilpotent Lie groups, go back to 
\cite[Lemma~3.3]{LiRo96} and  the proof of \cite[Th.~6.3.3]{Ec96}. 

\begin{lemma}\label{allflat}
Let $G$ be a connected simply connected nilpotent Lie group that has only flat orbits. 
For every integer $d\ge 0$ denote by $(\gg^*/G)_d$ the set of coadjoint orbits of dimension $d$. 
Then we have
\begin{enumerate}[(i)]
\item\label{allflat_item1} The set $(\gg^*/G)_d$ is locally closed and Hausdorff, and the corresponding subquotient in $C^*(G)$ has continuous trace. 
\item\label{allflat_item2} If $S\subset \gg^*/G$ is a locally closed  Hausdorff subset, then $S\cap (\gg^*/G)_d$ is closed in the relative 
topology. 
\end{enumerate}
\end{lemma}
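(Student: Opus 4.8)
The plan is to exploit the special structure of groups with only flat orbits, where the coadjoint orbit of $\xi$ through every point is an affine subspace, and to organize the orbit space by orbit dimension. First I would recall that for a nilpotent Lie group with flat orbits, the orbit dimension is controlled by the rank of the alternating form $B_\xi(X,Y)=\langle\xi,[X,Y]\rangle$, so that $(\gg^*/G)_d$ corresponds to the locus where $\dim\gg-\dim\gg(\xi)=d$, i.e.\ the rank of $B_\xi$ equals $d$. Since $\xi\mapsto B_\xi$ is linear, the condition $\opn{rank} B_\xi\ge d$ is Zariski open and $\opn{rank}B_\xi= d$ is locally closed in $\gg^*$; because $\gg(\xi)$ depends only on the orbit and the quotient map $\gg^*\to\gg^*/G$ is open (as cited via \cite[Ch.~1, \S 5, no.~2, Ex.~1]{Bo07}), it follows that $(\gg^*/G)_d$ is locally closed in $\gg^*/G$.

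For the Hausdorff property and continuous trace in part~\eqref{allflat_item1}, I would use the flat-orbit hypothesis more seriously. When all orbits are flat, one can write $\gg=\zg(\xi)\oplus\mathfrak{m}$ locally in a way that trivializes the orbits, and the key classical fact (going back to the two-step case in \cite[Lemma~3.3]{LiRo96} and the proof of \cite[Th.~6.3.3]{Ec96}) is that on a constant-dimension stratum the orbits vary continuously and the stratum is a genuine manifold on which $G$ acts with a nice cross-section, so the relative topology is Hausdorff. Once the stratum is Hausdorff, its corresponding subquotient of $C^*(G)$ is liminary with Hausdorff spectrum, hence has continuous trace by \cite[Cor.~10.5.6]{Dix64} together with the fact that on a flat-orbit stratum the canonical field of elementary $C^*$-algebras has constant fiber dimension (here $\dim\Hc$ is determined by the orbit dimension $d$ alone). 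So I would invoke the flat-orbit structure to get a locally trivial-looking field and then quote \cite[Cor.~10.5.6]{Dix64} (or \cite[Th.~4.5.3]{Dix64}) to conclude continuous trace.

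For part~\eqref{allflat_item2}, the statement is exactly of the shape of Lemma~\ref{SQ5}: I would show that $\gg^*/G$ decomposes, for the purpose of separating the top stratum from the rest, as $X_1\sqcup X_2$ with $X_1=(\gg^*/G)_{d}$ being the set of orbits of dimension $d$ and $X_2$ the union of the strata of dimension $\ne d$ that lie in its closure (one handles a single value of $d$ at a time, treating $(\gg^*/G)_d$ as the open part relative to the union of strata of dimension $\le d$). The hypotheses of Lemma~\ref{SQ5} are: $X_1$ is open, locally compact Hausdorff with separated points (which follows from \eqref{allflat_item1}), and every convergent sequence in $X_1$ whose limit set lies in $X_2$ has limit set with no isolated points. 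The latter is where flatness is essential: if $\Oc_{\xi_k}\to$ some orbit $\Oc$ with strictly smaller dimension, then by flatness $\Oc$ is an affine subspace and the ``direction'' of the degenerating orbits forces the whole affine span of $\Oc$ to appear as limits, so the limit set is itself a union of affine pieces and in particular perfect. Granting this, Lemma~\ref{SQ5} gives that $S\cap X_1$ is relatively closed in any locally closed Hausdorff $S$, which is precisely \eqref{allflat_item2}.

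The main obstacle I anticipate is the no-isolated-points condition for limit sets in part~\eqref{allflat_item2}: one must show that a sequence of flat orbits of dimension $d$ cannot converge to a \emph{single} lower-dimensional orbit, but rather always accumulates on a nondegenerate continuum of lower-dimensional orbits. This is the place where the precise geometry of how $B_{\xi_k}$ degenerates — and how the affine orbits through nearby points move — has to be analyzed; the two-step arguments of \cite{LiRo96} and \cite{Ec96} do this by an explicit computation with the orbit map, and I expect the general flat-orbit case to require a similar but slightly more careful parametrization of the orbits near the stratum boundary, together with the homogeneity/affineness that flatness provides. Everything else reduces to bookkeeping with ranks of linear forms, openness of the quotient map, and the cited structure theorems from \cite{Dix64}.
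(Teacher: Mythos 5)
Your proposal has a genuine gap in each part. In part (i), the inference ``the stratum is Hausdorff, liminary, and the canonical field has constant fiber dimension, hence the subquotient has continuous trace by \cite[Cor.~10.5.6]{Dix64}'' does not work: a liminary $C^*$-algebra with Hausdorff spectrum need not have continuous trace, since Fell's condition (equivalently, continuity of the trace functions) can fail when sequences in the spectrum converge with multiplicity greater than one --- the algebra $\{f\in \Cc([0,1],M_2(\CC))\mid f(0)\ \text{is a scalar multiple of the identity}\}$ is a standard counterexample --- and \cite[Cor.~10.5.6]{Dix64} is a consequence of continuous trace, not a criterion for it. The place where flatness must actually be used, and which your argument skips, is exactly this multiplicity issue: the paper quotes \cite[Prop.~2.8]{Lu90} to see that a convergent sequence of flat orbits in $(\gg^*/G)_d$ converges with multiplicity one, and then \cite[Prop.~3.2]{Lu90} to get $\Tr\pi_{\Oc_j}(f)\to\Tr\pi_{\Oc}(f)$ on a dense $*$-subalgebra, which is what gives continuous trace. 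The Hausdorffness itself is also only gestured at (``a genuine manifold with a nice cross-section''); the paper proves it directly: writing $\Oc_j=\xi_j+\gg(\xi_j)^\perp$ with $\xi_j\to\xi$, the set of all limits of sequences $\theta_j\in\gg(\xi_j)^\perp$ is a linear subspace of dimension at most $d$ containing the $d$-dimensional $\gg(\xi)^\perp$, hence equal to it, so the limit orbit is uniquely determined by the sequence.

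In part (ii) you correctly isolate the crucial point --- that $d$-dimensional flat orbits cannot accumulate, inside a locally closed Hausdorff set $S$, on an isolated lower-dimensional orbit --- but you explicitly leave it unproved (``I expect the general flat-orbit case to require \dots''), so the proposal does not establish (ii). The paper closes precisely this step with a short compactness argument you could adopt: pass to a subsequence so that $V_j=\gg(\xi_j)^\perp$ converges to some $V$ in the Grassmannian $\opn{Gr}(d;\gg^*)$; then $\xi+V$ is $G$-invariant and $(\xi+V)/G$ is contained in the limit set of $\{\Oc_j\}$, so either $\xi+V=\Oc$, in which case $\Oc\in(\gg^*/G)_d$ as desired, or $\Oc$ lies in an infinite connected family $(\xi+V)/G$ of lower-dimensional orbits inside the limit set, which contradicts $S$ being locally closed and Hausdorff by the same two-point argument as in Lemma~\ref{SQ5}. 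Your framing through Lemma~\ref{SQ5} is reasonable in principle (it is how the paper treats the later extensions in Lemma~\ref{SQ11.5}), but without this Grassmannian/affine-limit input the no-isolated-points (or uniqueness-of-limit) condition is not verified, and that is the heart of the lemma; by contrast, your local-closedness argument via ranks of $B_\xi$ and openness of the quotient map is fine and matches the paper's decomposition $(\gg^*/G)_d=(\gg^*/G)_{\le d}\setminus(\gg^*/G)_{\le d-2}$.
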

\begin{proof}
The lemma is trivial for $d=0$, therefore let  $d> 0$.

We first prove \eqref{allflat_item1}. 
Let $(\gg^*/G)_{\le d}$ be the set of all orbits of dimension $\le  d$.
It is a closed subset of $\gg^*/G$, and 
$$ (\gg^*/G)_{d}= (\gg^*/G)_{\le d}\setminus (\gg^*/G)_{\le {d-2}}.$$
This shows that $(\gg^*/G)_{d}$ is locally closed.

Let now $\{\Oc_j\}_{j\in \NN}$ be a convergent sequence of coadjoint orbits
in $(\gg^*/G)_d$, and let $\Oc \in L(\{\Oc_j\})\cap (\gg^*/G)_d$. 
Then there exists $\xi_j \in \Oc_j$ and $\xi\in\Oc$ such that 
$
\lim_{j\to \infty} \xi_j = \xi, 
$
and $\Oc_j = \xi_j + \gg(\xi_j)^{\perp}$ and $\Oc = \xi+ \gg(\xi)^{\perp}$. 
We denote $V_j =\gg(\xi_j)^{\perp}$ and $V= \gg(\xi)^{\perp}$, and note that they do not depend on the choice of $\xi_j \in \Oc_j$ and $\xi \in \Oc$, respectively. 
We denote 
$$V_0=\{\theta\in \gg^*\mid (\exists) \theta_j \in V_j, \; \theta_j \to \theta\}.$$
Then it is easy to see that $V_0$ is a subspace of $\gg^*$ of dimension at most $d$.  
Since  $\Oc_j \to \Oc$ we have that $V\subseteq V_0$. 
Moreover $V$ is $d$-dimensional, hence $V=V_0$. 
This shows that $\Oc$ is uniquely determined by the sequence $\{\Oc_j\}$, hence 
the set $(\gg^*/G)_d$ is Hausdorff. 

Moreover, since all the orbits are flat,  $\Oc_j$ converges to $\Oc$ with multiplicity $1$ (see 
\cite[Prop.~2.8]{Lu90}), hence by \cite[Prop.~3.2]{Lu90} we have that 
$$ \lim_{j\to \infty} \Tr \pi_{\Oc_{j}} (f) = \Tr \pi_{\Oc}(f)$$
for every $f$ is a dense $*$-subalgebra of $C^*(G)$. 
Here $\pi_{\Oc_{j}}$, $\pi_{\Oc}$ are irreducible representations associated to $\Oc_j$ and $\Oc$, respectively. 
 This shows  that the subquotient $\Psi^{-1}_{C^*(G)}((\gg^*/G)_d)$ has continuous trace.

To prove \eqref{allflat_item2} let $\{\Oc_j\}_j$ in $S\cap (\gg^*/G)_d$ 
be a sequence of orbits and $\Oc \in S$ with $\Oc_j \to \Oc$.  
We can write $\Oc_j = \xi_j + V_j$ with $\xi_j\in \Oc_j$ as above. 
We regard $\{V_j\}_j$ as a sequence of points in the compact connected manifold $\opn{Gr}(d; \gg^*)$, which is the connected component of the Grassmann manifold consisting of the $d$-dimensional linear subspaces of $\gg^*$. 
Therefore, by selecting a subsequence,  we may assume that there exists $V\in \opn{Gr}(d; \gg^*)$ with $V_j\to V$
in $\opn{Gr}(d; \gg^*)$. 
The set $\xi +V$ is $G$-invariant, and 
$(\xi+V)/G \subseteq L(\{\Oc_j\})$. 
Hence we either have that $\xi+V=\Oc$, in which case $\Oc \in (\gg^*/G)_d$, or $\Oc\in (\xi+V)/G$ is an infinite connected set of lower-dimensional orbits. 
The later alternative is however impossible, 
since $S$ is locally closed, Hausdorff and contains $\Oc_j$. 
\end{proof}

Note that the lemma is not true for general nilpotent Lie groups, as can can be seen from the case of 
threadlike nilpotent Lie groups; see  \cite[Sect.~5]{Lu90}. 
See however \cite[Th.~5.1]{AKLSS01} for the equivalence Fell point $\Longleftrightarrow$  separated point in the set of coadjoint orbits of maximal dimension of the threadlike groups.

The following corollary provides a class of 3-step nilpotent Lie groups for which 
Conjecture~\ref{SQ0} holds true even in a stronger form.

\begin{corollary}\label{SQ8}
If $G$ is any 3-step nilpotent Lie group with 1-dimensional center and with generic flat coadjoint orbits,  
then its $C^*$-algebra $\Ac=C^*(G)$ has the property $\Subquot^{\Hausd}(\Ac)=\Subquot^{\Tr}(\Ac)=\Subquot^{\Tr}_0(\Ac)$. 
\end{corollary}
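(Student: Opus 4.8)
The plan is to apply Theorem~\ref{SQ7} (together with Lemma~\ref{allflat}) to a suitable short exact sequence that exhibits $C^*(G)$ as a controlled boundary extension. First I would recall that a $3$-step nilpotent Lie group $G$ with $1$-dimensional center $\zg=[\gg,\gg]\cap\zg$ has $[\gg,\gg]\supseteq\zg$, and I would single out the central character by fixing $z\in\zg\setminus\{0\}$ and letting $\lambda=\langle\,\cdot\,,z\rangle$. Set $X_1\subseteq\widehat{G}\simeq\gg^*/G$ to be the set of coadjoint orbits $\Oc_\xi$ with $\langle\xi,z\rangle\ne0$; this is a $G$-invariant Zariski-open, hence dense open, subset of $\gg^*$, whose image in $\gg^*/G$ is open, and $X_2:=(\gg^*/G)\setminus X_1$ is the closed set of orbits on which $z$ acts trivially, i.e.\ the spectrum of $C^*(G/\exp\zg)$. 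This gives the exact sequence
\begin{equation*}
0\to\Ac_1\to\Ac\mathop{\to}\limits^{p}\Ac_2\to0,\qquad \Ac_2=C^*(G/\exp\zg),
\end{equation*}
where $\Ac_1$ is the ideal with $\widehat{\Ac_1}=X_1$.

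Next I would verify that this is a controlled boundary extension in the sense of Definition~\ref{SQ6}, i.e.\ that $X=X_1\sqcup X_2$ satisfies the hypotheses of Lemma~\ref{SQ5}. For (i): the orbits in $X_1$ are the generic ones, which by hypothesis are flat, so $X_1=(\gg^*/G)_d$ for $d=\dim\gg-\ind\gg$ the generic orbit dimension, and Lemma~\ref{allflat}\eqref{allflat_item1} shows $X_1$ is locally closed and Hausdorff; since $X_1$ is moreover open in $X$ and the trace is continuous there, standard arguments (as in the proof of Theorem~\ref{S5}, using that $[\pi]\mapsto\Vert\pi(a)\Vert$ is continuous on $X_1$) show its points are separated in $X$. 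For (ii): a convergent sequence in $X_1$ with limit set in $X_2$ has, by Lemma~\ref{allflat}\eqref{allflat_item2} applied with $S=X_1\cup L(\overline{x})$ — or more directly by the argument in the last paragraph of the proof of Lemma~\ref{allflat} — a limit set of the form $(\xi+V)/G$ for a $d$-dimensional subspace $V$, and if this set meets $X_2$ it is an infinite connected set of lower-dimensional orbits, hence has no isolated points.

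Then I would invoke Theorem~\ref{SQ7}. For the outer terms: $\Ac_1$ has continuous trace (Lemma~\ref{allflat}\eqref{allflat_item1}) with Hausdorff spectrum $X_1\cong$ a Zariski-open subset of a vector space, which is trivial as a fiber bundle, so the Dixmier--Douady invariant of every continuous-trace subquotient of $\Ac_1$ vanishes — concretely, $\Subquot^{\Hausd}(\Ac_1)=\Subquot^{\Tr}(\Ac_1)=\Subquot^{\Tr}_0(\Ac_1)$ because subquotients of $\Ac_1$ with Hausdorff dual already have continuous trace and their spectra are locally closed in a Euclidean-type space. For $\Ac_2=C^*(G/\exp\zg)$: the quotient $G/\exp\zg$ is at most $2$-step nilpotent, so the result for $2$-step groups, \cite[Th.~3.4]{LiRo96}, gives $\Subquot^{\Tr}(\Ac_2)=\Subquot^{\Tr}_0(\Ac_2)$ (and, by the $2$-step case of Lemma~\ref{allflat} together with an induction on dimension exactly as in this corollary, also $\Subquot^{\Hausd}(\Ac_2)=\Subquot^{\Tr}(\Ac_2)$). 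Feeding both into Theorem~\ref{SQ7} — whose second assertion applies since $\Ac=C^*(G)$ is liminary — yields $\Subquot^{\Hausd}(\Ac)=\Subquot^{\Tr}(\Ac)=\Subquot^{\Tr}_0(\Ac)$. The main obstacle I anticipate is the verification of condition (ii) of Lemma~\ref{SQ5}, i.e.\ controlling the limit sets of generic sequences: one must rule out that such a limit set contains an isolated point of $X_2$, which requires the Grassmann-compactness argument of Lemma~\ref{allflat}\eqref{allflat_item2} and the flatness of the generic orbits; everything else is bookkeeping with the exact sequence and the cited results.
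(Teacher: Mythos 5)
Your proposal is correct and follows essentially the same route as the paper: the same short exact sequence $0\to\Ac_1\to C^*(G)\to C^*(G/N)\to 0$ over the one-dimensional center $N$, recognized as a controlled boundary extension, then Theorem~\ref{SQ7} combined with \cite[Th.~3.4]{LiRo96} for the $2$-step quotient and Lemma~\ref{allflat} (applied to $G/N$) for the Hausdorff part. The only caveat is that Lemma~\ref{allflat} as stated assumes \emph{all} coadjoint orbits are flat, so it applies to the $2$-step quotient but not literally to the $3$-step group $G$; for the generic stratum of $G$ one should either argue as the paper does, using $\Ac_1\simeq\Cc_0(\RR\setminus\{0\},\Kc(\Hc))$ from \cite[p.~126]{Ka95}, or observe (as you indicate) that the relevant steps of the proof of Lemma~\ref{allflat} only use flatness of the top-dimensional orbits.
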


\begin{proof}
Let $N$ be the center of $G$. 
Then there exists the short exact sequence of $C^*$-algebras
$$ 0\to\Jc\to C^*(G)\to C^*(G/N)\to 0.$$
Moreover, since $G$ has 1-dimensional center and generic flat coadjoint orbits, 
there exists an isometric $*$-isomorphism $\Jc\simeq\Cc_0(\RR\setminus\{0\},\Kc(\Hc))$ 
for some Hilbert space~$\Hc$  
(see for instance \cite[page 126]{Ka95}), 
hence we obtain a short exact sequence 
$$0\to \Cc_0(\RR\setminus\{0\},\Kc(\Hc))\to C^*(G)\to C^*(G/N)\to0$$
that is a controlled boundary extension 
in the sense of Definition~\ref{SQ6}. 
We clearly have $\Subquot^{\Tr}(\Cc_0(\RR\setminus\{0\},\Kc(\Hc)))=\Subquot^{\Tr}_0(\Cc_0(\RR\setminus\{0\},\Kc(\Hc)))$. 
On the other hand, the group $G/N$ is 2-step nilpotent 
(see for instance \cite[Rem. 2.7]{BB13}) 
hence $\Subquot^{\Tr}(C^*(G/N))=\Subquot^{\Tr}_0(C^*(G/N))$ 
by \cite[Th.~3.4]{LiRo96}. 
Therefore by using Theorem~\ref{SQ7} we obtain that $\Subquot^{\Tr}(\Ac)=\Subquot^{\Tr}_0(\Ac)$.

By Lemma~\ref{allflat} it follows that if $T\subset \widehat{G/N}$ is Hausdorff and connected  then 
its points are coadjoint orbits of constant dimension. 
Moreover the set of all coadjoint orbits of fixed dimension of $G/N$ is locally closed subset whose corresponding subquotient of $C^*(G/N)$ has continuous trace. 
Hence by using Theorem~\ref{SQ7} we obtain that $\Subquot^{\Tr}(\Ac)=\Subquot^{\Hausd}(\Ac)$.
\end{proof}

\begin{remark}\label{SQ9}
\normalfont
By using \cite[Ex. 6.8]{BB13} and \cite[Ex. 3.5]{La05}, 
one can construct an uncountable family of mutually nonisomorphic 3-step nilpotent Lie groups $G$ of dimension~7 
with 1-dimensional center $N$ and generic flat coadjoint orbits, 
hence satisfying the hypothesis of Corollary~\ref{SQ8}.
All these groups are (mutually nonisomorphic) extensions of the $2$-step nilpotent Lie group $G_{6, 15}$ 
discussed in \cite[Ex.~6.3.5]{Ec96}, with  the quotients $G/N$ isomorphic with $G_{6, 15}$. 
\end{remark}

In Corollary~\ref{SQ12} below we provide examples of nilpotent Lie groups of arbitrarily high nilpotency step for which Conjecture~\ref{SQ0} holds true. 

\begin{definition}[\cite{HL79}]
%\cite{Ri87}]
\label{SQ10}
\normalfont
For any $m,n\ge1$ we define $\hg_{m,n}$ as the Lie algebra 
with a basis $\{X_1,\dots,X_m\}\cup\{Y_0,\dots,Y_n\}$ and the bracket given by  
$$[X_i,Y_j]=Y_{i+j}$$
for all $i\in\{1,\dots,m\}$ and $j\in\{0,\dots,n\}$ with $i+j\le n$. 
We denote by $X_j^*$, $j=1,\dots, m$, $Y_k^*$, $k=0, \dots, n$ the corresponding dual basis in 
$\hg_{m, n}^*$.  
\end{definition}

\begin{proposition}\label{SQ11}
The following assertions hold: 
\begin{enumerate}[(i)]
\item\label{SQ11_item1} 
For all $m,n\ge 1$ the Lie algebra $\hg_{m,n}$ is $n$-step nilpotent and its 
center $\zg$ is spanned by $\{Y_n\}$ if $m\le n$ and by $\{Y_n\}\cup\{X_{n+1},\dots,X_m\}$ if $m> n$. 
\item\label{SQ11_item2} 
If $n\ge 2$, there exists a Lie algebra isomorphism $\hg_{m,n}/\RR Y_n\simeq\hg_{m,n-1}$. 
\item\label{SQ11_item3} 
If $m\ge n\ge 1$, then the coadjoint isotropy subalgebra at any $\xi\in\hg_{m,n}^*$ with $\langle\xi,Y_n\rangle\ne0$
is $\zg$. 
Hence the corresponding coadjoint orbit is $\Oc_\xi= \xi+\zg^\perp$.
\item\label{SQ11_item4} 
If $m\ge n>k\ge 1$ then the coadjoint isotropy subalgebra at any $\xi\in\hg_{m,n}^*$ with 
$\langle\xi,Y_j\rangle=0$ if $k< j\le n$ and 
$\langle\xi,Y_k\rangle\ne0$ 
is spanned by $\{Y_k,\dots,Y_n\}\cup\{X_{k+1},\dots,X_m\}$. 
\item\label{SQ11_item5} All coadjoint orbits of $H_{m, n}$ are flat. 
\end{enumerate}
\end{proposition}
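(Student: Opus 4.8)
The plan is to compute everything directly from the structure constants $[X_i,Y_j]=Y_{i+j}$ (where $Y_{i+j}:=0$ if $i+j>n$), noting first that $[\hg_{m,n},\hg_{m,n}]\subseteq\spa\{Y_1,\dots,Y_n\}$ and more precisely the lower central series is $\gg^{(r)}=\spa\{Y_r,\dots,Y_n\}$, which gives \eqref{SQ11_item1}: the series terminates after $n$ steps since $\gg^{(n)}=\RR Y_n\ne\{0\}$ and $\gg^{(n+1)}=\{0\}$, and $Z\in\zg$ iff $[X_i,Z]=0$ for all $i$ and $[X_i,Y_j]=0$ forces (writing $Z=\sum a_iX_i+\sum b_jY_j$) that the $X_i$-components contribute $Y_{\le n}$ terms unless $i>n$, while $Y_j\in\zg$ iff $i+j>n$ for all $i\ge 1$, i.e. $j=n$; the case split $m\le n$ versus $m>n$ records exactly whether the $X$-directions $X_{n+1},\dots,X_m$ survive. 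For \eqref{SQ11_item2}, since $Y_n$ is central the quotient $\hg_{m,n}/\RR Y_n$ inherits a bracket, and one checks the induced images of $X_1,\dots,X_m,Y_0,\dots,Y_{n-1}$ satisfy exactly the relations defining $\hg_{m,n-1}$ (the only relations lost are those producing $Y_n$); this is a one-line verification of structure constants.

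For the coadjoint isotropy computations \eqref{SQ11_item3}--\eqref{SQ11_item4}, I would use that $\gg(\xi)=\{X\in\gg\mid\langle\xi,[X,Y]\rangle=0\ \forall Y\}$, so I need the kernel of the alternating form $B_\xi(X,Y)=\langle\xi,[X,Y]\rangle$. With the basis above, $B_\xi(X_i,Y_j)=\langle\xi,Y_{i+j}\rangle$ and $B_\xi(X_i,X_{i'})=B_\xi(Y_j,Y_{j'})=0$. In case \eqref{SQ11_item3}, with $m\ge n$ and $\langle\xi,Y_n\rangle\ne0$: any $X_i$ with $1\le i\le n$ pairs nontrivially with $Y_{n-i}$ (which exists since $n-i\ge 0$), so $X_i\notin\gg(\xi)$ for $i\le n$; and any $Y_j$ with $j<n$ pairs nontrivially with $X_{n-j}$ (which exists since $1\le n-j\le n\le m$), so $Y_j\notin\gg(\xi)$; hence $\gg(\xi)\subseteq\spa\{X_{n+1},\dots,X_m,Y_n\}=\zg$, and the reverse inclusion is automatic, giving $\gg(\xi)=\zg$ and $\Oc_\xi=\xi+\gg(\xi)^\perp=\xi+\zg^\perp$. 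Case \eqref{SQ11_item4} is the same bookkeeping with the vanishing hypothesis $\langle\xi,Y_j\rangle=0$ for $k<j\le n$ and $\langle\xi,Y_k\rangle\ne0$: now $X_i$ pairs with some $Y_{i+j}$ nontrivially iff one can reach an index $\le k$ with nonzero value, which (using $\langle\xi,Y_k\rangle\ne0$) happens precisely for $1\le i\le k$, so $X_{k+1},\dots,X_m\in\gg(\xi)$; and $Y_j$ for $j<k$ pairs with $X_{k-j}$ (existing since $1\le k-j\le k\le n\le m$), so only $Y_k,\dots,Y_n\in\gg(\xi)$; assembling gives $\gg(\xi)=\spa\{Y_k,\dots,Y_n\}\cup\{X_{k+1},\dots,X_m\}$ as claimed.

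Finally, \eqref{SQ11_item5}: an orbit is flat exactly when $\gg(\xi)^\perp+\xi$ is an affine subspace on which the orbit coincides, equivalently when $[\gg,\gg]\subseteq\gg(\xi)^\perp$, i.e. $\langle\xi,[\gg,\gg(\xi)]\rangle$... more precisely the standard criterion (Pedersen, see also \cite[Prop.~2.8]{Lu90}) is that $\Oc_\xi$ is flat iff $\gg(\xi)$ is an ideal, or equivalently $[\gg,\gg]\cap\gg(\xi)^{\perp\perp}$ behaves affinely; the cleanest route here is to note that in every case above $\gg(\xi)$ turned out to be spanned by basis vectors of the form $\{Y_k,\dots,Y_n\}\cup\{X_{k+1},\dots,X_m\}$ (with $k$ possibly $n$), and such a subspace is an ideal of $\hg_{m,n}$: brackets $[X_i,Y_j]=Y_{i+j}$ with $j\ge k$ land in $\{Y_{k},\dots,Y_n\}\subseteq\gg(\xi)$, and $[X_i,X_{i'}]=0$. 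Since the coadjoint isotropy $\gg(\xi)$ is an ideal for every $\xi$, the orbit through $\xi$ is flat, $\Oc_\xi=\xi+\gg(\xi)^\perp$. I expect the main obstacle to be purely organizational: getting the index ranges in \eqref{SQ11_item4} exactly right (which $X_i$ and which $Y_j$ lie in the kernel of $B_\xi$) under the stated vanishing conditions, and then checking cleanly that the resulting subalgebra is an ideal so that the flatness criterion applies uniformly across \eqref{SQ11_item3}--\eqref{SQ11_item5}.
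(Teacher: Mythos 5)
Your route is genuinely different from the paper's: the paper handles this proposition almost entirely by citation (the Jacobi identity is checked in [Ri87], assertions (i)--(iii) and (v) are extracted as by-products of the proof of [HL79, Lemma 13], and (iv) is then obtained by an inductive argument from (i)--(iii)), whereas you compute everything directly from the structure constants. That is a legitimate and arguably more transparent alternative: items (i), (ii) and the flatness mechanism in (v) (the standard implication ``$\gg(\xi)$ an ideal $\Rightarrow\Oc_\xi=\xi+\gg(\xi)^\perp$'', which is the only direction you need, plus your verification that the computed stabilizers are ideals) are correct as you present them. What the paper's proof buys is brevity; what yours buys is a self-contained verification, including an explicit description of the radical of $B_\xi$ that the paper never writes down.

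The one step that does not stand as written is the passage, in (iii) and (iv), from ``no basis vector $X_i$ with $i\le k$ and no $Y_j$ with $j<k$ lies in $\gg(\xi)$'' to ``$\gg(\xi)\subseteq\spa\bigl(\{Y_k,\dots,Y_n\}\cup\{X_{k+1},\dots,X_m\}\bigr)$'': excluding individual basis vectors from the radical of a skew form does not bound the radical, since a combination of excluded vectors can still be isotropic against everything. Here the triangular structure rescues the claim, and you should make it explicit: write $c_l=\langle\xi,Y_l\rangle$, take a general $Z=\sum_i a_iX_i+\sum_j b_jY_j\in\gg(\xi)$, and pair $Z$ successively with $Y_{k-1},Y_{k-2},\dots,Y_0$ to get $a_1=a_2=\dots=a_k=0$ (each step uses $c_k\ne0$ and $c_l=0$ for $l>k$), then with $X_k,X_{k-1},\dots,X_1$ (these exist because $m\ge n\ge k$) to get $b_0=\dots=b_{k-1}=0$; equivalently, the matrix $\bigl(c_{i+j}\bigr)_{1\le i\le k,\,0\le j\le k-1}$ is anti-triangular with the nonzero entry $c_k$ along its anti-diagonal, hence the pairing between $\spa\{X_1,\dots,X_k\}$ and $\spa\{Y_0,\dots,Y_{k-1}\}$ is nondegenerate. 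With this insertion (iii) and (iv) are complete, and this replaces the paper's induction for (iv). Two smaller points on (v): your case list should also record the stratum $\langle\xi,Y_j\rangle=0$ for all $j\ge1$, where $\gg(\xi)=\gg$ and the orbit is a point, hence trivially flat; and your argument, like (iii)--(iv), uses $m\ge n$ throughout --- this restriction is genuinely needed, since for $m<n$ (e.g.\ $\hg_{1,2}$, the $4$-dimensional threadlike algebra) the generic orbits are not flat, and $m\ge n$ is indeed the only case the paper uses. Finally, a convention remark: since $(\ad X_1)^nY_0=Y_n\ne0$, with the usual counting (Heisenberg $=$ $2$-step) $\hg_{m,n}$ has nilpotency step $n+1$; your lower-central-series computation is right, and the off-by-one sits in the wording of the statement rather than in your argument.
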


\begin{proof} 
The Jacobi identity for the Lie bracket of $\hg_{m,n}$ is checked in \cite{Ri87}. 
Assertions~\eqref{SQ11_item1}--\eqref{SQ11_item3} are by-products of the proof of \cite[Lemma 13]{HL79}.  
Then Assertions~\eqref{SQ11_item4} follows by an inductive reasoning using \eqref{SQ11_item1}--\eqref{SQ11_item3}. 
Assertion~\eqref{SQ11_item5} follows by \cite[Lemma 13]{HL79}.
\end{proof}

Note that for $m\ge n$, 
by using the generic flat coadjoint orbits of $H_{m,n}$ given by Proposition~\ref{SQ11}\eqref{SQ11_item3}, 
we obtain 
a short exact sequence 
\begin{equation}\label{SQ12_proof_eq1}
0\to \Cc_0((\RR\setminus\{0\})\times \RR^{m-n} ,\Kc(\Hc))\to C^*(H_{m,n})\to C^*(H_{m,n-1})\to0 
\end{equation}
for some Hilbert space~$\Hc$
(see \cite[page 126]{Ka95}). 
The next lemma shows that the extension \eqref{SQ12_proof_eq1} is a controlled boundary extension.

\begin{lemma}\label{SQ11.5}
Let $m\ge n$. 
Then 
if $\{\xi_j\}_{j\in \NN}$ is a sequence in $\hg^*_{m, n}$ with 
$\langle \xi_j, Y_n\rangle \ne 0$ for all $j\in \NN$, and 
$\langle \xi_j, Y_n\rangle \to 0$, $j\to \infty$, then 
$L(\{\Oc_{\xi_j}\}_{j\in \NN})$ is contained in $\widehat{H_{m, n-1}}$ and has no isolated points.
\end{lemma}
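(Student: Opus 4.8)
The plan is to combine three facts about $H:=H_{m,n}$ (using $m\ge n$ throughout). First, by Proposition~\ref{SQ11}\eqref{SQ11_item3}, every $\xi\in\hg_{m,n}^*$ with $\langle\xi,Y_n\rangle\ne0$ has coadjoint isotropy equal to the center $\zg$, so $\Oc_\xi=\xi+\zg^\perp$ is an affine subspace of $\hg_{m,n}^*$ whose direction space $\zg^\perp$ is the same for all such $\xi$. Second, $Y_n\in\zg$ is central, so $\langle\,\cdot\,,Y_n\rangle$ is constant on coadjoint orbits and descends to a continuous function on $\hg_{m,n}^*/H$; under the identification $\widehat{H_{m,n-1}}\cong\{\Oc\mid\langle\Oc,Y_n\rangle=0\}\subseteq\widehat{H_{m,n}}$ attached to $H_{m,n-1}=H/\exp(\RR Y_n)$ (the one already used in \eqref{SQ12_proof_eq1}), this zero set is exactly $\widehat{H_{m,n-1}}$. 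Third, the Kirillov orbit map $q\colon\hg_{m,n}^*\to\hg_{m,n}^*/H$ is open (see \cite[Ch. 1, \S 5, no. 2, Ex. 1]{Bo07}) and $\hg_{m,n}^*/H$ is second countable, so $\Oc\in L(\{\Oc_{\xi_j}\})$ holds if and only if, along some subsequence $(j_k)_k$, there are $\eta_k\in\Oc_{\xi_{j_k}}$ with $\eta_k\to\eta$ for some $\eta\in\Oc$.

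The first assertion is then immediate: if $\Oc\in L(\{\Oc_{\xi_j}\})$ and $\eta_k\in\Oc_{\xi_{j_k}}$ with $\eta_k\to\eta\in\Oc$, then $\langle\eta,Y_n\rangle=\lim_k\langle\xi_{j_k},Y_n\rangle=0$ by hypothesis, so $\Oc\in\widehat{H_{m,n-1}}$.

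For the second assertion, the key observation is that $Y_{n-1}^*\in\zg^\perp$: indeed $\zg=\RR Y_n$ if $m=n$ and $\zg=\RR Y_n\oplus\spa\{X_{n+1},\dots,X_m\}$ if $m>n$ by Proposition~\ref{SQ11}\eqref{SQ11_item1}, and $Y_{n-1}^*$ annihilates each of these generators, so for every $t\in\RR$ the translate $\eta+tY_{n-1}^*$ stays in the orbit $\xi_{j_k}+\zg^\perp=\Oc_{\xi_{j_k}}$ whenever $\eta$ does. Given $\Oc\in L(\{\Oc_{\xi_j}\})$, I would choose a subsequence and $\eta_k\in\Oc_{\xi_{j_k}}$ with $\eta_k\to\eta\in\Oc$, and set $\eta_t:=\eta+tY_{n-1}^*$. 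Then $\eta_k+tY_{n-1}^*\in\Oc_{\xi_{j_k}}$ and $\eta_k+tY_{n-1}^*\to\eta_t$, so $\Oc_{\xi_{j_k}}=q(\eta_k+tY_{n-1}^*)\to q(\eta_t)=\Oc_{\eta_t}$ by continuity of $q$, whence $\Oc_{\eta_t}\in L(\{\Oc_{\xi_j}\})$ for every $t\in\RR$. Moreover $\langle\eta_t,Y_n\rangle=0$, so $\Oc_{\eta_t}\in\widehat{H_{m,n-1}}$; and since $m\ge n>n-1$, the element $Y_{n-1}$ lies in the center of $\hg_{m,n-1}$ (Proposition~\ref{SQ11}\eqref{SQ11_item1} applied to $\hg_{m,n-1}$), so $\langle\,\cdot\,,Y_{n-1}\rangle$ is constant on $H_{m,n-1}$-orbits; as $\langle\eta_t,Y_{n-1}\rangle=\langle\eta,Y_{n-1}\rangle+t$ runs over distinct values, the orbits $\Oc_{\eta_t}$ are pairwise distinct, in particular $\Oc_{\eta_t}\ne\Oc=\Oc_{\eta_0}$ for $t\ne0$. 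Finally $\eta_t\to\eta$ in $\hg_{m,n}^*$ as $t\to0$, so $\Oc_{\eta_t}\to\Oc$ by continuity of $q$. Thus every point of $L(\{\Oc_{\xi_j}\})$ is a limit of distinct points of $L(\{\Oc_{\xi_j}\})$, so this set has no isolated points (the case where it is empty, which may occur if the transverse coordinates $\langle\xi_j,X_i\rangle$ with $i>n$ are unbounded, being vacuous).

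I do not expect a serious obstacle here; the point that needs care is the double role of $Y_{n-1}$. The covector $Y_{n-1}^*$ must be tangent to the large orbits $\Oc_{\xi_j}$, which is why $Y_{n-1}^*\in\zg^\perp$ is invoked, while $Y_{n-1}$ must separate orbits inside $\widehat{H_{m,n-1}}$, which is why its centrality in the quotient $\hg_{m,n-1}$ — and not in $\hg_{m,n}$ itself — is invoked. Keeping the identification $\widehat{H_{m,n-1}}\hookrightarrow\widehat{H_{m,n}}$ consistent and handling the sequential description of $L$ through the openness of $q$ are the only other things to watch.
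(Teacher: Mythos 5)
Your proof is correct, and for the crucial ``no isolated points'' step it takes a genuinely different route from the paper. The paper's own argument describes the limit set globally: writing $\xi_j=\xi_j'+\eta_j$ with $\eta_j$ the component along $X_{n+1}^*,\dots,X_m^*$ and $M:=L(\{\eta_j\})$, it identifies $q^{-1}\bigl(L(\{\Oc_{\xi_j}\})\bigr)$ with $\{0\}\times M\times\RR^{2n}$, and then argues that each fiber $A_\eta$ over $\eta\in M$ is a connected, $\Ad^*$-invariant set that cannot be a single orbit (a $2n$-dimensional orbit inside $Y_n^\perp$ is excluded by Proposition~\ref{SQ11}\eqref{SQ11_item3}--\eqref{SQ11_item4}), so each $A_\eta/\Ad^*_{H_{m,n}}$ is connected with more than one point and hence, points being closed, has no isolated points. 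You instead work locally at each limit orbit: using that the approximating orbits are the flat affine sets $\xi_{j_k}+\zg^\perp$ and that $Y_{n-1}^*\in\zg^\perp$, you translate a lifted convergent sequence by $tY_{n-1}^*$ to produce a curve $t\mapsto\Oc_{\eta+tY_{n-1}^*}$ inside the limit set, distinguish its points by the orbit invariant $\langle\cdot,Y_{n-1}\rangle$ (constancy on orbits in $Y_n^\perp$ following from $[\hg_{m,n},Y_{n-1}]\subseteq\RR Y_n$, i.e.\ centrality of $Y_{n-1}$ in the quotient), and let $t\to0$. Your version is more constructive and elementary: it avoids the global computation of $q^{-1}(L)$ and the connectedness-plus-closed-points argument, needs only Proposition~\ref{SQ11}\eqref{SQ11_item1}--\eqref{SQ11_item3} rather than \eqref{SQ11_item4}, and handles the possibly empty limit set explicitly; what it does not give is the paper's explicit disjoint-union description of $L(\{\Oc_{\xi_j}\})$ over $M$, which the paper records as a by-product. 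Both proofs rest on the same two pillars: flatness of the generic orbits and lifting of convergence through the open orbit map.
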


\begin{proof}
Since $\langle \xi_j, Y_n\rangle \ne 0$, by Proposition~\ref{SQ11} we have that 
$$ \Oc_{\xi_j} =\xi_j +\zg^\perp, $$
and the dimension of each $\Oc_{\xi_j}$ is $2n$. 
We write $\xi_j = \xi_j' +\eta_j$, where $\eta_j \in \zg^*/(\RR Y_n^\ast)$, and denote 
$M:= L(\{\eta_j\}_{j\in \NN})\subset \opn{span}\{X^*_{n+1}, \dots, X^*_m\}$.
Then,   
$$ q\colon \hg_{m, n}^* \to  \hg_{m, n}^*/\Ad_{H_{m, n}}^* \simeq \widehat{H_{m, n}}, \quad 
\xi \mapsto q(\xi) = \Oc_\xi,$$ 
then 
$$ q^{-1} (L(\{\Oc_{\xi_j}\}_{j\in \NN}))=\{0\} \times M \times (\hg_{m, n}^*/\zg^*) \simeq 
\{0\} \times M \times \RR^{2n}.$$
On the other hand, for every $\eta\in M$ the set 
$$A_\eta :=\{ \xi\in q^{-1} (L(\{\Oc_{\xi_j}\}_{j\in \NN}))\mid \xi\vert_{\zg} = \eta\}$$
is $\Ad_{H_{m, n}}^*$ invariant, hence it is a reunion of $H_{m, n}$ orbits. 
The set $A_\eta /\Ad_{H_{m, n}}^*$ is connected and  contains more than one point, since otherwise $A_\eta$ would 
be a  $H_{m, n}$ coadjoint orbit  of maximum dimension $2n$ and contained in $Y_n^\perp$, which is imposible
by Proposition~\ref{SQ11}, \eqref{SQ11_item3} and \eqref{SQ11_item4}. 
Therefore we get that 
\begin{equation}\label{SQ11.5-1}
L(\{\Oc_{\xi_j}\}_{j\in \NN})= \bigsqcup\limits_{\eta\in M} A_\eta /\Ad_{H_{m, n}}^*
\end{equation}
has no isolated points. 
\end{proof}

\begin{corollary}\label{SQ12}
If $m\ge n\ge 1$ and $H_{m,n}$ is the connected simply connected Lie group whose Lie algebra is $\hg_{m,n}$, 
then $H_{m,n}$ is $n$-step nilpotent 
and its $C^*$-algebra $\Ac=C^*(H_{m,n})$ has the property 
$\Subquot^{\Hausd}(\Ac)=\Subquot^{\Tr}(\Ac)=\Subquot^{\Tr}_0(\Ac)$. 
\end{corollary}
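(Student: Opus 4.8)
The plan is to prove Corollary~\ref{SQ12} by induction on $n$, using the short exact sequence \eqref{SQ12_proof_eq1} together with Theorem~\ref{SQ7} and Lemma~\ref{SQ11.5}. The base case $n=1$ is immediate: then $\hg_{m,1}$ is $2$-step nilpotent, so $\Ac=C^*(H_{m,1})$ satisfies $\Subquot^{\Tr}(\Ac)=\Subquot^{\Tr}_0(\Ac)$ by \cite[Th.~3.4]{LiRo96}, and moreover every connected Hausdorff subset of $\widehat{H_{m,1}}$ consists of orbits of fixed dimension by Lemma~\ref{allflat} (applicable since all coadjoint orbits of $H_{m,n}$ are flat by Proposition~\ref{SQ11}\eqref{SQ11_item5}), whence $\Subquot^{\Hausd}(\Ac)=\Subquot^{\Tr}(\Ac)$ via Theorem~\ref{SQ7} applied to the filtration by orbit dimension, exactly as in the last paragraph of the proof of Corollary~\ref{SQ8}.

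For the inductive step, assume $n\ge 2$ and that the statement holds for $H_{m,n-1}$ (note $m\ge n>n-1$, so the inductive hypothesis applies). First I would invoke Proposition~\ref{SQ11}\eqref{SQ11_item2} to identify $\hg_{m,n}/\RR Y_n\simeq\hg_{m,n-1}$, and Proposition~\ref{SQ11}\eqref{SQ11_item3} to describe the generic coadjoint orbits $\Oc_\xi=\xi+\zg^\perp$ for $\langle\xi,Y_n\rangle\ne0$; combining these with \cite[page 126]{Ka95} yields the short exact sequence \eqref{SQ12_proof_eq1}. By Lemma~\ref{SQ11.5}, any sequence of generic orbits whose $Y_n$-coordinate tends to $0$ has limit set inside $\widehat{H_{m,n-1}}$ with no isolated points; together with the fact that the open piece $(\RR\setminus\{0\})\times\RR^{m-n}$ is locally compact Hausdorff with separated points (its associated ideal being $\Cc_0$ of that space tensored with $\Kc(\Hc)$), this verifies precisely the hypotheses of Lemma~\ref{SQ5}, so \eqref{SQ12_proof_eq1} is a controlled boundary extension in the sense of Definition~\ref{SQ6}. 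Since $\Subquot^{\Tr}$ and $\Subquot^{\Hausd}$ both coincide with $\Subquot^{\Tr}_0$ for the ideal $\Cc_0((\RR\setminus\{0\})\times\RR^{m-n},\Kc(\Hc))$ (its dual is Hausdorff and its only continuous-trace subquotients are $\Cc_0$ of open subsets tensor $\Kc$, which are trivial) and for the quotient $C^*(H_{m,n-1})$ by the inductive hypothesis, Theorem~\ref{SQ7} gives both $\Subquot^{\Tr}(\Ac)=\Subquot^{\Tr}_0(\Ac)$ and, using that $\Ac=C^*(H_{m,n})$ is liminary, $\Subquot^{\Hausd}(\Ac)=\Subquot^{\Tr}(\Ac)$.

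I would close by noting that $H_{m,n}$ is $n$-step nilpotent by Proposition~\ref{SQ11}\eqref{SQ11_item1}, which gives arbitrarily high nilpotency step as $n\to\infty$, completing the proof.

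The main obstacle is the verification that \eqref{SQ12_proof_eq1} is a controlled boundary extension, i.e.\ checking condition (ii) of Lemma~\ref{SQ5} for the limit sets; this is exactly the content of Lemma~\ref{SQ11.5}, whose proof relies on the delicate orbit-geometry computation that $A_\eta/\Ad^*_{H_{m,n}}$ is connected with more than one point because a single maximal-dimension flat orbit cannot lie inside $Y_n^\perp$ (Proposition~\ref{SQ11}\eqref{SQ11_item3}--\eqref{SQ11_item4}). Everything else is bookkeeping: tracking the ideal structure through the quotient maps $p^{-1}(\cdot)$ as in Lemma~\ref{SQ2}, and applying the already-established machinery of Theorem~\ref{SQ7}.
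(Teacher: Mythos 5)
Your proof follows the paper's argument essentially verbatim: induction on $n$, with the inductive step given by the extension \eqref{SQ12_proof_eq1}, Lemma~\ref{SQ11.5} verifying the controlled-boundary condition of Definition~\ref{SQ6}/Lemma~\ref{SQ5}, and then Theorem~\ref{SQ7} (plus liminarity for the Hausdorff part). The only divergence is the base case, which you treat as a two-step group via \cite[Th.~3.4]{LiRo96} and Lemma~\ref{allflat} while the paper simply declares $H_{m,1}$ abelian and the property clear; your version is if anything more careful, and you also spell out the easy verification for the ideal $\Cc_0((\RR\setminus\{0\})\times\RR^{m-n},\Kc(\Hc))$, which the paper leaves implicit.
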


\begin{proof}
We will prove the assertion by inducion on $n$. 
If $n=1$, then $H_{m,n}$ is abelian, hence the property $\Subquot^{\Hausd}(\Ac)=\Subquot^{\Tr}(\Ac)=\Subquot^{\Tr}_0(\Ac)$ is clear. 

For the induction step  note that we have the short exact sequence
\eqref{SQ12_proof_eq1}
and that Lemma~\ref{SQ11.5} shows it satisfies the conditions in Theorem~6.8. 
Therefore by using Theorem~\ref{SQ7} we obtain $\Subquot^{\Hausd}(C^*(H_{m,n}))=\Subquot^{\Tr}(C^*(H_{m,n}))=\Subquot^{\Tr}_0(C^*(H_{m,n}))$, 
and this concludes the proof. 
\end{proof}

We note here that if $G$ is any nilpotent Lie group for which Conjecture~\ref{SQ0} holds true, 
then the length of the $C^*(G)$, in the sense of Definition~\ref{solv_def}, is less or equal to the cardinal of the coarse stratification of $\gg^*$, which, in general can be much 
smaller than  the cardinal of the fine stratification used in Proposition~\ref{prop3.5}. 
A similar problem for the  (GCT) composition series of $C^*(G)$ has been studied in \cite{Ec96}   
using twisted crossed products.

\appendix

\section{Complements on properly convergent sequences}

Here we discuss some uniqueness properties of the boundary value mappings that occur in Definition~\ref{norcontspec} 
(see Proposition~\ref{PC4} below). 

\begin{definition}\label{PC0}
\normalfont
A sequence $\bar x:=\{x_k\}_{k\in \NN}$ in a topological space $X$ is said to be \emph{properly convergent} 
if its set of cluster points 
$$L(\bar x):=\bigcap_{k\in \NN}\overline{\{x_i\mid i\ge k\}} $$
has the property that for every point $y\in L(\bar x)$  and every subsequence $\{x_{k_j}\}_{j\in\NN}$ 
one has $x_{k_j}\to y$ as $j\to\infty$. 
\end{definition}

\begin{lemma}\label{PC1}
For every $C^*$-algebra $\Ac$ and any closed sets $S_1,S_2\subseteq\widehat{\Ac}$ one has 
$$S_1\subseteq S_2\iff (\forall a\in\Ac) \ \sup_{[\rho]\in S_1}\Vert\rho(a)\Vert\le \sup_{[\rho]\in S_2}\Vert\rho(a)\Vert. $$
\end{lemma}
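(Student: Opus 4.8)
The forward implication is immediate: if $S_1\subseteq S_2$, then for every $a\in\Ac$ the supremum of $\Vert\rho(a)\Vert$ over $[\rho]\in S_1$ is taken over a smaller set, hence is dominated by the supremum over $[\rho]\in S_2$. So the entire content is the reverse implication, and the plan is to prove its contrapositive: if $S_1\not\subseteq S_2$, then there exists $a\in\Ac$ with $\sup_{[\rho]\in S_1}\Vert\rho(a)\Vert>\sup_{[\rho]\in S_2}\Vert\rho(a)\Vert$.

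First I would pick $[\pi]\in S_1\setminus S_2$. Since $S_2$ is closed, Lemma~\ref{L1} gives a closed two-sided ideal $\Jc$ of $\Ac$ with $\widehat{\Jc}=\widehat{\Ac}\setminus S_2$, i.e. $S_2=\{[\rho]\in\widehat{\Ac}\mid\Jc\subseteq\Ker\rho\}$; in particular $\Jc\subseteq\Ker\rho$ for every $[\rho]\in S_2$, while $\Jc\not\subseteq\Ker\pi$ because $[\pi]\notin S_2$. Choose $b\in\Jc$ with $\pi(b)\neq0$; replacing $b$ by $b^*b$ I may assume $b\ge0$ and $\Vert\pi(b)\Vert=:t>0$. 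Then $\Vert\rho(b)\Vert=0$ for all $[\rho]\in S_2$, so $\sup_{[\rho]\in S_2}\Vert\rho(b)\Vert=0$, whereas $\sup_{[\rho]\in S_1}\Vert\rho(b)\Vert\ge\Vert\pi(b)\Vert=t>0$. This already yields the strict inequality, with $a=b$.

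The only point requiring a little care is the assertion that $\Vert\rho(b)\Vert=0$ for every $[\rho]\in S_2$ genuinely follows from $\Jc\subseteq\Ker\rho$ together with $b\in\Jc$ — which it does, since $b\in\Jc\subseteq\Ker\rho$ forces $\rho(b)=0$. I would also note that the existence of $b\in\Jc$ with $\pi(b)\ne0$ is exactly the condition $\Jc\not\subseteq\Ker\pi$, and that restricting $\pi$ to $\Jc$ gives a nonzero representation of $\Jc$, so such a $b$ exists. I do not anticipate a serious obstacle here; the lemma is essentially a repackaging of the bijection between closed ideals and open subsets of the spectrum from Lemma~\ref{L1}, and the argument is short. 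If one prefers a statement symmetric in form, one can also observe that for a single element $a$ and a closed set $S$, $\sup_{[\rho]\in S}\Vert\rho(a)\Vert$ equals the norm of the image of $a$ in $\Ac/\Jc_S$ where $\widehat{\Ac/\Jc_S}=S$; the inequality in the statement then says that the quotient map through $S_1$ factors through the one through $S_2$, which by standard $C^*$-theory is equivalent to $\Ker\Jc_{S_1}\supseteq\Ker\Jc_{S_2}$, i.e. to $S_1\subseteq S_2$. Either route closes the proof.
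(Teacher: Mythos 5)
Your argument is correct. The paper disposes of the converse implication by a single citation to Fell's Lemma~2.1 in \cite{Fe60}, whereas you prove it from scratch: you pass through the bijection of Lemma~\ref{L1} between closed subsets of $\widehat{\Ac}$ and quotients, pick the ideal $\Jc$ with $\widehat{\Ac/\Jc}=S_2$, and exhibit $b\in\Jc$ with $\pi(b)\ne 0$ for some $[\pi]\in S_1\setminus S_2$, which makes the right-hand supremum $0$ and the left-hand one strictly positive. This is essentially the argument that underlies Fell's lemma (closed sets in the Jacobson topology are hulls of ideals, and $\sup_{[\rho]\in S}\Vert\rho(a)\Vert$ is the norm of $a$ in the quotient by the kernel of $S$), so what you gain is a self-contained, elementary proof at the cost of redoing a standard fact the paper prefers to quote; both routes are equally valid. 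One cosmetic remark: in your closing alternative argument the condition should read $\Jc_{S_1}\supseteq\Jc_{S_2}$ for the ideals $\Jc_{S_i}=\bigcap_{[\rho]\in S_i}\Ker\rho$, not ``$\Ker\Jc_{S_1}\supseteq\Ker\Jc_{S_2}$''; with that fixed, the factorization argument is a clean second proof of the same implication.
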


\begin{proof}
``$\Rightarrow$'' is obvious, 
and the converse follows by \cite[Lemma 2.1]{Fe60}. 
\end{proof}

\begin{lemma}\label{PC2}
Let $\Ac$ be any $C^*$-algebra, $\bar\pi:=\{[\pi_k]\}_{k\in\NN}$ be any properly convergent sequence in $\widehat{\Ac}$. 
Then 
for any closed set $S\subseteq\widehat{\Ac}$ one has 
\begin{eqnarray}
L(\bar\pi)=S
& \iff & 
(\forall a\in\Ac) \ \lim_{k\to\infty}\Vert\pi_k(a)\Vert=\sup_{[\rho]\in S}\Vert\rho(a)\Vert; \nonumber \\
L(\bar\pi)\subseteq S
& \iff & 
(\forall a\in\Ac) \ \lim_{k\to\infty}\Vert\pi_k(a)\Vert\le \sup_{[\rho]\in S}\Vert\rho(a)\Vert; \nonumber \\
L(\bar\pi)\supseteq S
& \iff & 
(\forall a\in\Ac) \ \lim_{k\to\infty}\Vert\pi_k(a)\Vert\ge \sup_{[\rho]\in S}\Vert\rho(a)\Vert. \nonumber 
\end{eqnarray}
\end{lemma}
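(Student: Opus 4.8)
\textbf{Proof proposal for Lemma~\ref{PC2}.}
The plan is to reduce all three equivalences to Lemma~\ref{PC1} by first showing that, for a properly convergent sequence $\bar\pi=\{[\pi_k]\}_{k\in\NN}$, the pointwise limit $\lim_{k\to\infty}\Vert\pi_k(a)\Vert$ exists for every $a\in\Ac$ and equals $\sup_{[\rho]\in L(\bar\pi)}\Vert\rho(a)\Vert$. Once this identity is established, each of the three stated equivalences becomes an instance of Lemma~\ref{PC1} applied to the closed sets $L(\bar\pi)$ and $S$ (in one direction or the other), since the right-hand sides of the three displayed equivalences are exactly the conditions comparing $\sup_{L(\bar\pi)}\Vert\cdot\Vert$ with $\sup_{S}\Vert\cdot\Vert$. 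Note $L(\bar\pi)$ is closed by construction as an intersection of closures, so Lemma~\ref{PC1} applies.

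For the key identity, fix $a\in\Ac$ and write $f(\gamma):=\Vert\rho_\gamma(a)\Vert$ for the (lower semicontinuous, by \cite[3.3.2]{Dix64}) norm function on $\widehat\Ac$, and set $s:=\sup_{[\rho]\in L(\bar\pi)}\Vert\rho(a)\Vert$. First I would prove $\limsup_k f([\pi_k])\le s$: if not, some subsequence $\{[\pi_{k_j}]\}$ satisfies $f([\pi_{k_j}])\to c>s$; after passing to a further subsequence whose set of cluster points is nonempty (using that $\widehat{\Ac}$ is at least $T_0$ and the norm function gives enough separation — more carefully, one uses that a bounded sequence in $\Ac$'s spectrum, viewed through the norm functions, has cluster behavior controlled by \cite[Lemma~2.1]{Fe60} as in the proof of Lemma~\ref{PC1}), the proper convergence of $\bar\pi$ forces any cluster point $[\rho]$ of $\{[\pi_{k_j}]\}$ to lie in $L(\bar\pi)$, and lower semicontinuity of $f$ together with $f([\pi_{k_j}])\to c$ would only give $f([\rho])\le c$, which is not yet a contradiction; so instead one argues directly from \cite[Lemma~2.1]{Fe60} that $c=\lim_j\Vert\pi_{k_j}(a)\Vert\le\sup_{[\rho]\in L(\bar\pi)}\Vert\rho(a)\Vert=s$, contradiction. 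For the reverse inequality $\liminf_k f([\pi_k])\ge s$: pick $[\rho]\in L(\bar\pi)$ with $\Vert\rho(a)\Vert$ close to $s$; since $[\rho]$ is a cluster point and $\bar\pi$ is properly convergent, \emph{every} subsequence converges to $[\rho]$, so by lower semicontinuity $\liminf_k f([\pi_k])\ge f([\rho])$, and taking the supremum over such $[\rho]$ gives $\liminf_k f([\pi_k])\ge s$. Combining the two bounds yields $\lim_k f([\pi_k])=s$.

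With the identity $\lim_{k\to\infty}\Vert\pi_k(a)\Vert=\sup_{[\rho]\in L(\bar\pi)}\Vert\rho(a)\Vert$ in hand for all $a\in\Ac$, the three equivalences follow immediately: for the first, $L(\bar\pi)=S$ is equivalent by Lemma~\ref{PC1} (used twice, for $\subseteq$ and $\supseteq$) to $\sup_{L(\bar\pi)}\Vert\cdot\Vert=\sup_S\Vert\cdot\Vert$ on all of $\Ac$, which by the identity is $\lim_k\Vert\pi_k(a)\Vert=\sup_{[\rho]\in S}\Vert\rho(a)\Vert$; the second and third are the one-directional halves.

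\textbf{Main obstacle.} The delicate point is the argument that $\limsup_k \Vert\pi_k(a)\Vert\le\sup_{[\rho]\in L(\bar\pi)}\Vert\rho(a)\Vert$: one must rule out a subsequence along which the norms escape above the supremum over the cluster set. The cleanest route is to invoke \cite[Lemma~2.1]{Fe60} directly — it characterizes, for a point $[\rho]$ and a set $S$ of irreducible representations, when $[\rho]$ is "weakly contained" in $S$ in terms of the norm inequality — and to apply it to each subsequential limit. I expect that properly establishing that cluster points exist and are captured by $L(\bar\pi)$, and that no "mass" of the norms leaks away, will be the step requiring the most care; everything after the key identity is a formal consequence of Lemma~\ref{PC1}.
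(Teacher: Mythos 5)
Your overall plan coincides with the paper's: everything is reduced to the single identity $\lim_{k\to\infty}\Vert\pi_k(a)\Vert=\sup_{[\rho]\in L(\bar\pi)}\Vert\rho(a)\Vert$ for a properly convergent sequence, after which the three equivalences are formal consequences of Lemma~\ref{PC1} applied to the closed sets $L(\bar\pi)$ and $S$; the paper argues exactly this way, except that it obtains the identity by citing \cite[Th.~2.1]{Fe60} outright. Your $\liminf$ half is fine (proper convergence gives $[\pi_k]\to[\rho]$ for every $[\rho]\in L(\bar\pi)$, and lower semicontinuity of $[\pi]\mapsto\Vert\pi(a)\Vert$, e.g.\ \cite[3.3.2]{Dix64}, gives $\liminf_k\Vert\pi_k(a)\Vert\ge\Vert\rho(a)\Vert$), and so is the reduction of the three equivalences to Lemma~\ref{PC1}.

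The genuine gap is exactly where you flag it: the inequality $\limsup_k\Vert\pi_k(a)\Vert\le\sup_{[\rho]\in L(\bar\pi)}\Vert\rho(a)\Vert$ is asserted rather than proved, and the justification offered (``argue directly from \cite[Lemma~2.1]{Fe60}'') cannot work as stated. That lemma compares closed subsets of $\widehat{\Ac}$ through the suprema of the norm functions (weak containment); to bound $\Vert\pi_{k_j}(a)\Vert$ by $\sup_{[\rho]\in L(\bar\pi)}\Vert\rho(a)\Vert$ with it you would need the $[\pi_{k_j}]$ to be weakly contained in $L(\bar\pi)$, which is false in general: for the Heisenberg group take the infinite-dimensional representations $\pi_k$ with central parameter $t_k\to0$, so that $L(\bar\pi)$ consists of characters; for suitable $a$ in the commutator ideal one has $\Vert\pi_k(a)\Vert>0$ for every $k$ while $\sup_{[\rho]\in L(\bar\pi)}\Vert\rho(a)\Vert=0$. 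Only the limit satisfies the inequality, and that is precisely the nontrivial content of \cite[Th.~2.1]{Fe60} which the paper simply invokes. So either cite that theorem, or close the gap directly: set $\Jc_\infty:=\overline{\bigcup_k\bigcap_{i\ge k}\Ker\pi_i}$; by the definition of the Jacobson topology the hull of $\Jc_\infty$ is exactly $L(\bar\pi)$, hence $\sup_{[\rho]\in L(\bar\pi)}\Vert\rho(a)\Vert=\Vert a+\Jc_\infty\Vert_{\Ac/\Jc_\infty}$, while for every $b\in\bigcap_{i\ge k}\Ker\pi_i$ one has $\Vert\pi_i(a)\Vert=\Vert\pi_i(a+b)\Vert\le\Vert a+b\Vert$ for $i\ge k$, which yields $\limsup_k\Vert\pi_k(a)\Vert\le\Vert a+\Jc_\infty\Vert$ (note this half needs no proper convergence at all). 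With that inserted, your argument is complete and is essentially the paper's proof.
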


\begin{proof} 
We prove only the first equivalence in the statement, 
as this proof contains the arguments needed for the second and third equivalences. 

``$\Rightarrow$'' Use \cite[Th. 2.1]{Fe60}.

``$\Leftarrow$'' 
Let $S\subseteq\widehat{\Ac}$ be any closed set with the property 
$$(\forall a\in\Ac) \quad \lim_{k\to\infty}\Vert\pi_k(a)\Vert=\sup_{[\rho]\in S}\Vert\rho(a)\Vert.$$
Since $\bar\pi:=\{[\pi_k]\}_{k\in\NN}$ is a properly convergent sequence, 
it follows by ``$\Rightarrow$'' that 
$$(\forall a\in\Ac) \quad \lim_{k\to\infty}\Vert\pi_k(a)\Vert=\sup_{[\rho]\in L(\bar\pi)}\Vert\rho(a)\Vert$$
hence 
$$(\forall a\in\Ac) \quad \sup_{[\rho]\in S}\Vert\rho(a)\Vert=\sup_{[\rho]\in L(\bar\pi)}\Vert\rho(a)\Vert.$$
Then, as both $L(\bar\pi)$ and $S$ are closed subsets of $\widehat{\Ac}$, 
it follows by Lemma~\ref{PC0} that $L(\bar\pi)\subseteq S$ and $L(\bar\pi)\supseteq S$, 
hence $L(\bar\pi)=S$, and this completes the proof. 
\end{proof}

\begin{lemma}\label{PC3}
Let $\Ac$ be any $C^*$-algebra, and $\bar\pi:=\{[\pi_k]\}_{k\in\NN}$ and $\bar\tau:=\{[\tau_k]\}_{k\in\NN}$ 
be any properly convergent sequences in $\widehat{\Ac}$. 
Then one has 
\begin{equation}\label{PC3_eq1}
L(\bar\pi)\supseteq L(\bar\tau)\iff (\forall a\in\Ac)\ 
\lim_{k\to\infty}\bigl(\Vert\pi_k(a)\Vert-\Vert\tau_k(a)\Vert\bigr)\ge 0
\end{equation}
and 
\begin{equation}\label{PC3_eq2}
L(\bar\pi)= L(\bar\tau)\iff (\forall a\in\Ac)\ 
\lim_{k\to\infty}\bigl\vert\Vert\pi_k(a)\Vert-\Vert\tau_k(a)\Vert\bigr\vert= 0.
\end{equation}
\end{lemma}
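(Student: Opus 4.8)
The plan is to reduce both equivalences to Lemma~\ref{PC1} by first converting the limits of operator norms into suprema over the cluster sets. The starting observation is that the cluster sets $L(\bar\pi)$ and $L(\bar\tau)$ are closed, being intersections of closed sets (Definition~\ref{PC0}), and that $\bar\pi,\bar\tau$ are properly convergent by hypothesis. Hence the ``$\Rightarrow$'' implication of Lemma~\ref{PC2}, applied once with $S=L(\bar\pi)$ and once with $S=L(\bar\tau)$, yields that for every $a\in\Ac$ the two limits exist and
\[
\lim_{k\to\infty}\Vert\pi_k(a)\Vert=\sup_{[\rho]\in L(\bar\pi)}\Vert\rho(a)\Vert
\quad\text{and}\quad
\lim_{k\to\infty}\Vert\tau_k(a)\Vert=\sup_{[\rho]\in L(\bar\tau)}\Vert\rho(a)\Vert .
\]
In particular $\lim_{k\to\infty}\bigl(\Vert\pi_k(a)\Vert-\Vert\tau_k(a)\Vert\bigr)$ exists for every $a$.

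For \eqref{PC3_eq1} I would then argue as follows. By the displayed identities, the condition $\lim_{k\to\infty}\bigl(\Vert\pi_k(a)\Vert-\Vert\tau_k(a)\Vert\bigr)\ge 0$ for all $a$ is equivalent to $\sup_{[\rho]\in L(\bar\tau)}\Vert\rho(a)\Vert\le\sup_{[\rho]\in L(\bar\pi)}\Vert\rho(a)\Vert$ for all $a$, which by Lemma~\ref{PC1} (with $S_1=L(\bar\tau)$, $S_2=L(\bar\pi)$) is exactly $L(\bar\tau)\subseteq L(\bar\pi)$, i.e.\ $L(\bar\pi)\supseteq L(\bar\tau)$. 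This settles the first equivalence.

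For \eqref{PC3_eq2} I would apply \eqref{PC3_eq1} twice, once as stated and once with the roles of $\bar\pi$ and $\bar\tau$ interchanged, to obtain that $L(\bar\pi)=L(\bar\tau)$ holds if and only if both $\lim_{k\to\infty}\bigl(\Vert\pi_k(a)\Vert-\Vert\tau_k(a)\Vert\bigr)\ge 0$ and $\lim_{k\to\infty}\bigl(\Vert\tau_k(a)\Vert-\Vert\pi_k(a)\Vert\bigr)\ge 0$ for all $a$. Since both limits exist and are negatives of one another, this conjunction is equivalent to $\lim_{k\to\infty}\bigl(\Vert\pi_k(a)\Vert-\Vert\tau_k(a)\Vert\bigr)=0$ for all $a$, that is, to $\lim_{k\to\infty}\bigl\vert\,\Vert\pi_k(a)\Vert-\Vert\tau_k(a)\Vert\,\bigr\vert=0$ for all $a$.

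There is no serious obstacle here beyond bookkeeping; the one point that must not be skipped is the use of proper convergence of \emph{both} sequences, which is precisely what licenses the appeal to the ``$\Rightarrow$'' direction of Lemma~\ref{PC2} to replace each norm-limit by a supremum over the closed cluster set. Once that substitution is in place, Lemma~\ref{PC1} does all the remaining work.
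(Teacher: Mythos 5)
Your proof is correct and follows essentially the paper's route: both arguments hinge on replacing the norm-limits by suprema over the closed cluster sets via the ``$\Rightarrow$'' direction of the first equivalence in Lemma~\ref{PC2}, and then comparing the two closed sets through their norm suprema. The only cosmetic difference is that you invoke Lemma~\ref{PC1} directly and deduce \eqref{PC3_eq2} from \eqref{PC3_eq1} by symmetry, whereas the paper packages the same comparison as the third equivalence of Lemma~\ref{PC2} and proves the converse of \eqref{PC3_eq2} ``similarly.''
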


\begin{proof}
It follows by Lemma~\ref{PC2} that 
$$(\forall a\in\Ac)\quad \lim_{k\to\infty}\Vert\pi_k(a)\Vert=\sup_{\rho\in L(\bar\pi)}\Vert\rho(a)\Vert$$
and 
$$(\forall a\in\Ac)\quad \lim_{k\to\infty}\Vert\tau_k(a)\Vert=\sup_{\rho\in L(\bar\tau)}\Vert\rho(a)\Vert$$
and then the implications ``$\Rightarrow$'' in both \eqref{PC3_eq1} and \eqref{PC3_eq2} follow at once. 
For \eqref{PC3_eq1}  one also needs the elementary remark that if $\{t_k\}_{k\in\NN}$ and $\{s_k\}_{k\in\NN}$ are any convergent sequences of real numbers, then 
$$\lim_{k\to\infty}t_k=\lim_{k\to\infty}s_k\iff \lim_{k\to\infty}\vert t_k-s_k\vert=0.$$
Furthermore, if the condition in the right-hand side of \eqref{PC3_eq1} is satisfied, 
it follows by the above displayed equalities that 
$$(\forall a\in\Ac)\quad \lim_{k\to\infty}\bigl\vert\Vert\pi_k(a)\Vert 
\ge \sup_{[\rho]\in L(\bar\tau)}\Vert\rho(a)\Vert $$
and then $L(\bar\pi)\supseteq  L(\bar\tau)$ by the last equivalence in Lemma~\ref{PC2}. 
This completes the proof of \eqref{PC3_eq1}. 
The implication ``$\Leftarrow$'' in \eqref{PC3_eq2} can be proved similarly. 
\end{proof}

Now we can establish a kind of uniqueness property of some mappings that occur in Definition~\ref{norcontspec}, 
which also shows how these mappings depend on the limit set of the properly convergent sequence to which they are associated. 
It is worth pointing out that no linearity properties of these mappings are needed in the following proposition.

\begin{proposition}\label{PC4}
In the setting of Definition~\ref{norcontspec}, fix $i\in\{0,\dots,d+1\}$. 
Let $\bar\pi:=\{[\pi_k]\}_{k\in\NN}$ and $\bar\tau:=\{[\tau_k]\}_{k\in\NN}$ 
be any properly convergent sequences contained in $\Gamma_i$ with $L(\bar\pi)\cup L(\bar\tau)\subseteq S_i$. 
Assume that for every $ k\in\NN $  one has some mappings $ \tilde\sigma_{\bar\pi,k}: CB(S_{i})\to \Bc(\H_i)$ 
and $ \tilde\sigma_{\bar\tau,k}: CB(S_{i})\to \Bc(\H_i)$ such that  
$$
\lim_{k\to\infty}\norm{\Fc (a)(\pi_k)-\tilde\sigma_{\bar\pi,k} (\Fc (a)\vert_{S_{i}})}_{\Bc(\Hc_i)}
=\lim_{k\to\infty}\norm{\Fc (a)(\tau_k)-\tilde\sigma_{\bar\tau,k} (\Fc (a)\vert_{S_{i}})}_{\Bc(\Hc_i)}=0
$$
for every $a\in\Ac$. 
Then the following assertions hold: 
\begin{enumerate}
\item If $L(\bar\pi)\supseteq L(\bar\tau)$, then 
$$(\forall a\in\Ac)\quad 
\lim_{k\to\infty}\bigl(\Vert\tilde\sigma_{\bar\pi,k} (\Fc (a)\vert_{S_i})\Vert_{\Bc(\Hc_i)}
-\Vert\tilde\sigma_{\bar\tau,k} (\Fc (a)\vert_{S_i})\Vert_{\Bc(\Hc_i)}\bigr)\ge 0.$$
\item If $L(\bar\pi)= L(\bar\tau)$, then 
$$(\forall a\in\Ac)\quad 
\lim_{k\to\infty}\bigl\vert\Vert\tilde\sigma_{\bar\pi,k} (\Fc (a)\vert_{S_i})\Vert_{\Bc(\Hc_i)}
-\Vert\tilde\sigma_{\bar\tau,k} (\Fc (a)\vert_{S_i})\Vert_{\Bc(\Hc_i)}\bigr\vert= 0.$$
\end{enumerate}
\end{proposition}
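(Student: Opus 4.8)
The plan is to reduce everything to the norm-comparison criteria for limit sets of properly convergent sequences that have just been established in Lemma~\ref{PC3}, applied to the $C^*$-algebra $\Ac$ itself. The key observation is that the defining property of the mappings $\tilde\sigma_{\bar\pi,k}$ and $\tilde\sigma_{\bar\tau,k}$ lets us replace, in the limit, the operator norms $\Vert\tilde\sigma_{\bar\pi,k}(\Fc(a)\vert_{S_i})\Vert_{\Bc(\Hc_i)}$ by $\Vert\Fc(a)(\pi_k)\Vert_{\Bc(\Hc_i)}=\Vert\pi_k(a)\Vert$, and similarly for $\bar\tau$. So the statements become assertions purely about $\lim_k(\Vert\pi_k(a)\Vert-\Vert\tau_k(a)\Vert)$, which are exactly governed by~\eqref{PC3_eq1} and~\eqref{PC3_eq2}.

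First I would record the elementary fact that if $(u_k)_{k\in\NN}$ and $(v_k)_{k\in\NN}$ are sequences of non-negative reals with $\lim_k|u_k-v_k|=0$, and one of them, say $(v_k)$, is bounded (here $v_k=\Vert\Fc(a)(\pi_k)\Vert\le\Vert a\Vert$), then $\limsup_k u_k<\infty$ and $\lim_k u_k$ exists iff $\lim_k v_k$ exists, in which case the two limits coincide. Applying this with $u_k=\Vert\tilde\sigma_{\bar\pi,k}(\Fc(a)\vert_{S_i})\Vert_{\Bc(\Hc_i)}$ and $v_k=\Vert\pi_k(a)\Vert$, and using the reverse triangle inequality together with the hypothesis $\lim_k\Vert\Fc(a)(\pi_k)-\tilde\sigma_{\bar\pi,k}(\Fc(a)\vert_{S_i})\Vert_{\Bc(\Hc_i)}=0$, gives
$$\lim_{k\to\infty}\bigl\vert\,\Vert\tilde\sigma_{\bar\pi,k}(\Fc(a)\vert_{S_i})\Vert_{\Bc(\Hc_i)}-\Vert\pi_k(a)\Vert\,\bigr\vert=0$$
for every $a\in\Ac$, and symmetrically with $\bar\tau$ in place of $\bar\pi$.

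Then the two assertions follow by a short triangle-inequality manipulation. For (1), writing $A_k:=\Vert\tilde\sigma_{\bar\pi,k}(\Fc(a)\vert_{S_i})\Vert_{\Bc(\Hc_i)}$, $B_k:=\Vert\tilde\sigma_{\bar\tau,k}(\Fc(a)\vert_{S_i})\Vert_{\Bc(\Hc_i)}$, $a_k:=\Vert\pi_k(a)\Vert$, $b_k:=\Vert\tau_k(a)\Vert$, we have $A_k-B_k=(A_k-a_k)+(a_k-b_k)+(b_k-B_k)$, where the first and third terms tend to $0$ by the previous paragraph; and since $L(\bar\pi)\supseteq L(\bar\tau)$, Lemma~\ref{PC3} (equivalence~\eqref{PC3_eq1}) gives $\liminf_k(a_k-b_k)\ge 0$, so $\liminf_k(A_k-B_k)\ge 0$. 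Because all these limits actually exist (the sequences $a_k,b_k$ converge by Lemma~\ref{PC2}, and $A_k,B_k$ converge by the matching result above, using that $\Ac$ here may be taken to contain $a^*a$ so the arguments of Lemma~\ref{PC2} apply), $\liminf$ may be replaced by $\lim$, which is assertion~(1). Assertion~(2) is identical, invoking~\eqref{PC3_eq2} in place of~\eqref{PC3_eq1} to get $\lim_k|a_k-b_k|=0$ and hence $\lim_k|A_k-B_k|=0$.

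The only genuinely delicate point is the existence of the limits $\lim_k A_k$ and $\lim_k B_k$: a priori the hypotheses only control $\Vert\Fc(a)(\pi_k)-\tilde\sigma_{\bar\pi,k}(\Fc(a)\vert_{S_i})\Vert$, not $\tilde\sigma_{\bar\pi,k}(\Fc(a)\vert_{S_i})$ itself, and $\tilde\sigma_{\bar\pi,k}$ is merely an involutive linear map bounded by $C\Vert\cdot\Vert_{S_i}$, so one cannot assume $A_k$ converges by fiat. This is resolved precisely by the sandwiching $|A_k-a_k|\to 0$ together with the convergence of $a_k=\Vert\pi_k(a)\Vert$ guaranteed by Lemma~\ref{PC2} (applied to the properly convergent sequence $\bar\pi$ with closed limit set $L(\bar\pi)\subseteq S_i\subseteq\widehat\Ac$); so no linearity of $\tilde\sigma$ is needed, in keeping with the remark preceding the proposition. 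I would state this convergence step explicitly as the one substantive lemma-invocation, and leave the triangle-inequality bookkeeping to the reader.
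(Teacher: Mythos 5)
Your proof is correct and follows essentially the same route as the paper: rewrite the hypothesis as $\Vert\pi_k(a)-\tilde\sigma_{\bar\pi,k}(\Fc(a)\vert_{S_i})\Vert\to0$, use the reverse triangle inequality to conclude $\bigl\vert\Vert\pi_k(a)\Vert-\Vert\tilde\sigma_{\bar\pi,k}(\Fc(a)\vert_{S_i})\Vert\bigr\vert\to0$ (and likewise for $\bar\tau$), and then invoke Lemma~\ref{PC3}. Your extra care about the existence of the limits via Lemma~\ref{PC2} is a harmless (and welcome) elaboration of what the paper leaves implicit; the aside about $a^*a$ is unnecessary since $\Ac$ is a $C^*$-algebra, but it does not affect the argument.
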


\begin{proof}
The hypothesis can be written as 
$$
\lim_{k\to\infty}\norm{\pi_k(a)-\tilde\sigma_{\bar\pi,k} (\Fc (a)\vert_{S_{i}})}_{\Bc(\Hc_i)}
=\lim_{k\to\infty}\norm{\tau_k(a)-\tilde\sigma_{\bar\tau,k} (\Fc (a)\vert_{S_{i}})}_{\Bc(\Hc_i)}=0
$$
and this implies 
$$
\lim_{k\to\infty}\bigl\vert\Vert\pi_k(a)\Vert_{\Bc(\Hc_i)}
-\Vert\tilde\sigma_{\bar\pi,k} (\Fc (a)\vert_{S_{i}})\Vert_{\Bc(\Hc_i)}\bigr\vert
=0
$$
and 
$$\lim_{k\to\infty}\bigl\vert\Vert\tau_k(a)\Vert_{\Bc(\Hc_i)}
-\Vert\tilde\sigma_{\bar\tau,k} (\Fc (a)\vert_{S_{i}})\Vert_{\Bc(\Hc_i)}\bigr\vert=0. $$
Now the assertions follow by Lemma~\ref{PC3}. 
\end{proof}

\end{document}